\documentclass[preprint,12pt,authoryear]{elsarticle}

\usepackage{amssymb,amsthm,amsmath,amsfonts,mathrsfs}
\usepackage{bm}
\usepackage{cancel}
\usepackage{enumerate}
\usepackage{url}
\usepackage{chessboard}
\usepackage{pst-all}
\usepackage{hyperref}
\usepackage{wasysym}
\usepackage{mathtools}
\usepackage{graphicx}
\usepackage{contour}
\usepackage{CJKutf8}
\usepackage{bbm}

\newcommand{\customTiny}{\scalebox{0.23}{
\begin{tikzpicture}
\clip(4.83,0.365) rectangle (5.92,1.452);
\fill[line width=2.pt,fill=black,fill opacity=1.0] (5.212307692307691,0.3709401709401755) --
(5.227098504388038,0.4109165222391566) -- (5.244356740184811,0.4626804457451724) --
(5.260695509797721,0.5186561018862608) -- (5.273760554573898,0.5705732327687449) --
(5.2837430321406975,0.616781456772672) -- (5.293211594495013,0.6690752465306267) --
(5.30077046042753,0.7213397005973032) -- (5.305581482521969,0.7643279442575954) --
(5.308600200833782,0.7997323673195916) -- (5.310920237599428,0.8381937289079981) --
(5.312211337771162,0.8750972065724719) -- (5.312592592592592,0.9088319088319139) --
(5.312211337771162,0.9425666110913559) -- (5.310920237599428,0.9794700887558296) --
(5.308600200833782,1.017931450344236) -- (5.305581482521969,1.0533358734062324) --
(5.30077046042753,1.0963241170665246) -- (5.293211594495013,1.1485885711332011) --
(5.2837430321406975,1.2008823608911556) -- (5.273760554573898,1.2470905848950828) --
(5.260695509797721,1.299007715777567) -- (5.244356740184811,1.3549833719186553) --
(5.227098504388038,1.4067472954246711) -- (5.212307692307691,1.4467236467236524) --
(5.531396011396009,1.4467236467236524) -- (5.516605199315662,1.4067472954246711) --
(5.499346963518889,1.3549833719186553) -- (5.483008193905979,1.299007715777567) --
(5.469943149129802,1.2470905848950828) -- (5.459960671563002,1.2008823608911556) --
(5.450492109208687,1.1485885711332011) -- (5.44293324327617,1.0963241170665246) --
(5.438122221181731,1.0533358734062324) -- (5.435103502869918,1.017931450344236) --
(5.432783466104271,0.9794700887558296) -- (5.431492365932538,0.9425666110913559) --
(5.4311111111111074,0.9088319088319139) -- (5.431492365932538,0.8750972065724719) --
(5.432783466104271,0.8381937289079981) -- (5.435103502869918,0.7997323673195916) --
(5.438122221181731,0.7643279442575954) -- (5.44293324327617,0.7213397005973032) --
(5.450492109208687,0.6690752465306267) -- (5.459960671563002,0.616781456772672) --
(5.469943149129802,0.5705732327687449) -- (5.483008193905979,0.5186561018862608) --
(5.499346963518889,0.4626804457451724) -- (5.516605199315662,0.4109165222391566) --
(5.531396011396009,0.3709401709401755) -- cycle;
\fill[line width=2.pt,fill=black,fill opacity=1.0] (5.909743589743589,0.7492877492877543) --
(5.8697672384446085,0.7640785613681009) -- (5.818003314938593,0.7813367971648741) --
(5.7620276587975034,0.7976755667777837) -- (5.7101105279150195,0.8107406115539613) --
(5.663902303911093,0.8207230891207606) -- (5.611608514153138,0.8301916514750758) --
(5.559344060086461,0.837750517407593) -- (5.516355816426169,0.8425615395020323) --
(5.480951393364173,0.8455802578138449) -- (5.442490031775766,0.8479002945794916) --
(5.405586554111292,0.8491913947512248) -- (5.371851851851851,0.8495726495726554) --
(5.338117149592409,0.8491913947512248) -- (5.301213671927935,0.8479002945794916) --
(5.262752310339529,0.8455802578138449) -- (5.227347887277532,0.8425615395020323) --
(5.18435964361724,0.837750517407593) -- (5.132095189550563,0.8301916514750758) --
(5.0798013997926095,0.8207230891207606) -- (5.033593175788682,0.8107406115539613) --
(4.981676044906198,0.7976755667777837) -- (4.92570038876511,0.7813367971648741) --
(4.873936465259094,0.7640785613681009) -- (4.833960113960112,0.7492877492877543) --
(4.833960113960112,1.0683760683760717) -- (4.873936465259094,1.053585256295725) --
(4.92570038876511,1.0363270204989519) -- (4.981676044906198,1.0199882508860423) --
(5.033593175788682,1.0069232061098647) -- (5.0798013997926095,0.9969407285430654) --
(5.132095189550563,0.9874721661887502) -- (5.18435964361724,0.979913300256233) --
(5.227347887277532,0.9751022781617937) -- (5.262752310339529,0.9720835598499811) --
(5.301213671927935,0.9697635230843344) -- (5.338117149592409,0.9684724229126012) --
(5.371851851851851,0.9680911680911706) -- (5.405586554111292,0.9684724229126012) --
(5.442490031775766,0.9697635230843344) -- (5.480951393364173,0.9720835598499811) --
(5.516355816426169,0.9751022781617937) -- (5.559344060086461,0.979913300256233) --
(5.611608514153138,0.9874721661887502) -- (5.663902303911093,0.9969407285430654) --
(5.7101105279150195,1.0069232061098647) -- (5.7620276587975034,1.0199882508860423) --
(5.818003314938593,1.0363270204989519) -- (5.8697672384446085,1.053585256295725) --
(5.909743589743589,1.0683760683760717) -- cycle;
\end{tikzpicture}}
} 

\newcommand{\customMiny}{\scalebox{0.23}{
\begin{tikzpicture}
\clip(4.83,0.365) rectangle (5.92,1.452);
\fill[line width=2.pt,fill=black,fill opacity=1.0] (5.909743589743589,0.7492877492877543) -- (5.8697672384446085,0.7640785613681009) -- (5.818003314938593,0.7813367971648741) -- (5.7620276587975034,0.7976755667777837) -- (5.7101105279150195,0.8107406115539613) -- (5.663902303911093,0.8207230891207606) -- (5.611608514153138,0.8301916514750758) -- (5.559344060086461,0.837750517407593) -- (5.516355816426169,0.8425615395020323) -- (5.480951393364173,0.8455802578138449) -- (5.442490031775766,0.8479002945794916) -- (5.405586554111292,0.8491913947512248) -- (5.371851851851851,0.8495726495726554) -- (5.338117149592409,0.8491913947512248) -- (5.301213671927935,0.8479002945794916) -- (5.262752310339529,0.8455802578138449) -- (5.227347887277532,0.8425615395020323) -- (5.18435964361724,0.837750517407593) -- (5.132095189550563,0.8301916514750758) -- (5.0798013997926095,0.8207230891207606) -- (5.033593175788682,0.8107406115539613) -- (4.981676044906198,0.7976755667777837) -- (4.92570038876511,0.7813367971648741) -- (4.873936465259094,0.7640785613681009) -- (4.833960113960112,0.7492877492877543) -- (4.833960113960112,1.0683760683760717) -- (4.873936465259094,1.053585256295725) -- (4.92570038876511,1.0363270204989519) -- (4.981676044906198,1.0199882508860423) -- (5.033593175788682,1.0069232061098647) -- (5.0798013997926095,0.9969407285430654) -- (5.132095189550563,0.9874721661887502) -- (5.18435964361724,0.979913300256233) -- (5.227347887277532,0.9751022781617937) -- (5.262752310339529,0.9720835598499811) -- (5.301213671927935,0.9697635230843344) -- (5.338117149592409,0.9684724229126012) -- (5.371851851851851,0.9680911680911706) -- (5.405586554111292,0.9684724229126012) -- (5.442490031775766,0.9697635230843344) -- (5.480951393364173,0.9720835598499811) -- (5.516355816426169,0.9751022781617937) -- (5.559344060086461,0.979913300256233) -- (5.611608514153138,0.9874721661887502) -- (5.663902303911093,0.9969407285430654) -- (5.7101105279150195,1.0069232061098647) -- (5.7620276587975034,1.0199882508860423) -- (5.818003314938593,1.0363270204989519) -- (5.8697672384446085,1.053585256295725) -- (5.909743589743589,1.0683760683760717) -- cycle;
\end{tikzpicture}}
} 

\newcommand{\GL}{{G^\mathcal{L}}}
\newcommand{\GR}{{G^\mathcal{R}}}
\newcommand{\HL}{{H^\mathcal{L}}}

\newcommand{\N}{\mathcal{N}}
\renewcommand{\P}{\mathcal{P}}
\renewcommand{\L}{\mathcal{L}}
\newcommand{\R}{\mathcal{R}}

\newcommand{\cgfuzzy}{\mathrel\|}
\newcommand{\Npc}{\mathbbm{N}\hspace{-0.5pt}\mathbbm{p}}
\newcommand{\Np}{\mathbbm{N}\hspace{-0.5pt}\mathbbm{p}\hspace{-0.5pt}^\infty\!}
\newcommand{\Nc}{\mathbbm{N}\hspace{-0.5pt}\mathbbm{p}\hspace{-0.5pt}^{\emph{C}}\!}
\newcommand{\moon}{\scalebox{1.1}{\ensuremath{\leftmoon}}}
\newcommand{\cgtiny}[1]{\customTiny_{#1}}
\newcommand{\cgminy}[1]{\customMiny_{#1}}

\newtheorem{theorem}{Theorem}

\newtheorem{corollary}[theorem]{Corollary}
\newtheorem{lemma}[theorem]{Lemma}
\newtheorem{definition}[theorem]{Definition}

\newtheorem{observation}[theorem]{Observation}

\newtheorem{example}[theorem]{Example}

\tolerance=1 
\emergencystretch=\maxdimen
\hyphenpenalty=10000
\hbadness=10000

\newcommand{\custombrace}[2]{
  \left\{
  \begin{array}{@{}c@{}}
    \rule{0pt}{#1}\\
    #2
  \end{array}
  \right.
}

\newcommand{\custombracee}[2]{%
  \left\}
  \begin{array}{@{}c@{}}
    \rule{0pt}{#1}\\
    #2
  \end{array}
  \right.
}

\newcommand{\customslash}[2]{%
  \makebox[0pt][l]{\rule[-#1]{#2}{#1}}|
}

\journal{TO DECIDE}

\begin{document}

\begin{frontmatter}

\title{Affine Normal Play}

\author{Urban Larsson}
\ead{larsson@iitb.ac.in}
\affiliation{organization={Indian Institute of Technology Bombay},
            country={India}}

\author{Richard J. Nowakowski}
\ead{r.nowakowski@dal.ca}
\affiliation{organization={Dalhousie University},
            country={Canada}}

\author{Carlos P. Santos\corref{correspondingauthor}}
\fntext[fn1]{Carlos Santos' work is funded by national funds through the FCT - Funda\c{c}\~{a}o para a Ci\^{e}ncia e a Tecnologia, I.P., under the scope of the projects UIDB/00297/2020 and UIDP/00297/2020 (Center for Mathematics and Applications).}
\ead{cmf.santos@fct.unl.pt}
\affiliation{organization={Center for Mathematics and Applications (NovaMath), FCT NOVA},
            country={Portugal}}

\begin{abstract}
There are many combinatorial games in which a move can terminate the game, such as a checkmate in {\sc chess}. These moves give rise to diverse situations that fall outside the scope of the classical normal play structure.  To analyze these games, an algebraic extension is necessary, including infinities as elements. In this work, affine normal play, the algebraic structure resulting from that extension, is analyzed. We prove that it is possible to compare two affine games using only their forms. Furthermore, affine games can still be reduced, although the reduced forms are not unique. We establish that the classical normal play is order-embedded in the extended structure, constituting its substructure of invertible elements. Additionally, as in classical theory, affine games born by day $n$ form a lattice with respect to the partial order of games.
\end{abstract}

\begin{keyword}
Combinatorial Game Theory, normal play, terminating moves, infinities, entailing moves, complimenting moves, carry-on moves.
\MSC[2020] 91A46 \sep  \MSC[2020] 06A06
\end{keyword}

\end{frontmatter}

\clearpage
\section{Introduction}
\label{sec:intro}
Combinatorial Game Theory (CGT) is the branch of mathematics that studies games with perfect information and no chance, where the players take turns making moves. The required theoretical background for this document pertains to the normal play convention, where the player who cannot move loses. We assume the reader has a decent level of knowledge in CGT, while our interest is solely in non-loopy two-player short games. We refrain from delving into basic concepts such as game forms, options, followers, numbers, nimbers, infinitesimals, temperature, and so forth, even though this document is mostly self-contained. All fundamental concepts are presented and discussed in the references \cite{ANW007,BCG82,Con76,Sie013}.

Since the classical normal play structure $\Npc$  does not cover the possibility of checkmates, i.e., absorbing \emph{terminating moves} that immediately conclude the game, awarding victory to one of the players, it is imperative to consider an extension. Terminating moves are represented by $\infty$ and $\overline{\infty}$, where the first is positive (Left\footnote{In CGT, Left is a female and positive player, while Right is a male and negative player.} wins) and the second is negative (Right wins). The sole goal of this document is to extend $\Npc$ with these elements, exploring the algebraic properties of the resulting structure, which we denote as $\Np$. The idea is analogous to completing the real number line with infinities, leading to the affinely extended real number line. Nevertheless, in the context of games, the analysis of the resulting algebra is considerably more sophisticated. Inspired by this analogy, we refer to $\Np$ as \emph{affine normal play}\footnote{Designation inspired by geometry, where an affine space includes points at infinity to represent classes of parallel lines.}, and its elements are referred to as \emph{affine normal play games} (or just affine games).

Still in this introductory part, Subsection \ref{ssec:motivation} elaborates on the motivation for analyzing the mentioned extension. Examples from game practice are shown, making use of some classical rulesets.\footnote{In strict formal terms, in CGT, the word ``game'' denotes an abstract algebraic object, and the word ``ruleset'' denotes a real-world game. Nevertheless, we maintain a certain flexibility, occasionally using ``game'' to also refer to a real-world game. In such cases, the meaning is evident from the context.} We assume the reader knows the rules of the historical {\sc chess} and {\sc go}. In relation to the second, what is required is familiarity with the rules of \textsc{atari go}, also known as \textsc{first capture go} \citep{watari013,wchess013}. It is also important that the reader is familiar with rulesets analyzed in the specialized literature, particularly {\sc blue-red-hackenbush}, {\sc nim}, {\sc nimstring}, and {\sc amazons} \citep{BCG82}. Having pieces or other elements in many colors, Left is b\textbf{L}ue and Right is \textbf{R}ed. If the colors are black and white, Left is b\textbf{L}ack and Right is White (clea\textbf{R}).

In Section \ref{sec:structure}, fundamental definitions related to the structure under analysis are presented. In this section, Theorem \ref{thm:almostmonoid} addresses ``good'' properties of the disjunctive sum, and Theorem \ref{thm:relations} establishes that game equivalence and game order are indeed an equivalence relation and a partial order relation, respectively.

A relevant result from Section \ref{sec:ftanp} is Theorem \ref{thm:ftnp} (Fundamental Theorem of Affine Normal Play), which establishes that, just like in classical theory, a game $G$ is greater than or equal to zero if and only if Left wins $G$ playing second. As a consequence of this result, we have Corollary \ref{cor:ftnp2} (Order-Outcome Bijection), indicating that there is a total matching between the order relation and the outcome classes.

Section \ref{sec:optionsonly} contains the first of three main results in this document. To better understand this result, we recall that the classical structure $\Npc$ is an abelian group in which the inverse of a game $G$ is its conjugate $-G$, i.e., the game where the players switch roles. It is known that it is possible to compare two games $G$ and $H$ without resorting to the universal statement in the definition of order. This is achieved through an options-only procedure, which consists of playing $G+(-H)$, as $G\geqslant H$ if and only if Left wins $G+(-H)$ playing second. The following definition formalizes ``Left wins playing second'', and has the significant advantage of being useful when analyzing structures that are not groups, as we will see is the case of $\Np$.

\begin{definition}\label{def:maintenance}
Let $G$ and $H$ be two elements of $\Npc$. The pair $(G,H)$ \emph{satisfies the maintenance property}  if
\begin{enumerate}
  \item $\forall G^R$, $\exists G^{RL}$ such that $G^{RL}\geqslant H$ or $\exists H^{R}$ such that $G^R\geqslant H^R$;
  \item $\forall H^L$, $\exists H^{LR}$ such that $G\geqslant H^{LR}$ or $\exists G^L$ such that $G^L\geqslant H^L$.
\end{enumerate}
In that case, we write $\mathbf{M}(G,H)$.
\end{definition}

In other words, in $\Npc$, we have $G\geqslant H$ if and only if $\mathbf{M}(G,H)$. What is new in $\Np$ is that, due to the possibility of forcing sequences of moves, it is possible to have $G\geqslant H$ without the maintenance property as stated in Definition \ref{def:maintenance} being satisfied. Nevertheless, we prove that it is still possible to have an options-only procedure to compare $G$ with $H$. We have $G\geqslant H$ if and only if, playing second, Left wins $G \uplus (-H)$, where $\uplus$ refers to {\sc inquisitor game} that will be formalized later. This is equivalent to saying that, in $\Np$, $G$ is greater than or equal to $H$ if and only if a weaker version of the maintenance property is satisfied, which is the content of Theorem~\ref{thm:constructive}. Note that this is a theoretical fact with little practical relevance, as we estimate that in the vast majority of cases, the comparison can be made using only the maintenance property as stated in the previous definition. Moreover, cases where this cannot be done are immediately recognizable.

In Section \ref{sec:reductions}, it is proven that there are three ways to reduce a game form, with absorbing reversibility being the only one that does not exist in $\Npc$ (Theorem \ref{thm:areversibility}). The most interesting fact is that the reduced forms cease to be unique.

Section \ref{sec:cembedding} contains the second of three main results in this document. In addition to the circumstance that, with an appropriate interpretation, one can prove that $\Npc$ is order-embedded in $\Np$, Theorem \ref{thm:invertible} states the appealing fact that the classical structure is effectively the substructure of the invertible elements of affine normal play. It is not merely a technical proof of a result with little practical consequences. Alongside a straightforward result contained in the third section, this theorem allows us to conclude that when $H$ is a traditional Conway value, comparing $G$ with $H$ can be done by playing $G+(-H)$, just as in classical theory.

Section \ref{sec:classification} contains a classification of affine normal games. One notable feature is the possibility for a component in which, as long as the current player is not mortally threatened in another summand, the first thing they must do is deliver a check there. This possibility led to the formalization of the important concept of hammerzug, and the definition of the pathetic infinitesimals $\cgtiny{\infty}$ and $\cgminy{\infty}$. In a way, $\infty$ and $\overline{\infty}$ close the game line, while $\cgtiny{\infty}$ and $\cgminy{\infty}$ close the descending scales of infinitesimals.

Section \ref{sec:lattice} contains our third main result, establishing that the affine games born by day $n$ form a lattice structure (Theorem \ref{thm:lattice}). Furthermore, it is noteworthy that, while this does not occur in $\Npc$, in $\Np$ a restriction can alter the game equivalence.

Finally, given that the analysis of combinatorial rulesets is a primary application of CGT, the document concludes with Section \ref{sec:example}, exploring an {\sc atari go} endgame by determining the affine game values of its components.

\subsection{Motivational prelude}
\label{ssec:motivation}

\vspace{0.3cm}
The need to use infinities is directly related to the notion of ``urgency'', as measured through the CGT concept of temperature. As an example, consider the {\sc amazons} position~$G$, illustrated in Figure \ref{fig1}.

\begin{figure}[!htb]
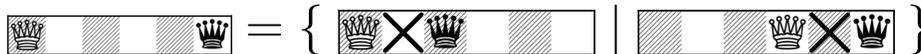

\noindent
\scalebox{0.7}{
\setchessboard{setpieces=\mylist}
\def\mylist{Qa1,qf1}
\chessboard[label=false,maxfield=f1,showmover=false]}\hspace{-0.3cm}\raisebox{0.15cm}{\scalebox{1.6}{$=\{$}}
\hspace{-0.7cm}\scalebox{0.8}{
\setchessboard{setpieces=\mylist}
\def\mylist{Qa1,qc1}
\chessboard[pgfstyle=cross,color=black,markfields=b1,backfields=b1,label=false,maxfield=f1,showmover=false]}
\hspace{-0.5cm}\raisebox{0.15cm}{\scalebox{1.6}{$|$}}
\hspace{-0.7cm}\scalebox{0.8}{
\setchessboard{setpieces=\mylist}
\def\mylist{Qd1,qf1}
\chessboard[pgfstyle=cross,color=black,markfields=e1,backfields=e1,label=false,maxfield=f1,showmover=false]}
\hspace{-0.52cm}\raisebox{0.15cm}{\scalebox{1.6}{$\}$}}

\vspace{-0.3cm}
\caption{A hot {\sc amazons} position.}
\label{fig1}
\end{figure}

While the players have other options, the two depicted in Figure~\ref{fig1} dominate all the others. If Left plays first, she is in time to ``conquer'' territory (by moving to $3$); if Right plays first, then it is Right who can play to $-3$. We have $G=\{3\,|\,-3\}=\pm 3$, with a temperature equal to $3$. Often, in a disjunctive sum $G+X$, both players aim to play in $G$ to gain territory. However, it is important to note the use of ``often'' and not ``always''. Consider the disjunctive sum $G+X=\pm 3+\pm 5$, shown in Figure \ref{fig2}. The component $X$ (at the top of the board) is a component where Left has a move to $5$ and Right has a move to $-5$; therefore, $X$ is hotter than $G$. Of course, in this position, it is now more urgent to play in $X$ than in $G$.

\begin{figure}[!htb]
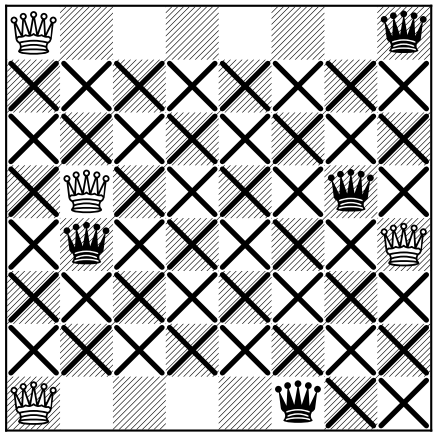

\begin{center}
\setchessboard{setpieces=\mylist}
\def\mylist{Qa1,Qa8,Qb5,Qh4,qh8,qf1,qg5,qb4}
\chessboard[pgfstyle=cross,color=black,
markfields=a2,markfields=a3,markfields=a4,markfields=a5,markfields=a6,markfields=a7,
markfields=b2,markfields=b3,markfields=b6,markfields=b7,
markfields=c2,markfields=c3,markfields=c4,markfields=c5,markfields=c6,markfields=c7,
markfields=d2,markfields=d3,markfields=d4,markfields=d5,markfields=d6,markfields=d7,
markfields=e2,markfields=e3,markfields=e4,markfields=e5,markfields=e6,markfields=e7,
markfields=f2,markfields=f3,markfields=f4,markfields=f5,markfields=f6,markfields=f7,
markfields=g1,markfields=g2,markfields=g3,markfields=g4,markfields=g6,markfields=g7,
markfields=h1,markfields=h2,markfields=h3,markfields=h5,markfields=h6,markfields=h7,
label=false,maxfield=h8,showmover=false]
\end{center}

\vspace{-0.8cm}
\caption{First things first.}
\label{fig2}
\end{figure}

In the mathematical structure $\Npc$, there are no absolutely mandatory moves. More
precisely, given a game $G$, it is always possible to find a disjunctive sum $G+X$
with a more urgent component $X$. However, certain rulesets include terminating moves -- moves that are infinitely advantageous for the players making them, leading to an immediate victory.

The existence of terminating moves enables the existence of threats against which a player must defend. These are commonly referred to as \emph{checks}. Such situations have infinite temperature (the urgency is infinitely extreme). It is worthwhile to re-iterate this point, since it is central to all the analysis that follows: due to the fact that a terminating move has an absorbing nature, a player in check must always try to defend himself. In particular, this allows a player to interpose checks to ``re-arrange the position'' before responding to
the opponent's last move with a quiet move. Even world-class chess players can forget the forcing nature of a \emph{zwischenzug}\footnote{``Zwischenzug'' (German for ``intermediate move'') is a chess tactic where a player, instead of playing the expected move (commonly a recapture), first  threatens the opponent with a check, and only after their response plays the expected move. Such a move is also called ``in-between check'' or \emph{intermezzo}.}, as can be seen in the example illustrated in Figure \ref{fig3}.

\begin{figure}[!htb]
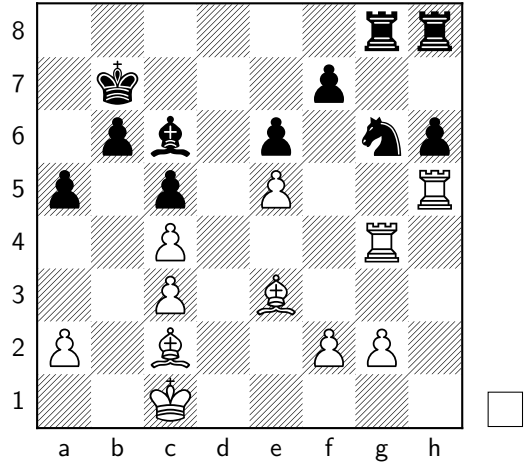

\begin{center}
\setchessboard{setpieces=\mylist}
\def\mylist{Kc1, Pa2, Pc3, Pc4, Pe5, Pf2, Pg2, Bc2, Be3, Rg4, Rh5, pb6, pa5, pc5, pf7, pe6, ph6, bc6, ng6, kb7, rg8, rh8}
\chessboard
\end{center}

\vspace{-0.4cm}
\caption{Carlsen-Anand, World Championship, RUS, 15 Nov 2014, Round 6. An eventual capture of the white pawn at e5 by the black knight is simply bad since that knight is captured by the white rook at h5 after two ``automatic moves'' resulting from the exchange of a pair of rooks at g8. Due to that, Carlsen thought he had the position under control and made the very bad move 26.Kd2?. Incredibly, Anand missed the opportunity and replied also with the bad move 26...a4?. Anand could have explored the opportunity with a pair of zwischenzugs: 26.Kd2 Ne5! 27.Rg8 Nc4 (zwischenzug) 28.Kd3 Nb2 (one more zwischenzug) 29.Kd2 Rg8 (only now Black recaptures the white rook at g8, with a winning position).}
\label{fig3}
\end{figure}

Checks and forcing sequences cannot be represented by elements of $\Npc$. Consequently, the $\Np$ framework is needed to analyze some rulesets. In that framework, $\infty+\overline{\infty}$ is not defined. However, if $X \neq \overline{\infty}$, then $\infty + X = \infty$, and if $X \neq \infty$, then $\overline{\infty} + X = \overline{\infty}$. These follow from the absorbing nature of the terminating moves. There are three important situations of play that require $\Np$ for their analysis. Examples are given in the next subsections. In the first, the checks are overt, but they are hidden in the second and third.

\subsubsection{Terminating moves and infinite heat components}
\label{sssec:terminating}

\vspace{0.3cm}
{\sc chess} was mentioned because almost all readers are familiar with checks and checkmates within the context of this ruleset. However, it is a loopy ruleset that allows ties. Additionally, {\sc chess} positions do not tend to break down into disjoint components. In this work, from now on, we will use {\sc atari go} to exemplify some concepts, as it is a ruleset for which the use of $\Np$ is particularly suitable.

 In Figure \ref{fig4}, the black stones marked with triangles are ``alive''. This is a term in {\sc go} indicating that they cannot be captured; possibly, they are connected to some group with ``eyes'', as indicated by the arrow in the right corner. Thus, the region bounded by these stones is a disjoint component.

\begin{figure}[!htb]
\begin{center}
\includegraphics[scale=1]{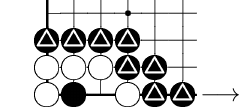}
\end{center}

\vspace{-0.4cm}
\caption{Infinite heat {\sc atari go} component.}
\label{fig4}
\end{figure}

Left, playing first, can capture a white stone, winning the game; Right, playing first, can capture a black stone, winning the game. Thus, $H=\{\infty\,|\,\overline{\infty}\}=\pm\infty$ has infinite temperature; the player who has the move wins the game. For $X$ different from $\infty$ and $\overline{\infty}$, in $H +X$, $X$ is irrelevant, regardless of what $X$ may be. This is in sharp contrast to the {\sc amazons} position in Figure \ref{fig1}.\\

Consider now the {\sc atari go} position $W$ shown in Figure \ref{fig5}. The Left option $W^L$ corresponding to placing a black stone on point ``a'' is a check since it threatens an immediate capture\footnote{There is a much better move for Left than this check, which is left as an exercise for the reader.}.

\begin{figure}[!htb]
\begin{center}
\includegraphics[scale=1]{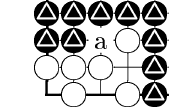}
\end{center}

\vspace{-0.4cm}
\caption{The Left option $W^{L}$ is a check.}
\label{fig5}
\end{figure}

Right can defend with the option $W^{LR}$, as shown in Figure \ref{fig6}.

\begin{figure}[!htb]
\begin{center}
\includegraphics[scale=1]{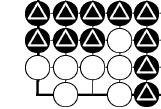}
\end{center}

\vspace{-0.4cm}
\caption{$W^{LR}=0$.}
\label{fig6}
\end{figure}

Note that in $W^{LR}$, a Left option $W^{LRL}=\overline{\infty}$ is a \emph{suicidal} move, that is, an option that Left never chooses unless it is the only thing she can do. Having this option or not makes no difference. Sometimes, in certain rulesets, such as {\sc go} or {\sc atari go}, these moves are considered illegal and cannot be made. For the sake of formalization, even in such cases, $\overline{\infty}$ is still included in the game form as if the move could be made, since this procedure does not change the nature of the ruleset. A single Left option $\overline{\infty}$ is the equivalent in $\Npc$ to having an empty set of options; if there are other Left options, $\overline{\infty}$ is always dominated. On the other hand, the Right options in $W^{LR}$ are moves that allow Left to immediately play to $\infty$. These moves take the form $\{\infty\,|\,\ldots\}$, and Right never chooses them unless it is the only thing he can do. In practice, these Right options can be replaced by $\infty$ since they are also a form of suicide, just taking one more move. Thus, $W^{LR}=\{\overline{\infty}\,|\,\infty\}$ is a game in which neither player has moves except those that self-inflict defeat. As we will see later, this game is zero, the identity of $\Np$. It is now easy to understand that the initial check is $W^L=\{\infty\,|\,0\}$.

In summary, if a Left option of a game $G$ is $G^L=\infty$, then $G^L$ is a checkmate; if a Left option is $G^L=\{\infty\,|\,G^{L\mathcal{R}}\}$, then $G^L$ is a check; if a Left option is $G^L=\overline{\infty}$, then $G^L$ is a suicidal move; if a Left option is $G^L=\{G^{L\mathcal{L}}\,|\,\overline{\infty}\}$, then the move allows being checkmated and can be replaced by $\overline{\infty}$. The same logic applies to Right options.

\subsubsection{Entailing moves}
\label{sssec:entailing}

\vspace{0.3cm}
\emph{Entailing moves}\footnote{The designation ``entailing move'' was proposed in \cite{BCG82} precisely to convey the idea that a player is compelled (``entailed'') to play in a certain way.} are moves that \emph{force} the opponent to respond locally within a certain range of options. It may be possible for the opponent to respond locally in more than one way; nevertheless, the opponent is restricted to those local options by the entailing move. Of course, this disrupts the usual logic of disjunctive sums. Consider a disjunctive sum $G_1+G_2+\ldots+G_k$. If Left  is not lethally threatened in any component and makes an entailing move in $G_1$ to $G_1^L$, then the imposed restriction on Right prevents him from freely choosing any component from $G_1^L+G_2+\ldots+G_k$ for his reply.

As an example, consider {\sc nim} played on sums of piles of stones $G_1+\ldots+G_k$, but with the following change in the rules: whenever a player leaves three stones in one of the piles, the opponent must play in that very same pile on the next move. This ruleset does not have mathematical interest, since a straightforward  argument shows that all initial positions with at least a pile of more than 3 stones are $\mathcal{N}$-positions. The representation of positions through game forms is what is of interest. A pile of four stones in traditional {\sc nim} is described through a game form of $\Npc$, more specifically through $\{0,*,*2,*3\,|\,0,*,*2,*3\}$.
In the modified version, a pile of four stones should be described as $\{0,*,*2,*3^{\text{\scalebox{0.7}{\(\swarrow\)}}}\,|\,0,*,*2,*3^{\text{\scalebox{0.7}{\(\swarrow\)}}}\}$, where $*3^{\text{\scalebox{0.7}{\(\swarrow\)}}}$ means ``next player is compelled to play in this component, even when the game is not played in isolation''. It is because there are no game forms of this type in $\Npc$ that $\Np$ is necessary. The question that arises is how one can use a form of $\Np$ to represent the desired situation.

Any {\sc chess} player knows that a check compels the opponent to defend. In other words, a check is, by its nature, an entailing move. Since some of elements of $\Np$ are checks, they can be employed as ``gadgets'' to describe situations similar to the modified {\sc nim}. For example, $\{0,*,*2,\{\infty\,|\,0,*,*2\}\,|\,0,*,*2,\{\,0,*,*2\,|\,\overline{\infty}\}\}$ perfectly represents a pile of four stones in the modified ruleset. As in the movie Godfather, checks are ``offers'' (threats) that the opponent cannot refuse (cannot avoid defending). This explains why these gadgets work. When Left moves to $\{\infty\,|\,0,*,*2\}$, Right has to defend by playing to $0$, $*$, or $*2$. Since $0$, $*$, or $*2$ are the options Right would have if forced to play in $*3$, the Left option $\{\infty\,|\,0,*,*2\}$ is, in practice, a move that works as one that compels Right to play in $*3$.

In summary, if a Left entailing option compels Right to respond with an element from $A$, the way to represent this entailing move through a game form of $\Np$ is to use a check $G^L$ where $G^{L\mathcal{R}}$ is the set $A$, i.e., $G^L = \{\infty\,|\,A\}$. Obviously, Right entailing options are represented in a similar manner.

\subsubsection{Carry-on moves}
\label{sssec:carry}

\vspace{0.3cm}
Rulesets like {\sc nimstring} allow the possibility of \emph{carry-on} moves, moves that compel the player making them to retain the right to play. In this particular ruleset, when a player closes a box, the player must move again. Figure \ref{fig7} shows a {\sc nimstring} position that consists of a disjunctive sum $G+H$, where $G$ is the component on the left and $H$ is the component on the right. For example, if Left closes a box in the left component, Left \emph{has to play again}, being able to \emph{choose either of the two components to continue the play}.

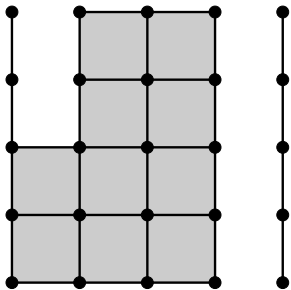
\begin{figure}[!htb]
\begin{center}
\scalebox{.9}{
\definecolor{cqcqcq}{rgb}{0.7529411764705882,0.7529411764705882,0.7529411764705882}
\begin{tikzpicture}
\clip(9.5,6.5) rectangle (14.5,11.5);
\fill[gray!40!white] (13.,11.) -- (11.,11.) -- (11.,9.) -- (10.,9.) -- (10.,7.) -- (13.,7.) -- cycle;
\draw [line width=1.pt] (10.,11.)-- (10.,10.);
\draw [line width=1.pt] (10.,10.)-- (10.,9.);
\draw [line width=1.pt] (10.,9.)-- (11.,9.);
\draw [line width=1.pt] (11.,9.)-- (11.,10.);
\draw [line width=1.pt] (11.,10.)-- (11.,11.);
\draw [line width=1.pt] (12.,11.)-- (12.,7.);
\draw [line width=1.pt] (13.,11.)-- (13.,7.);
\draw [line width=1.pt] (14.,11.)-- (14.,7.);
\draw [line width=1.pt] (11.,11.)-- (13.,11.);
\draw [line width=1.pt] (11.,10.)-- (13.,10.);
\draw [line width=1.pt] (11.,9.)-- (13.,9.);
\draw [line width=1.pt] (10.,9.)-- (10.,7.);
\draw [line width=1.pt] (11.,9.)-- (11.,7.);
\draw [line width=1.pt] (10.,8.)-- (13.,8.);
\draw [line width=1.pt] (10.,7.)-- (13.,7.);
\begin{scriptsize}
\draw [fill=black] (10.,11.) circle (2.5pt);
\draw [fill=black] (10.,10.) circle (2.5pt);
\draw [fill=black] (10.,9.) circle (2.5pt);
\draw [fill=black] (10.,8.) circle (2.5pt);
\draw [fill=black] (10.,7.) circle (2.5pt);
\draw [fill=black] (11.,11.) circle (2.5pt);
\draw [fill=black] (11.,10.) circle (2.5pt);
\draw [fill=black] (11.,9.) circle (2.5pt);
\draw [fill=black] (11.,8.) circle (2.5pt);
\draw [fill=black] (11.,7.) circle (2.5pt);
\draw [fill=black] (12.,7.) circle (2.5pt);
\draw [fill=black] (12.,8.) circle (2.5pt);
\draw [fill=black] (12.,9.) circle (2.5pt);
\draw [fill=black] (12.,10.) circle (2.5pt);
\draw [fill=black] (12.,11.) circle (2.5pt);
\draw [fill=black] (13.,11.) circle (2.5pt);
\draw [fill=black] (13.,10.) circle (2.5pt);
\draw [fill=black] (13.,9.) circle (2.5pt);
\draw [fill=black] (13.,8.) circle (2.5pt);
\draw [fill=black] (13.,7.) circle (2.5pt);
\draw [fill=black] (14.,7.) circle (2.5pt);
\draw [fill=black] (14.,8.) circle (2.5pt);
\draw [fill=black] (14.,9.) circle (2.5pt);
\draw [fill=black] (14.,10.) circle (2.5pt);
\draw [fill=black] (14.,11.) circle (2.5pt);
\end{scriptsize}
\end{tikzpicture}
}
\end{center}

\vspace{-0.4cm}
\caption{ A disjunctive sum in \textsc{nimstring}.}
\label{fig7}
\end{figure}

In \cite{BCG82}, carry-on moves were called \emph{complimenting moves}. In this work, we prefer the designation carry-on move as we believe it to be much more descriptive. In Figure \ref{fig8}, we show the component $G$ of the previous example, along with its two options, one of which is a carry-on move. If the top bar is drawn, the next player continues, but if the middle bar is drawn, then the current player has to carry on.

\begin{figure}[!htb]
\begin{center}
\scalebox{0.9}{
\begin{tikzpicture}
\draw [line width=1.pt] (5.04,-4.02)-- (5.04,-5.04);
\draw [line width=1.pt] (5.04,-5.04)-- (5.04,-6.04);
\draw [line width=1.pt] (4.04,-5.04)-- (4.04,-6.04);
\draw [line width=1.pt] (4.04,-4.04)-- (4.04,-5.04);
\draw [line width=1.pt] (4.04,-6.04)-- (5.04,-6.04);
\begin{scriptsize}
\draw [fill=black] (4.04,-4.04) circle (2.5pt);
\draw [fill=black] (5.04,-4.02) circle (2.5pt);
\draw [fill=black] (5.04,-5.04) circle (2.5pt);
\draw [fill=black] (5.04,-6.04) circle (2.5pt);
\draw [fill=black] (4.04,-5.04) circle (2.5pt);
\draw [fill=black] (4.04,-6.04) circle (2.5pt);
\end{scriptsize}
\end{tikzpicture}\hspace{2 cm}
\begin{tikzpicture}
\draw [line width=1.pt] (5.04,-4.02)-- (5.04,-5.04);
\draw [line width=1.pt] (5.04,-5.04)-- (5.04,-6.04);
\draw [line width=1.pt] (4.04,-5.04)-- (4.04,-6.04);
\draw [line width=1.pt] (4.04,-4.04)-- (4.04,-5.04);
\draw [line width=1.pt] (4.04,-6.04)-- (5.04,-6.04);
\draw [line width=1.pt] (4.04,-4.02)-- (5.04,-4.02);
\begin{scriptsize}
\draw [fill=black] (4.04,-4.04) circle (2.5pt);
\draw [fill=black] (5.04,-4.02) circle (2.5pt);
\draw [fill=black] (5.04,-5.04) circle (2.5pt);
\draw [fill=black] (5.04,-6.04) circle (2.5pt);
\draw [fill=black] (4.04,-5.04) circle (2.5pt);
\draw [fill=black] (4.04,-6.04) circle (2.5pt);
\end{scriptsize}
\end{tikzpicture}
\hspace{1 cm}
\begin{tikzpicture}
\fill[gray!40!white] (4.04,-6.04) rectangle (5.04,-5.04);
\draw [line width=1.pt] (5.04,-4.02)-- (5.04,-5.04);
\draw [line width=1.pt] (5.04,-5.04)-- (5.04,-6.04);
\draw [line width=1.pt] (4.04,-5.04)-- (4.04,-6.04);
\draw [line width=1.pt] (4.04,-4.04)-- (4.04,-5.04);
\draw (5.1,-5.2) node[anchor=north west] {\rm{Carry-on}};
\draw [line width=1.pt] (4.04,-6.04)-- (5.04,-6.04);
\draw [line width=1.pt] (4.04,-5.04)-- (5.04,-5.04);
\begin{scriptsize}
\draw [fill=black] (4.04,-4.04) circle (2.5pt);
\draw [fill=black] (5.04,-4.02) circle (2.5pt);
\draw [fill=black] (5.04,-5.04) circle (2.5pt);
\draw [fill=black] (5.04,-6.04) circle (2.5pt);
\draw [fill=black] (4.04,-5.04) circle (2.5pt);
\draw [fill=black] (4.04,-6.04) circle (2.5pt);
\end{scriptsize}
\end{tikzpicture}
}
\end{center}

\vspace{-0.4cm}
\caption{ A \textsc{nimstring} component $G$ with its two options, a ``double-box'' and a carry-on.}
\label{fig8}
\end{figure}
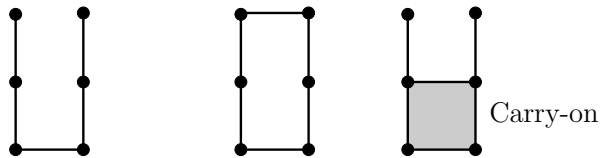

A carry-on move can be seen as a particular case of an entailing move. As previously mentioned, even when entailed, a player may have more than one option to respond. This happens when in a game form like $G^L =\{\infty\,|\,A\}$ the set $A$ has more than one element.

When $A$ has a single option, there is only one way for the player to cease being entailed. Suppose that in $G^L =\{\infty\,|\,A\}$, Right's only option is $G^{LR}$, i.e., $G^L =\{\infty\,|\,G^{LR}\}$. In this case, when Left opts for the entailing move, it is as if Left is ``jumping'' directly from $G$ to $G^{LR}$, since there will be an opportunity to \emph{play again} after replacing $G$ by $G^{LR}$ in the disjunctive sum. After the ``jump'', Left can play again on any component of the sum.

Returning to Figure \ref{fig8}, suppose Left closes a box by drawing the middle bar in $G$. The remaining position is a $3$-sided box, which is a second player win, i.e., a $\mathcal{P}$-position (zero). Since Left retains the right to play, she has to carry on in a position equal to zero. These moves, for Left and Right, are represented by the options $\{\infty\,|\,0\}$ (Left's carry-on move) and $\{0\,|\,\overline{\infty}\}$ (Right's carry-on move). If a player draws the top bar in $G$, then they place a $\mathcal{P}$-position (zero) in the disjunctive sum, but this time, passing the turn to the opponent. Thus, the game form that describes $G$ is $\{0,\{\infty\,|\,0\}\,|\,0,\{0\,|\,\overline{\infty}\}\}$.

The analysis of partizan rulesets with entailing moves, which can be of various types or only carry-on moves, is absent from the specialized literature. For example, one can consider {\sc whackenbush}, which prompts an interesting mathematical analysis. It is played like the classic {\sc blue-red-hackenbush} with an extra rule: whenever a player makes a move that drops one or more opponent's edges, they have to play again. Figure~\ref{fig9} illustrates an intriguing {\sc whackenbush} position that cannot be adequately described through an element of $\Npc$; thus, $\Np$ is invoked. The game value of the position is $\{\{\infty\,|\,0\}\,|\,0,1\}=\{\{\infty\,|\,0\}\,|\,0\}$. This negative value does not exist in $\Npc$.

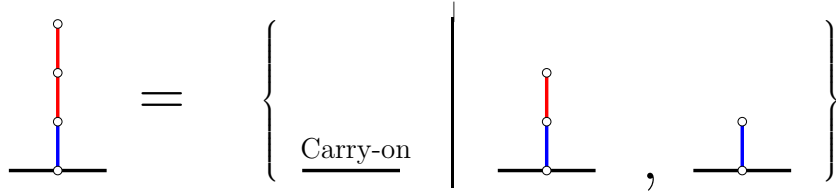
\begin{figure}[!htb]
\begin{center}
\scalebox{0.65}{
\definecolor{ffqqqq}{rgb}{1.,0.,0.}
\definecolor{qqqqff}{rgb}{0.,0.,1.}
\begin{tikzpicture}
\clip(1.5,1.5) rectangle (18.9,5.5);
\draw [line width=2.pt] (2.,2.)-- (4.,2.);
\draw [line width=2.pt] (8.,2.)-- (10.,2.);
\draw [line width=2.pt] (12.,2.)-- (14.,2.);
\draw [line width=2.pt] (16.,2.)-- (18.,2.);
\draw [line width=2.pt,color=qqqqff] (3.,2.)-- (3.,3.);
\draw [line width=2.pt,color=ffqqqq] (3.,3.)-- (3.,4.);
\draw [line width=2.pt,color=ffqqqq] (3.,4.)-- (3.,5.);
\draw [line width=2.pt,color=qqqqff] (13.,2.)-- (13.,3.);
\draw [line width=2.pt,color=ffqqqq] (13.,3.)-- (13.,4.);
\draw [line width=2.pt,color=qqqqff] (17.,2.)-- (17.,3.);
\begin{scriptsize}
\draw [color=black,fill=white] (3.,2.) circle (2.5pt);
\draw [color=black,fill=white] (3.,3.) circle (2.5pt);
\draw [color=black,fill=white] (3.,4.) circle (2.5pt);
\draw [color=black,fill=white] (3.,5.) circle (2.5pt);
\draw [color=black,fill=white] (13.,2.) circle (2.5pt);
\draw [color=black,fill=white] (13.,3.) circle (2.5pt);
\draw [color=black,fill=white] (13.,4.) circle (2.5pt);
\draw [color=black,fill=white] (17.,2.) circle (2.5pt);
\draw [color=black,fill=white] (17.,3.) circle (2.5pt);
\end{scriptsize}

\draw (4.5,3.75) node[anchor=north west] {\scalebox{3}{$=$}};
\draw (7,5.3) node[anchor=north west] {$\custombrace{3.3cm}{}$};
\draw (18.5,5.3) node[anchor=north west] {$\custombracee{3.3cm}{}$};
\draw (10.9,5.6) node[anchor=north west] {$\customslash{3.5cm}{1.5pt}$};
\draw (14.8,2.1) node[anchor=north west] {\scalebox{3}{$,$}};
\draw (7.82,2.8) node[anchor=north west] {\scalebox{1.4}{Carry-on}};
\end{tikzpicture}}
\end{center}

\vspace{-0.4cm}
\caption{ An intriguing {\sc whackenbush} position.}
\label{fig9}
\end{figure}

In summary, if, with a carry-on move, Left ``jumps'' directly from $G$ to $H$, maintaining the right to play, then this option can be described as $G^L=\{\infty\,|\,H\}$. This is because it can be seen as a Left option (a ``gadget'') that compels Right to respond with $H$, returning the right to play again to Left. In game practice, what happens  frequently is that Left moves from $G$ to $H$, playing again without any intervention from Right. With the gadget, Left moves from $G$ to $G^L$, and Right is forced to respond with $H$, allowing Left to play again. Of course, although Right intervenes, it is as if he had not, given the forced manner in which he had to do it. The carry-on option $G^L=\{\infty\,|\,H\}$ is commonly symbolized by $\circlearrowright^{H}$. Analogously, Right carry-on options $\{H\,|\,\overline{\infty}\}$ are denoted as $\circlearrowleft^{H}$. For example, the \textsc{nimstring} position shown in Figure \ref{fig7} can be represented as $\{0,\circlearrowright^{0}\,|\,0,\circlearrowleft^{0}\}$. The {\sc whackenbush} position shown in Figure \ref{fig8} can be represented as $\{\circlearrowright^{0}\,|\,0\}$.

\section{Mathematical structure $\Np$ of affine game forms}
\label{sec:structure}

In this section, we present the fundamental definitions and some initial results related to affine normal play. The first step, before delving into the structure, is to define rigorously the objects under study, which are, naturally, games. Like many others in CGT, it is a recursive definition.

\begin{definition}[Affine Games]\label{def:recursion}
Let $\mathbb{A}_{-1}=\{\infty,\overline{\infty}\}$ be the base case, the set of \emph{affine} games born on day $-1$; these are the only games in which players have no options. For $i>-1$, $\mathbb{A}_{i}$ is the set of games $\{G^\mathcal{L}\,|\,G^\mathcal{R}\}$, where $G^\mathcal{L}$ and $G^\mathcal{R}$ are non-empty finite sets of games contained in $\cup_{j<i} \mathbb{A}_j$; these are the affine games born by day $i>-1$. Moreover, if $i>-1$, $G\in \mathbb{A}_{i}$, and $G\not\in\mathbb{A}_{i-1}$, then $G$ is born on day $i$; it is said to have a \textit{formal birthday} of $i$, denoted by $\tilde{b}(G) = i$. The set of affine games is $\mathbb{A}=\cup_{i \geqslant -1} \mathbb{A}_i$.
\end{definition}

The definition of affine games deserves some considerations. We observe that, except for games of day $-1$, both players have options. In other words, affine games are dicotic, meaning either both players have options, or neither does -- this latter scenario pertains to cases where the game is a terminating game, i.e., it is either $\infty$ or $\overline{\infty}$. One might ask if the structure we are about to propose is limited by not being able to encompass rulesets with positions where one player has moves and the other does not. The answer to the question is easy: \emph{the structure also covers those rulesets}. The only thing that needs to be done is a modification by choosing $G^\mathcal{L}=\{\overline{\infty}\}$ or $G^\mathcal{R}=\{\infty\}$ for the player without moves. In practice, if the empty set is replaced with a single suicidal move, the same effect is achieved, as the existence of that move is equivalent to having no moves at all. It should also be mentioned that the dicotic nature of the structure streamlines the thinking process in many proofs, as one can assume for a non-terminating game that both players have moves. Note also that all games end with a move to $\infty$ or to $\overline{\infty}$, i.e., all games end with a ``checkmate'', which can be self-inflicted or not. In the case where the move is a suicidal move, it may seem that the name ``normal play'' is poorly chosen, as in that case the last player loses. However, this is not the case, as it is again a matter of interpretation. Before making the suicidal move, in practice, the player was deprived of satisfactory moves, and that is the fundamental reason for their loss. In other words, the structure preserves the logic of the normal play convention.

The reason why the recursion begins on day $-1$ instead of day $0$ is purely instrumental. In Section \ref{sec:cembedding}, we will prove that the classical structure $\Npc$ is order-embedded in $\Np$. Starting the recursion on day $-1$ allows a game's formal birthday to match in both structures if the game belongs to both. In a way, the infinities play a role analogous to the empty set in the classical structure, with a difference: in the classical structure, the empty set is always ``horrible'' for a player, whereas in the structure we are proposing, an infinity can be ``horrible'' or ``wonderful'', depending on its sign. Somehow, in the classical structure, the empty set ``exists'' on day $-1$, before day zero, as it is used to make the only game of day zero. Similarly, infinities exist on day $-1$. Since there are two infinities and not just one empty set, there is more than one game born by day zero. There is another difference, but it is not very crucial. In the classical structure, the empty set is a useful \emph{atom} for constructing game forms, but it is not a game itself. The infinities are games. However, in a certain sense, the infinities also serve as atoms for building game forms. This subject will be revisited in Section \ref{sec:lattice}.

Moving on to the definition of disjunctive sum, it makes sense to establish an initial parallel with what happens in the extended real number line. The disjunctive sum is well-defined except in the case where there are infinities of different signs in the sum. Regarding the game practice, this poses no issue because there is never more than one terminating move in a game, given its absorbing nature; the game ends as soon as one occurs.

\begin{definition}[Disjunctive Sum]\label{def:disjunctive}
The disjunctive sum of two affine games $G$ and $H$ is given by
\[G+ H=  \begin{cases}
\textrm{not defined}, \quad\textrm{ if }(G=\infty\textrm{ and }H=\overline{\infty})\textrm{ or }(G=\overline{\infty} \textrm{ and }H=\infty);\\
\infty, \quad\quad\quad\quad\,\,\,\,\textrm{ if }(G=\infty\textrm{ and }H\neq\overline{\infty})\textrm{ or }(G\neq\overline{\infty} \textrm{ and }H=\infty);\\
\overline{\infty}, \quad\quad\quad\quad\,\,\,\,\textrm{ if }(G=\overline{\infty}\textrm{ and }H\neq\infty)\textrm{ or }(G\neq\infty \textrm{ and }H=\overline{\infty});\\
\{G^\mathcal{L}+H,G+H^\mathcal{L}\mid G^\mathcal{R}+H,G+H^\mathcal{R}\}, \quad\textrm{ otherwise.}
\end{cases}
\]
\end{definition}

\begin{observation}
The definition of disjunctive sum is recursive and is presented with some abuse of language: $G^\mathcal{L}+H,G+H^\mathcal{L}$ means $\{G^L+H\,|\,G^L\in G^\mathcal{L}\}\cup \{G+H^L\,|\,H^L\in H^\mathcal{L}\}$ (and the same for Right's options).
\end{observation}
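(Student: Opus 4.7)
The observation encapsulates two assertions: that Definition~\ref{def:disjunctive} is a legitimate recursive definition, and that the expression ``$G^\mathcal{L}+H,\,G+H^\mathcal{L}$'' abbreviates the union $\{G^L+H\mid G^L\in G^\mathcal{L}\}\cup\{G+H^L\mid H^L\in H^\mathcal{L}\}$ (and analogously on the Right side). The second half is a notational convention, so the mathematical content lies entirely in verifying that the recursion is well-founded and that its output really is an affine game in the sense of Definition~\ref{def:recursion}.

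The plan is to proceed by strong induction on the pair $(\tilde{b}(G),\tilde{b}(H))$. In the base cases at least one of $G$ or $H$ belongs to $\mathbb{A}_{-1}=\{\infty,\overline{\infty}\}$, and the first three clauses of Definition~\ref{def:disjunctive} exhaust every such combination: the sum is declared undefined exactly when the two terminating games carry opposite signs, and is otherwise $\infty$ or $\overline{\infty}$. In the inductive step both $G$ and $H$ have formal birthday at least $0$, so by Definition~\ref{def:recursion} their option sets $G^\mathcal{L},G^\mathcal{R},H^\mathcal{L},H^\mathcal{R}$ are non-empty and finite, and every option has strictly smaller formal birthday than its parent. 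Hence each summand $G^L+H$, $G+H^L$, $G^R+H$, $G+H^R$ in the recursive clause falls under the induction hypothesis, and the collections gathered on each side of the bar form non-empty finite sets of affine games---exactly the shape demanded by Definition~\ref{def:recursion}. This shows that the recursive clause produces a well-defined affine game whenever it is invoked.

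The only subtlety worth watching is that an option such as $G^L$ may itself lie in $\mathbb{A}_{-1}$, so that the ``inner'' sum $G^L+H$ is evaluated via the base-case clauses rather than the recursive one. I expect this to be the mildly delicate point in a completely formal write-up, but it resolves at once: in the recursive branch one has $G\notin\{\infty,\overline{\infty}\}$ and $H\notin\{\infty,\overline{\infty}\}$, so a substitution of the form $\infty+H$ or $G+\overline{\infty}$ is covered by the second or third clause and never triggers the forbidden case $\infty+\overline{\infty}$. With this, the recursion terminates on precisely the set of permissible pairs, and the abbreviated notation on the right-hand side unambiguously denotes the option sets, as claimed.
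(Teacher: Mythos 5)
This observation carries no proof in the paper: it is a purely notational remark explaining that the comma-separated expression in Definition~\ref{def:disjunctive} abbreviates a union of option sets. You have instead read it as an implicit claim that the recursion is well-founded and closes inside $\mathbb{A}$, and supplied an argument for that. Your argument is sound: induction on $\tilde{b}(G)+\tilde{b}(H)$ works because Definition~\ref{def:recursion} forces every option to have strictly smaller formal birthday, the option sets are non-empty and finite so the recursive clause produces a game of the shape required by Definition~\ref{def:recursion}, and you correctly isolate the one delicate point, namely that an inner sum such as $G^L+H$ with $G^L\in\{\infty,\overline{\infty}\}$ is caught by the second or third clause and can never hit the undefined case $\infty+\overline{\infty}$, since in the recursive branch $H$ is itself not an infinity. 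So your write-up proves something true and useful that the paper takes for granted; it simply goes beyond what the observation asserts, which is only the meaning of the abbreviated notation. There is no discrepancy to reconcile, only a difference in how much the two texts choose to make explicit.
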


A second parallel concerns the following theorem. The extended real number line is ``almost'' an abelian group, with only one issue when infinities are involved. Similarly, the affine normal play structure is ``almost'' an abelian monoid.

\begin{theorem}\label{thm:almostmonoid} Let $G$, $H$, and $J$ be affine games. We have the following:
\begin{itemize}
  \item $G + (H + J)$ and $(G + H) + J$ are either equal or both undefined;
  \item $G + H$ and $H + G$ are either equal or both undefined.
\end{itemize}
\end{theorem}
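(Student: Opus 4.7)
My plan is to prove both statements by induction on the sum of formal birthdays of the operands, with a case split on how many of them lie in the base set $\{\infty,\overline{\infty}\}$. By Definition~\ref{def:recursion}, every option $G^L$ or $G^R$ of a non-infinity game $G$ satisfies $\tilde{b}(G^L),\tilde{b}(G^R)<\tilde{b}(G)$, so the measure $\tilde{b}(G)+\tilde{b}(H)$ (respectively $\tilde{b}(G)+\tilde{b}(H)+\tilde{b}(J)$) strictly decreases when any operand is replaced by one of its options, making the induction well-founded. I would settle commutativity first, because once available it collapses much of the case analysis for associativity.

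For commutativity, if at least one of $G,H$ is an infinity, I inspect the first three clauses of Definition~\ref{def:disjunctive}: each is symmetric in the roles of the two operands, so $G+H$ is undefined (respectively equal to $\infty$ or $\overline{\infty}$) exactly when $H+G$ is. If neither is an infinity, both sums fall under the ``otherwise'' clause. The induction hypothesis, applied to the option-level sums (whose formal-birthday total is strictly smaller), gives $G^L+H=H+G^L$ and $G+H^L=H^L+G$ and the analogous Right equalities, so the Left and Right option sets of $G+H$ and $H+G$ coincide and the two forms are literally equal.

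For associativity, I case-split on how many of $G,H,J$ are infinities. If none is an infinity, then $G+H$ and $H+J$ are computed via the ``otherwise'' clause, which always yields a form $\{\cdots\mid\cdots\}$ and is therefore itself a non-infinity; hence the outer sums $(G+H)+J$ and $G+(H+J)$ also fall under the ``otherwise'' clause. Expanding,
\[
(G+H)+J=\{\,(G^L+H)+J,\ (G+H^L)+J,\ (G+H)+J^L\ \mid\ \cdots\,\},
\]
\[
G+(H+J)=\{\,G^L+(H+J),\ G+(H^L+J),\ G+(H+J^L)\ \mid\ \cdots\,\},
\]
and the induction hypothesis equates matching options one-for-one (options that happen to equal $\infty$ or $\overline{\infty}$ are still covered, since their formal birthday is $-1$, strictly less than that of any non-infinity parent). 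If at least one operand is an infinity, I use commutativity to place the infinities into canonical positions and then check the finitely many remaining patterns directly from Definition~\ref{def:disjunctive}: for instance, if $G=\infty$ and $H,J\neq\overline{\infty}$ both bracketings collapse to $\infty$, whereas whenever both $\infty$ and $\overline{\infty}$ appear among $G,H,J$ each bracketing is undefined. The only real obstacle is organizational, namely to verify that the ``undefined'' status propagates identically on both sides across every infinity configuration; commutativity cuts this bookkeeping roughly in half, and the non-infinity step is a routine structural induction mirroring the classical proof for $\Npc$.
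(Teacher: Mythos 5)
Your proposal is correct and takes essentially the same approach as the paper: dispose of the configurations involving $\infty$ or $\overline{\infty}$ directly from Definition~\ref{def:disjunctive}, then expand the ``otherwise'' clause and match options via the induction hypothesis. The only differences are organizational --- you make the induction measure (sum of formal birthdays) explicit and prove commutativity first to streamline the associativity case analysis, whereas the paper checks the infinity cases for associativity directly --- but the substance is identical.
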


\begin{proof} The items are established as follows.
\begin{itemize}
  \item If one of the summands is $\infty$ and one of the others is $\overline{\infty}$, it is easy to verify that both $G + (H + J)$ and $(G + H) + J$ are undefined. If one of the summands is $\infty$ without either of the others being $\overline{\infty}$, then both $G + (H + J)$ and $(G + H) + J$ are equal to $\infty$. If one of the summands is $\overline{\infty}$ without either of the others being $\infty$, then both $G + (H + J)$ and $(G + H) + J$ are equal to $\overline{\infty}$. Thus, assume that none of $G$, $H$, and $J$ are infinities. In that case,
      \begin{align*}
    \left(G+(H+J)\right)^\mathcal{L} & =\left(G^\mathcal{L}+(H+J)\right)\cup \left(G+(H+J)^\mathcal{L}\right) \quad &&\\
         & =\left(G^\mathcal{L}+(H+J)\right)\cup \left(G+(H^\mathcal{L}+J)\right)\cup \left(G+(H+J^\mathcal{L})\right) \quad &&\\
    \text{(Induction)} &=\left((G^\mathcal{L}+H)+J\right)\cup \left((G+H^\mathcal{L})+J\right)\cup \left((G+H)+J^\mathcal{L}\right)&& \\
 &=\left((G+H)^\mathcal{L}+J\right)\cup \left((G+H)+J^\mathcal{L}\right)&& \\
     &= \left((G+H)+J\right)^\mathcal{L}. \quad &&
\end{align*}
Note that the induction works well, given that the sums are defined because none of $G$, $H$, and $J$ are infinities. The fact that $\left(G+(H+J)\right)^\mathcal{R}=\left((G+H)+J\right)^\mathcal{R}$ is proven in a similar way. Hence, $G + (H +J)=\{(G+(H+J))^\mathcal{L}\,|\,(G+(H+J))^\mathcal{R}\}=\{((G+H)+J)^\mathcal{L}\,|\,((G+H)+J)^\mathcal{R}\}=(G+H)+J$.
  \item If one of the summands is $\infty$ and the other is $\overline{\infty}$, then both $G + H$ and $H + G$ are undefined. If one of the summands is $\infty$ without the other being $\overline{\infty}$, then both $G + H$ and $H + G$ are equal to $\infty$. If one of the summands is $\overline{\infty}$ without the other being $\infty$, then both $G + H$ and $H + G$ are equal to $\overline{\infty}$. Thus, assume that neither $G$ nor $H$ are infinities. In that case,
      \begin{align*}
    G+H &= \{G^\mathcal{L}+H,G+H^\mathcal{L}\mid G^\mathcal{R}+H,G+H^\mathcal{R}\} \quad &&\\
     &= \{H^\mathcal{L}+G,H+G^\mathcal{L}\mid H^\mathcal{R}+G,H+G^\mathcal{R}\} \quad &&\text{(Induction)} \\
     &= H + G. \quad &&
\end{align*}
Note that the induction works well, given that the sums are defined because neither $G$ nor $H$ are infinities.
\end{itemize}
\end{proof}

In order to define an equivalence relation and a partial order of games, it is necessary to first define the concept of outcomes. This is done in the usual manner.

\begin{definition}[Individualized Outcomes]\label{def:individualized}
The \emph{individualized outcomes} of $G$ when Left plays first and when Right plays first are recursively defined by
\[o^L(G)=  \begin{cases}
\mathbf{L}, \quad\textrm{ if }G=\infty\textrm{ or }\exists G^L\in G^\mathcal{L}\textrm{ such that }o^R(G^L)=\mathbf{L};\\
\mathbf{R}, \quad\textrm{ otherwise.}
\end{cases}
\]
\[o^R(G)=  \begin{cases}
\mathbf{R}, \quad\textrm{ if }G=\overline{\infty}\textrm{ or }\exists G^R\in G^\mathcal{R}\textrm{ such that }o^L(G^R)=\mathbf{R};\\
\mathbf{L}, \quad\textrm{ otherwise.}
\end{cases}
\]
\end{definition}

\begin{observation}
$\mathbf{L}$ and $\mathbf{R}$ mean ``Left wins'' and ``Right wins'', respectively. Note that Definition \ref{def:individualized} formalizes the idea of \emph{optimal play} in that, having that move at their disposal, a player chooses a winning move. In CGT, the number of moves in which a player wins a game is not relevant; what matters is winning. In other words, speed is irrelevant.
\end{observation}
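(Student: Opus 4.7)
The plan is to justify this observation by formalizing the intuitive statement ``Left has a winning strategy playing first'' as a predicate on affine games, and then showing by structural induction that this predicate coincides with the condition $o^L(G) = \mathbf{L}$ given in Definition \ref{def:individualized} (and symmetrically for $o^R$). Since Observation is really an interpretive remark rather than a standalone theorem, the ``proof'' amounts to unpacking the recursion and checking that it captures the right game-theoretic meaning.

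First I would set up a predicate $W^L(G)$ meaning ``from position $G$ with Left to move, Left wins the game under some strategy,'' defined recursively as follows: $W^L(\infty)$ holds (game has already terminated in Left's favor); $W^L(\overline{\infty})$ fails (Left has lost); and for $G$ with nonempty option sets, $W^L(G)$ holds iff there exists $G^L \in G^\mathcal{L}$ such that $W^R(G^L)$, where $W^R$ is the analogous Right-to-move predicate requiring that for every Right option $G^R$, $W^L(G^R)$ holds (equivalently, Left defeats Right no matter how Right responds). This is the standard game-theoretic encoding of ``Left, moving first, can force a win.'' The induction is on the formal birthday $\tilde{b}(G)$, using that every option of $G$ has strictly smaller formal birthday by Definition \ref{def:recursion}.

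Next, I would prove by simultaneous induction that $W^L(G) \Leftrightarrow o^L(G) = \mathbf{L}$ and $W^R(G) \Leftrightarrow o^R(G) = \mathbf{L}$. The base case $G \in \{\infty, \overline{\infty}\}$ is immediate by inspection of Definition \ref{def:individualized}. For the inductive step, the clause ``$\exists G^L \in G^\mathcal{L}$ with $o^R(G^L) = \mathbf{L}$'' in the definition of $o^L(G)$ translates directly, via the induction hypothesis, to ``$\exists G^L$ with $W^R(G^L)$'' -- which is exactly the existential requirement in $W^L$. The ``otherwise'' branches collapse to the universal statement on the opponent's side, matching $W^R$'s definition. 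Thus the recursive definition of $o^L, o^R$ is precisely the encoding of optimal play: a player wins whenever a winning option exists, and the recursion automatically selects one because only existence is tested.

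The ``speed is irrelevant'' clause then follows as a corollary of the shape of the recursion: $o^L$ and $o^R$ take values only in $\{\mathbf{L}, \mathbf{R}\}$, with no counter tracking how many moves remain before a terminating position is reached. Any witness $G^L$ with $o^R(G^L) = \mathbf{L}$ suffices, regardless of how long the resulting play continues. The main (and only) obstacle is the semantic one -- pinning down what ``Left wins $G$'' means before any notion of game tree or strategy has been formalized in the text; once $W^L, W^R$ are introduced as above, everything reduces to a routine two-sided induction on $\tilde{b}(G)$, legitimized by Definition \ref{def:recursion}.
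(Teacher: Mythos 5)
Your proposal is correct: the paper gives no proof for this statement because it is an interpretive remark rather than a theorem, and your formalization of ``Left wins playing first'' as a predicate $W^L$ defined by backward induction, shown equivalent to $o^L(G)=\mathbf{L}$ by induction on the formal birthday $\tilde{b}(G)$, is exactly the standard justification the authors leave implicit. Your observation that the recursion tests only existence of a winning option, with no move counter, likewise correctly accounts for the ``speed is irrelevant'' clause.
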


\begin{definition}[Outcomes]\label{def:outcomes}
The {\em combined outcome}, or simply the \emph{outcome} of a game $G$, is the pair $(o^L(G),o^R(G))$ which can be one of the four possibilities $(\mathbf{L},\mathbf{L})$, $(\mathbf{L},\mathbf{R})$, $(\mathbf{R},\mathbf{L})$, and $(\mathbf{R},\mathbf{R})$. Traditionally, these four possibilities are denoted as $\mathscr{L}$, $\mathscr{N}$, $\mathscr{P}$, and $\mathscr{R}$.
\end{definition}

From Left's perspective\footnote{In CGT, Left is the ``positive player'', and Right is the ``negative player'': Left always aims for the highest possible, while Right seeks the lowest possible. Given that in two-player games, each pulls in their direction, this convention aligns with game practice.}, $\mathscr{L}$ is the best outcome (she wins, regardless of whether playing first or second) and $\mathscr{R}$ is the worst (she loses, regardless of whether playing first or second). On the other hand, regarding $\mathscr {N}$ and $\mathscr{P}$, the victory depends on playing first or second, so these outcomes are not comparable. These considerations explain how the partial order of outcomes is conventionally established:
\begin{center}
\begin{tikzpicture}[node distance=1cm]
  \node (L) at (0,2) {$\mathscr{L}$};
  \node (P) at (-1,1) {$\mathscr{P}$};
  \node (N) at (1,1) {$\mathscr{N}$};
  \node (R) at (0,0) {$\mathscr{R}$};

  \draw (L) -- (P);
  \draw (L) -- (N);
  \draw (P) -- (R);
  \draw (N) -- (R);
\end{tikzpicture}
 \end{center}

 The outcome function $o(G)$ is used to denote the outcome of $G$. The {\em outcome classes} $\L, \N, \R, \P$ are the sets of all games with the indicated outcome, so that we can write $G\in \L$ when $o(G)=\mathscr{L}$.

 In accordance with what has already been mentioned, when Left wins the game, the last move is a move to $\infty$, made by one of the two players. Of course, Right only makes this move when he has no other option. Analogously, when Right wins the game, the last move is a move to $\overline{\infty}$.

 With a partial order of outcomes defined, it makes sense to write $o(G)=o(H)$, $o(G)\geqslant o(H)$ or $o(G)\,\|\, o(H)$. Up to this point, the symbol $=$ has been used exclusively to mean ``by definition''; from now on, it will also be used to indicate the equality of outcomes. The distinction will be made by the context.

 With the concept of outcome established, it is now possible to define the equivalence and partial order of games.

\begin{definition}[Equivalence and Order]\label{def:orderequivalence}
Let $G$ and $H$ be affine games. We have the following:
\begin{itemize}
  \item $G\sim H  \textrm{ if and only if } o(G+X)= o(H+X) \textrm{ for all games } X\in \mathbb{A}\setminus \{\infty,\overline{\infty}\}.$
  \item $G\gtrsim H  \textrm{ if and only if } o(G+X)\geqslant o(H+X) \textrm{ for all games } X\in \mathbb{A}\setminus \{\infty,\overline{\infty}\};$
\end{itemize}
\end{definition}

\begin{observation}
The relation $G\gtrsim H$ means that replacing $H$ by $G$ can never hurt Left, no matter what the disjunctive sum is; the relation $G\sim H$ means that replacing $H$ by $G$ can never hurt Left or Right in any disjunctive sum.
\end{observation}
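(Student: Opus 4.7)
The plan is to verify that the universal quantifier over $X$ in Definition~\ref{def:orderequivalence} captures every situation in which $H$ might appear as a summand of a larger disjunctive sum, so that the definitional outcome comparison matches the intuitive statement that ``replacing $H$ by $G$ cannot hurt Left'' (respectively, either player, for $\sim$).

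First, I would take an arbitrary disjunctive sum $Y$ containing $H$, say $Y = H + W_1 + \cdots + W_k$, and assume the sum is defined. Using Theorem~\ref{thm:almostmonoid} (associativity and commutativity of $+$ up to equality), I would regroup as $Y = H + X$ where $X = W_1 + \cdots + W_k$. When $X \notin \{\infty, \overline{\infty}\}$, Definition~\ref{def:orderequivalence} applies directly: $G \gtrsim H$ gives $o(G + X) \geqslant o(H + X) = o(Y)$, so the substituted position $G + X$ has outcome no worse for Left than $Y$. The same argument with equality in place of inequality yields the claim for $\sim$, and there it concerns \emph{both} players because $o(G+X) = o(H+X)$ forces agreement of both $o^L$ and $o^R$.

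The main (minor) obstacle is handling the corner cases $X \in \{\infty, \overline{\infty}\}$ excluded from Definition~\ref{def:orderequivalence}. In those cases the sums $H + X$ and $G + X$ are either both absorbed by the infinity in $X$ (so their outcomes coincide and the substitution is vacuously harmless) or one of them is undefined (so the substitution is not meaningful in the first place, since the underlying sum $Y$ itself is undefined). A brief case analysis using Definition~\ref{def:disjunctive} dispatches these sub-cases. Overall the argument is little more than a restatement of the definition, with a light application of Theorem~\ref{thm:almostmonoid} to put an arbitrary sum containing $H$ into the shape $H + X$; no induction on the formal birthday is needed.
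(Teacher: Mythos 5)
The paper offers no proof of this Observation at all: it is stated as an interpretive gloss on Definition~\ref{def:orderequivalence}, so there is nothing to compare against except the definition itself. Your unfolding is exactly the intended reading, and the use of Theorem~\ref{thm:almostmonoid} to regroup an arbitrary sum containing $H$ into the shape $H+X$ is the right (and only) non-trivial step; the argument is correct in substance. One small imprecision in your corner-case discussion: when $X=\infty$ it is not true that an undefined substituted sum forces the original sum to be undefined. Take $H\neq\overline{\infty}$ with $H\sim\overline{\infty}$ (e.g.\ $H=\{\overline{\infty}\,|\,\overline{\infty}\}$) and $G=\overline{\infty}$; then $H+\infty=\infty$ is defined while $G+\infty$ is not, so replacing $H$ by $G$ turns a defined sum into an undefined one even though $G\gtrsim H$. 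This is precisely the pathology that the exclusion of $X\in\{\infty,\overline{\infty}\}$ in Definition~\ref{def:orderequivalence} is designed to sidestep, and it is why the informal slogan ``no matter what the disjunctive sum is'' must tacitly be read as ranging over contexts in which no terminating move has already been played; with that caveat made explicit, your verification is complete.
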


\begin{theorem}\label{thm:relations} The relations $\sim$ and $\gtrsim$ are, respectively, an equivalence relation and a partial order relation. \end{theorem}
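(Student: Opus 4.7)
The plan is to derive Theorem~\ref{thm:relations} directly from the corresponding properties of $=$ and $\geqslant$ on the four-element poset of outcomes $\{\mathscr{L}, \mathscr{N}, \mathscr{P}, \mathscr{R}\}$ displayed just before Definition~\ref{def:orderequivalence}. Since $\sim$ and $\gtrsim$ on affine games are defined pointwise by quantifying an outcome condition over all admissible test games $X$, their algebraic properties should lift automatically from the corresponding properties of the outcome relations.

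First I would record, by inspection of the Hasse diagram, that $\geqslant$ on the four outcomes is reflexive, antisymmetric, and transitive ($\mathscr{L}$ on top, $\mathscr{R}$ at the bottom, with $\mathscr{N}$ and $\mathscr{P}$ incomparable), and that equality on the same set is an equivalence relation. These are finite checks that I would state but not belabour.

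For $\gtrsim$: reflexivity follows from $o(G+X) \geqslant o(G+X)$ for every admissible $X$; transitivity follows because, assuming $G \gtrsim H$ and $H \gtrsim J$, for each admissible $X$ the chain $o(G+X) \geqslant o(H+X) \geqslant o(J+X)$ collapses by transitivity of $\geqslant$ on outcomes; and antisymmetry (up to $\sim$) follows because $G \gtrsim H$ together with $H \gtrsim G$ forces $o(G+X) = o(H+X)$ for every admissible $X$, i.e.\ $G \sim H$. The three properties needed for $\sim$ are obtained by the same pointwise lifting from the corresponding properties of equality on outcomes; equivalently, one observes that $G \sim H$ if and only if $G \gtrsim H$ and $H \gtrsim G$, reducing everything to the $\gtrsim$ argument just given.

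The only point that requires any care, and the closest thing to an obstacle, is to confirm that the outcome comparisons appearing in the definitions are never ill-posed, i.e.\ that $G+X$ and $H+X$ are well-defined for every $X \in \mathbb{A} \setminus \{\infty, \overline{\infty}\}$. By Definition~\ref{def:disjunctive}, a disjunctive sum is undefined only when one summand is $\infty$ and the other is $\overline{\infty}$. Since $X$ is neither of these, $G+X$ and $H+X$ are always defined, even when $G$ or $H$ is itself an infinity, so the restriction in Definition~\ref{def:orderequivalence} is exactly calibrated to make the arguments above go through. I therefore expect no deeper issue to arise; the proof should amount to routine bookkeeping.
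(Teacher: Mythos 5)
Your proposal is correct and follows essentially the same route as the paper: both lift reflexivity, (anti)symmetry, and transitivity pointwise from the outcome poset by chaining the relations $o(G+X) \geqslant o(H+X)$ over all admissible $X$, with antisymmetry of $\gtrsim$ landing on $G \sim H$. Your added remark that $G+X$ is always defined because $X$ is never an infinity is a harmless extra check that the paper leaves implicit.
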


\begin{proof}
The facts that $\sim$ is reflexive and symmetric are direct consequences of Definition~\ref{def:orderequivalence}. Suppose now that $G\sim H$ and $H\sim J$. For every $X\in \mathbb{A}\setminus \{\infty,\overline{\infty}\}$, it is a consequence of $G\sim H$ that $o(G+X)= o(H+X)$. On the other hand, it is a consequence of $H\sim J$ that $o(H+X)= o(J+X)$. Consequently, as we have $o(G+X)=o(H+X)= o(J+X)$, it follows that $G\sim J$ and $\sim $ is also transitive.

The fact that $\gtrsim$ is reflexive is a direct consequence of Definition \ref{def:orderequivalence}. Suppose now that $G\gtrsim H$ and $H\gtrsim G$. For every $X\in \mathbb{A}\setminus \{\infty,\overline{\infty}\}$, it is a consequence of $G\gtrsim H$ that $o(G+X)\geqslant o(H+X)$. On the other hand, it is a consequence of $H\gtrsim G$ that $o(H+X)\geqslant o(G+X)$. Consequently, as we must have $o(G+X)= o(H+X)$, it follows that $G\sim H$ and $\gtrsim$ is antisymmetric. Finally, suppose that $G\gtrsim H$ and $H\gtrsim J$. For every $X\in \mathbb{A}\setminus \{\infty,\overline{\infty}\}$, it is a consequence of $G\gtrsim H$ that $o(G+X)\geqslant o(H+X)$. On the other hand, it is a consequence of $H\gtrsim J$ that $o(H+X)\geqslant o(J+X)$. Thus, as we have $o(G+X)\geqslant o(H+X)\geqslant o(J+X)$, it follows that $G\gtrsim J$ and $\gtrsim$ is transitive.
\end{proof}

\noindent
\textbf{Notation}: From now on, the structure $(\mathbb{A},+,\sim,\gtrsim)$ is simply designated as $\Np$.  For ease, we will use the symbol $\geqslant$ instead of $\gtrsim$, and $=$ instead of $\sim$; different situations determine if the symbols are being used for definitions, outcomes or games. Furthermore, $G>H$ means that $G\geqslant H$ and $G\neq H$. When two games have exactly the same game form, they are \emph{isomorphic}, denoted as $G\cong H$. Naturally, if $G$ is isomorphic to $H$, $G$ is equal to $H$ in terms of game equivalence, but the opposite direction may not be true.

\section{Fundamental Theorem of Normal Play and the partial order of games}
\label{sec:ftanp}

We begin this section dedicated to some initial results on the partial order of games by clarifying what is the identity of $\Np$ with respect to the disjunctive sum.

\begin{theorem}\label{thm:identity} The game $\{\overline{\infty}\,|\,\infty\}$, denotated as $0$, is the identity of $\Np$. \end{theorem}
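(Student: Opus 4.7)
The plan is to reduce the claim $G + 0 \sim G$ for every $G \in \mathbb{A}$ to a single outcome-preservation statement, then prove that statement by structural induction, exploiting the fact that the two options of $0$, namely $\overline{\infty}$ for Left and $\infty$ for Right, are suicidal moves.

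First I would dispose of the two boundary cases. If $G = \infty$, then by Definition~\ref{def:disjunctive} (applied to a sum where one summand is $\infty$ and neither is $\overline{\infty}$) we get $G + 0 = \infty$ literally as a game form, so $G + 0 \sim G$ trivially; the case $G = \overline{\infty}$ is symmetric. So assume $G \notin \{\infty, \overline{\infty}\}$. By Definition~\ref{def:orderequivalence} it suffices to verify $o((G + 0) + X) = o(G + X)$ for every $X \in \mathbb{A} \setminus \{\infty, \overline{\infty}\}$. Since none of $G$, $0$, $X$ is an infinity, Theorem~\ref{thm:almostmonoid} legitimizes the rewriting $(G + 0) + X = (G + X) + 0$, and setting $Y := G + X$ reduces the whole theorem to the following.

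\emph{Main claim:} for every $Y \in \mathbb{A} \setminus \{\infty, \overline{\infty}\}$, we have $o^L(Y + 0) = o^L(Y)$ and $o^R(Y + 0) = o^R(Y)$.

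I would prove the main claim by induction on $\tilde{b}(Y)$. Expanding via Definition~\ref{def:disjunctive},
\[
Y + 0 \;=\; \{\,Y^\mathcal{L} + 0,\; \overline{\infty} \,\mid\, Y^\mathcal{R} + 0,\; \infty\,\},
\]
where the spurious Left option $\overline{\infty}$ and Right option $\infty$ arise from the two options of the $0$ component. These spurious options cannot affect the outcome: by Definition~\ref{def:individualized} one has $o^R(\overline{\infty}) = \mathbf{R}$ and $o^L(\infty) = \mathbf{L}$, so the extra Left option fails to witness $o^L(Y+0) = \mathbf{L}$, and dually for Right. For each remaining option $Y^L + 0$, either $Y^L$ is already an infinity, in which case $Y^L + 0 = Y^L$ by direct computation, or the inductive hypothesis yields $o^R(Y^L + 0) = o^R(Y^L)$. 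In either case, the existential quantifier in Definition~\ref{def:individualized} evaluating $o^L(Y+0)$ is satisfied exactly when the corresponding existential evaluating $o^L(Y)$ is, and the Right-perspective computation is dual.

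The only real obstacle I anticipate is the bookkeeping for options that happen to equal $\infty$ or $\overline{\infty}$ during the inductive step, since the inductive hypothesis does not apply to those games. This is handled cleanly by the two identities $\infty + 0 = \infty$ and $\overline{\infty} + 0 = \overline{\infty}$ supplied directly by Definition~\ref{def:disjunctive}. Once that accounting is in place, the proof amounts to the intuitive observation that $0$ offers no useful move to either player, so the outcome of $Y + 0$ is determined entirely by the $Y$-component.
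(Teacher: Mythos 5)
Your proposal is correct and rests on the same key observation as the paper's proof, namely that the two options of $\{\overline{\infty}\,|\,\infty\}$ are suicidal and therefore can never be profitably used, so the outcome of the sum is governed entirely by the other component. The only difference is presentational: the paper phrases this as a strategy-mimicry argument on $G+X$ versus $G+X+\{\overline{\infty}\,|\,\infty\}$, whereas you formalize the same idea as a structural induction on the outcome functions $o^L$ and $o^R$ via the expansion $Y+0=\{Y^\mathcal{L}+0,\overline{\infty}\mid Y^\mathcal{R}+0,\infty\}$, which is a perfectly valid (and arguably more rigorous) rendering of the same argument.
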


\begin{proof}
According to Theorem \ref{def:disjunctive}, $G+\{\overline{\infty}\,|\,\infty\}$ commutes, thus it suffices to prove that $G+\{\overline{\infty}\,|\,\infty\}=G$.

If $G=\infty$, according to Definition \ref{def:disjunctive}, $G+\{\overline{\infty}\,|\,\infty\}=G=\infty$. If $G=\overline{\infty}$, according to Definition \ref{def:disjunctive}, $G+\{\overline{\infty}\,|\,\infty\}=G=\overline{\infty}$. Hence, assume that $G\neq\infty$ and $G\neq\overline{\infty}$. Let $X\in\Np\setminus\{\infty,\overline{\infty}\}$.

Suppose that, playing first, a player wins $G+X$. Without loss of generality, assume that the player is Left, and she wins with some option $(G+X)^L$. Then, she also wins $G+X+\{\overline{\infty}\,|\,\infty\}$ with the option $(G+X)^L+\{\overline{\infty}\,|\,\infty\}$. Essentially, she mimics the strategy used when $G+X$ is played alone, since neither player can make a move in $\{\overline{\infty}\,|\,\infty\}$ without immediately leading to defeat. On the other hand, if, playing first, Left wins $G+X+\{\overline{\infty}\,|\,\infty\}$, this must be done with an option $(G+X)^L+\{\overline{\infty}\,|\,\infty\}$. She also wins $G+X$ with the option $(G+X)^L$, since, once more, one can never touch the component $\{\overline{\infty}\,|\,\infty\}$, and the mimicry can be employed again. The argument is entirely analogous from Right's perspective, thus $o(G+X+\{\overline{\infty}\,|\,\infty\})= o(G+X)$, implying $G+\{\overline{\infty}\,|\,\infty\}=G$.
\end{proof}

The next theorem follows directly from the absorbing nature of the infinities.

 \begin{theorem}\label{thm:inflarge}
If $G\in\Np$, then $\infty\geqslant G$ and $G\geqslant\overline{\infty}$.
\end{theorem}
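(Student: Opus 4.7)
The plan is to unpack Definition~\ref{def:orderequivalence} and reduce the statement to the trivial observation that the outcomes of $\infty$ and $\overline{\infty}$ are the maximum and minimum elements of the outcome lattice, respectively. Specifically, I need to show that for every $X\in\mathbb{A}\setminus\{\infty,\overline{\infty}\}$, one has $o(\infty+X)\geqslant o(G+X)$ and $o(G+X)\geqslant o(\overline{\infty}+X)$.

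The first step is to invoke Definition~\ref{def:disjunctive}: since $X\neq\overline{\infty}$, the second clause gives $\infty+X=\infty$; symmetrically, since $X\neq\infty$, the third clause gives $\overline{\infty}+X=\overline{\infty}$. So the two outcome comparisons reduce to $o(\infty)\geqslant o(G+X)$ and $o(G+X)\geqslant o(\overline{\infty})$.

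The second step is to compute $o(\infty)$ and $o(\overline{\infty})$ directly from Definition~\ref{def:individualized}. For $\infty$: the defining clause for $o^L$ immediately gives $o^L(\infty)=\mathbf{L}$; for $o^R(\infty)$, we have $\infty\neq\overline{\infty}$ and $\infty$ has no Right options (it is a base-case game from $\mathbb{A}_{-1}$), so the ``otherwise'' clause yields $o^R(\infty)=\mathbf{L}$. Hence $o(\infty)=(\mathbf{L},\mathbf{L})=\mathscr{L}$. A symmetric argument gives $o(\overline{\infty})=\mathscr{R}$.

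The final step is to observe that in the outcome Hasse diagram displayed just after Definition~\ref{def:outcomes}, $\mathscr{L}$ sits at the top and $\mathscr{R}$ sits at the bottom, so $\mathscr{L}\geqslant o(G+X)$ and $o(G+X)\geqslant\mathscr{R}$ hold for any game $G+X$ whatsoever. There is no real obstacle here; the statement is essentially a sanity check that the absorbing nature of the infinities (already built into Definition~\ref{def:disjunctive}) translates into order-theoretic extremality. The only subtlety worth flagging in the write-up is that $G+X$ is guaranteed to be defined because $X$ is forbidden from being either infinity, so the comparisons are well-posed for every $G\in\Np$, including $G=\infty$ and $G=\overline{\infty}$ themselves.
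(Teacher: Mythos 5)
Your proposal is correct and follows essentially the same route as the paper: use Definition~\ref{def:disjunctive} to collapse $\infty+X$ to $\infty$ (and $\overline{\infty}+X$ to $\overline{\infty}$), compute $o(\infty)=\mathscr{L}$ and $o(\overline{\infty})=\mathscr{R}$, and conclude by the extremality of these outcomes in the outcome order. The extra care about well-definedness of $G+X$ and the explicit computation of $o^R(\infty)$ via the ``otherwise'' clause are fine refinements of the same argument.
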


\begin{proof}
If $X\in \Np\setminus\{\infty,\overline{\infty}\}$ then, according to Definition \ref{def:disjunctive}, $\infty+X=\infty$. Hence, by Definition \ref{def:individualized}, we have $o(\infty+X)=o(\infty)=\mathscr{L}$. Thus, for every $X\in \Np\setminus\{\infty,\overline{\infty}\}$,  we have $o(\infty+X)\geqslant o(G+X)$, implying $\infty\geqslant G$. Proving that $G\geqslant\overline{\infty}$ is analogous.
\end{proof}

Regarding the classical structure $\Npc$, it is a well-known fact that $G\geqslant 0$ if and only if $G\in\L\cup\P$ -- here, we refer to this result as the Fundamental Theorem of Normal Play. The ultimate reason for this theorem is that if Left has a winning strategy in a game $X$, she also has it in $G+X$. She can use a \emph{local response strategy}, meaning that in $G+X$, Left responds to Right's moves in a component with a move in that same component, as if she were playing it in isolation. Under the normal play convention, Left makes the last move in both components and, consequently, in the disjunctive sum as a whole. The following theorem states that this result remains valid in $\Np$, providing yet another reason to support the choice of the name ``affine normal play''.

\begin{theorem}[Fundamental Theorem of Affine Normal Play]\label{thm:ftnp}
Let $G\in \Np$. We have that $G\geqslant 0$ if and only if $G\in \mathcal{L}\cup\mathcal{P}$.
\end{theorem}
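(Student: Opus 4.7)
The forward direction is a direct specialization of the defining inequality: the plan is simply to plug $X = 0$ into $G \geqslant 0$, using Theorem~\ref{thm:identity} to identify $0$ with $\{\overline{\infty}\,|\,\infty\} \in \mathbb{A}\setminus\{\infty,\overline{\infty}\}$. This yields $o(G) = o(G+0) \geqslant o(0) = \mathscr{P}$, and since the only outcomes dominating $\mathscr{P}$ are $\mathscr{P}$ itself and $\mathscr{L}$, we conclude $G \in \mathcal{L}\cup\mathcal{P}$.

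For the converse, I would strengthen the target statement and prove by simultaneous induction on the total formal birthday $\tilde{b}(G)+\tilde{b}(X)$ the following two coupled claims:
\begin{itemize}
\item[(A)] If $G \in \mathcal{L}\cup\mathcal{P}$, $X \in \mathcal{L}\cup\mathcal{P}$, and $G+X$ is defined, then $G+X \in \mathcal{L}\cup\mathcal{P}$.
\item[(B)] If $G \in \mathcal{L}\cup\mathcal{P}$, $X \in \mathcal{L}\cup\mathcal{N}$, and $G+X$ is defined, then $G+X \in \mathcal{L}\cup\mathcal{N}$.
\end{itemize}
Together these handle every case except $o(X) = \mathscr{R}$ (where the required inequality $o(G+X) \geqslant o(X)$ is vacuous), so they imply $G \geqslant 0$.

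The induction is routine. Claim (B) reduces to (A): Left chooses a first move $X \to X^L$ with $X^L \in \mathcal{L}\cup\mathcal{P}$ (such an option exists because $X \in \mathcal{L}\cup\mathcal{N}$, and necessarily $X^L \neq \overline{\infty}$), and (A) applied to the smaller pair $(G, X^L)$ finishes. For (A), Left plays second using the classical \emph{local response} strategy: if Right moves $G \to G^R$, then the assumption $G \in \mathcal{L}\cup\mathcal{P}$ gives $G^R \in \mathcal{L}\cup\mathcal{N}$, and (B) on the strictly smaller pair $(G^R, X)$ supplies a winning Left reply in $G^R + X$; Right's moves inside $X$ are handled symmetrically.

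The main obstacle is cleanly absorbing the edge cases that stem from the non-totality of the disjunctive sum. Three facts keep this manageable: (i) no $G^R$ (respectively $X^R$) can equal $\overline{\infty}$, since $o^L(\overline{\infty}) = \mathbf{R}$ would drag $G$ (respectively $X$) into $\mathcal{N}\cup\mathcal{R}$; (ii) if some option equals $\infty$, then because the other summand is never $\overline{\infty}$, the sum collapses to $\infty$ and Left has already won, so no response need be produced; (iii) the base case $\tilde{b}(G) = -1$ forces $G = \infty$ (as $\overline{\infty} \in \mathcal{R}$), which reduces both (A) and (B) to the trivial $\infty + X = \infty \in \mathcal{L}$, handled directly via Theorem~\ref{thm:inflarge}.
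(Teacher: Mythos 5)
Your proposal is correct and follows essentially the same route as the paper: the forward direction is the same specialization to $X=0$ via the order of outcomes, and the converse is the same local-response (mimicry) strategy, which you have merely formalized as an explicit induction on formal birthdays via the coupled claims (A) and (B). The paper states the pairing strategy informally, whereas your version makes the outcome-class bookkeeping and the infinity edge cases explicit; the content is the same.
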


\begin{proof} Assume that $G\geqslant 0$. We have $0 \in \mathcal{P}$, and so, by order of outcomes, $G\in \mathcal{L}\cup\mathcal{P}$.

Suppose now that $G\in \mathcal{L}\cup\mathcal{P}$. If $G=\infty$, according to Theorem \ref{thm:inflarge}, $G\geqslant 0$; hence, assume $G\neq\infty$ and let $X\in\Np\setminus\{\infty,\overline{\infty}\}$. If, playing first, Left wins $X$ with the option $X^L$, then she also wins $G+X$ with the option $G+X^L$. Essentially, she mimics the strategy used when $X$ is played alone, answering locally when Right plays in $G$. Due to the assumption $G\in \mathcal{L}\cup\mathcal{P}$, this is a winning strategy for Left in $G+X$. If Left, playing second, wins $X$. Then, on $G+X$, she can respond to each of Right's moves locally, with a winning move on the same component, because $G\in \mathcal{L}\cup\mathcal{P}$. Thus Left can win $G+X$ playing second. Consequently, $o(G+X)\geqslant o(X)$, implying $G\geqslant 0$.
\end{proof}

Once again, as is the case in $\Npc$, Theorem \ref{thm:ftnp} is sufficient to prove that there is a total matching between the order relation and the outcome classes.

\begin{corollary}[Order-Outcome  Bijection]\label{cor:ftnp2}
If $G\in \Np$ then
\begin{itemize}
  \item $G>0$ if and only if $G\in \mathcal{L}$;
  \item $G=0$ if and only if $G\in \mathcal{P}$;
  \item $G\cgfuzzy 0$ if and only if $G\in \mathcal{N}$;
  \item $G<0$ if and only if $G\in \mathcal{R}$.
\end{itemize}
\end{corollary}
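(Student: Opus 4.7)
The plan is to derive all four items from Theorem \ref{thm:ftnp} together with its Right-analogue, which I would establish as a preliminary observation: $G\leqslant 0$ if and only if $G\in\mathcal{R}\cup\mathcal{P}$. This dual statement follows by rerunning the argument of Theorem \ref{thm:ftnp} with the roles of Left and Right swapped; no conjugate operation is needed, because the local-response / strategy-stealing argument (together with the absorbing nature of $\overline{\infty}$ in place of $\infty$) is perfectly symmetric in the two players. Once both characterizations are in hand, the proof reduces to elementary set manipulations on outcome classes, using that $\mathcal{L},\mathcal{N},\mathcal{P},\mathcal{R}$ partition $\Np$ as an immediate consequence of Definition \ref{def:outcomes}.

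Specifically, I would argue as follows. For the second bullet, $G=0$ means simultaneously $G\geqslant 0$ and $G\leqslant 0$, so by Theorem \ref{thm:ftnp} and its dual, $G\in(\mathcal{L}\cup\mathcal{P})\cap(\mathcal{R}\cup\mathcal{P})=\mathcal{P}$; conversely, any $G\in\mathcal{P}$ satisfies both characterizations, hence $G=0$. For the first bullet, $G>0$ unpacks as $G\geqslant 0$ and $G\neq 0$, which by the previous bullet is equivalent to $G\in(\mathcal{L}\cup\mathcal{P})\setminus\mathcal{P}=\mathcal{L}$. The fourth bullet is the Right-symmetric statement and is proved identically. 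For the third bullet, $G\cgfuzzy 0$ means by definition that $G\not\geqslant 0$ and $G\not\leqslant 0$, which by the two characterizations translates to $G\notin\mathcal{L}\cup\mathcal{P}$ and $G\notin\mathcal{R}\cup\mathcal{P}$; since the outcome classes partition $\Np$, this forces $G\in\mathcal{N}$, and the converse is immediate by the same partition argument.

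I do not anticipate any serious obstacle. The only point requiring a little care is the dual to Theorem \ref{thm:ftnp}, which must be justified without invoking a conjugation $G\mapsto -G$, as that operation has not yet been introduced in the text (and for terminating games would need the additional convention $-\infty=\overline{\infty}$). Rerunning the local-response argument directly from Right's perspective sidesteps this entirely; everything else is bookkeeping with the four outcome classes.
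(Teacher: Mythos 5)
Your proposal is correct and follows essentially the same route as the paper: both invoke the dual of Theorem \ref{thm:ftnp} (``$G\leqslant 0$ iff $G\in\mathcal{R}\cup\mathcal{P}$''), which the paper likewise states without proof, and then derive all four items by set manipulations using that the outcome classes partition $\Np$. Your explicit remark on how to justify the dual statement by symmetry is a small extra care, not a different argument.
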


\begin{proof} In this proof, we also use the dual version of Theorem~\ref{thm:ftnp}: ``$G\leqslant0$ if and only if $G\in\mathcal{R}\cup\mathcal{P}$''.

Suppose that $G>0$. By Theorem \ref{thm:ftnp}, $ G\in\mathcal{L}\cup\mathcal{P}$. But, we cannot have $G\in\mathcal{P}$, for otherwise $G\in\mathcal{R}\cup\mathcal{P}$ and $G\leqslant 0$. Therefore, $G\in\mathcal{L}$. Conversely, suppose that $G\in\mathcal{L}$. By Theorem \ref{thm:ftnp}, we have $G\geqslant 0$. But, we cannot have $G=0$, for otherwise $G\leqslant 0$, and  $G\in\mathcal{R}\cup\mathcal{P}$. Hence, $G> 0$. Thus, the first equivalence holds.

The proof of the fourth equivalence is analogous.

For the second equivalence, we have that if $G=0$, then $G\geqslant 0 \;\wedge\; G\leqslant 0$. Therefore, $G\in(\mathcal{L}\cup\mathcal{P})\cap (\mathcal{R}\cup\mathcal{P})=\mathcal{P}$.

The third equivalence is a consequence of eliminating all other possibilities.
\end{proof}

The subsequent theorems offer a set of valuable results that relate the partial order of games to the disjunctive sum.

\begin{theorem}\label{thm:compadd}
If $G,H\in \Np$ and $J\in \Np\setminus\{\infty,\overline{\infty}\}$, then $$G\geqslant H\implies G+J\geqslant H+J.$$
\end{theorem}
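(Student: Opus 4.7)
The goal is to show that for every test game $X\in\Np\setminus\{\infty,\overline{\infty}\}$, we have $o((G+J)+X)\geqslant o((H+J)+X)$. My plan is to derive this directly from the hypothesis $G\geqslant H$ by feeding that hypothesis the cleverly-chosen test summand $J+X$, and then applying associativity to rearrange brackets.

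First I would fix an arbitrary $X\in\Np\setminus\{\infty,\overline{\infty}\}$ and form $Y:=J+X$. The key preliminary observation is that $Y\in\Np\setminus\{\infty,\overline{\infty}\}$: since neither $J$ nor $X$ is an infinity, Definition \ref{def:disjunctive} routes the sum into the ``otherwise'' branch, producing a game of the form $\{J^\mathcal{L}+X,J+X^\mathcal{L}\mid J^\mathcal{R}+X,J+X^\mathcal{R}\}$, which is a genuine bracketed form and thus neither $\infty$ nor $\overline{\infty}$. This is the crucial place where the hypothesis $J\neq\infty,\overline{\infty}$ is used.

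Next, because $Y$ is itself non-infinite, the sums $G+Y$ and $H+Y$ are defined for any $G,H\in\Np$ (the only forbidden configuration in Definition \ref{def:disjunctive} requires both $\infty$ and $\overline{\infty}$ as summands, which cannot happen with $Y$ non-infinite). By Theorem \ref{thm:almostmonoid}, both associativity identities $(G+J)+X=G+(J+X)=G+Y$ and $(H+J)+X=H+(J+X)=H+Y$ therefore hold as equalities of affine games. Applying the hypothesis $G\geqslant H$ with test summand $Y$ yields $o(G+Y)\geqslant o(H+Y)$, and substituting the associativity identities gives $o((G+J)+X)\geqslant o((H+J)+X)$. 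Since $X$ was arbitrary, Definition \ref{def:orderequivalence} concludes $G+J\geqslant H+J$.

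The only genuine obstacle is the bookkeeping around infinities: one must verify that $J+X$ is a legitimate test summand for the hypothesis and that both associativity rearrangements are defined even when $G$ or $H$ happens to be $\infty$ or $\overline{\infty}$. Both checks collapse to the single observation that $J+X$ is non-infinite, which is exactly where the assumption $J\in\Np\setminus\{\infty,\overline{\infty}\}$ is indispensable -- dropping it would both invalidate the theorem and break the associativity step.
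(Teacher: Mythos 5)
Your proof is correct and takes essentially the same route as the paper: both fix an arbitrary non-infinite $X$, set $X'=J+X$, note that $X'$ is neither $\infty$ nor $\overline{\infty}$ precisely because $J$ and $X$ are not, and then apply the hypothesis $G\geqslant H$ with test summand $X'$ together with the associativity/commutativity guaranteed by Theorem \ref{thm:almostmonoid}. You merely make explicit the bracket-rearrangement and definedness checks that the paper leaves implicit in its ``that is'' step.
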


\begin{proof}
Consider any $X\in \Np\setminus\{\infty,\overline{\infty}\}$ and let $X'=J+X$. Since $X$ and $J$ are neither $\infty$ nor $\overline{\infty}$, we have that $X'$ is neither $\infty$ nor $\overline{\infty}$.  Definition~\ref{def:orderequivalence} implies $o(G+X')\geqslant o(H+X')$, that is $o(G+J+X)\geqslant o(H+J+X)$. Thus, the arbitrariness of $X$ implies $G+J\geqslant H+J$.
\end{proof}

\begin{theorem}\label{thm:compinvertible}
If $G,H,J\in\Np$ and $J$ is an invertible element, then $$G\geqslant H\iff G+J\geqslant H+J.$$
\end{theorem}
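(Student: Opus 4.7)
The plan is to reduce everything to Theorem~\ref{thm:compadd} and the identity/associativity properties, once we have ruled out the two infinities. The first step is to verify that an invertible $J$ must lie in $\Np\setminus\{\infty,\overline{\infty}\}$. For any $X\in\Np$, the sum $\infty+X$ is either undefined (when $X=\overline{\infty}$) or equals $\infty$ by Definition~\ref{def:disjunctive}; since $\infty\in\mathcal{L}$ while $0\in\mathcal{P}$, Corollary~\ref{cor:ftnp2} forces $\infty\neq 0$, so no $X$ can satisfy $\infty+X=0$. The argument for $\overline{\infty}$ is symmetric. Consequently both $J$ and any inverse $J'$ of $J$ lie in $\Np\setminus\{\infty,\overline{\infty}\}$.

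The forward direction is then immediate: assuming $G\geqslant H$ and using $J\in\Np\setminus\{\infty,\overline{\infty}\}$, Theorem~\ref{thm:compadd} gives $G+J\geqslant H+J$.

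For the reverse direction, suppose $G+J\geqslant H+J$ and let $J'$ be an inverse with $J+J'=0$. Since $J'\in\Np\setminus\{\infty,\overline{\infty}\}$, Theorem~\ref{thm:compadd} applied with $J'$ yields $(G+J)+J'\geqslant (H+J)+J'$. Because $J$ and $J'$ are not infinities, the sums $(G+J)+J'$ and $G+(J+J')$ satisfy the hypotheses of Theorem~\ref{thm:almostmonoid} (no pair of conflicting infinities appears), hence associativity gives $(G+J)+J'=G+(J+J')=G+0$, and by Theorem~\ref{thm:identity} this equals $G$. The same manipulation applies to $H$, so $G\geqslant H$.

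The only subtlety is ensuring that neither the applications of Theorem~\ref{thm:compadd} nor the use of associativity run afoul of the undefined case in Definition~\ref{def:disjunctive}; the preliminary check that invertibility excludes $\infty$ and $\overline{\infty}$ for both $J$ and $J'$ handles this uniformly.
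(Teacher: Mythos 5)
Your proof is correct and follows essentially the same route as the paper: both arguments first observe that invertibility excludes $\infty$ and $\overline{\infty}$ (for $J$ and its inverse), obtain the forward direction from Theorem~\ref{thm:compadd}, and prove the converse by adding the inverse of $J$ and cancelling. The only cosmetic difference is that the paper performs the cancellation inside the definition of $\geqslant$ by choosing the distinguishing game $X'=-J+X$, whereas you apply Theorem~\ref{thm:compadd} a second time and then invoke associativity, the substitution of $J+J'$ by $0$ (which is Theorem~\ref{thm:disjunctiveorder} rather than associativity proper), and Theorem~\ref{thm:identity}.
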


\begin{proof}
Firstly, note that the fact that $J$ is invertible implies that $J$ is neither $\infty$ nor $\overline{\infty}$, since, given the absorbing nature of infinities, there cannot be any element that, when added to these elements, equals $0$. Thus, the implication $G\geqslant H\implies G+J\geqslant H+J$ directly follows from Theorem \ref{thm:compadd}.

Regarding the reciprocal implication, consider any $X\in \Np\setminus\{\infty,\overline{\infty}\}$ and let\linebreak $X'=-J+X$ ($J$ is invertible, i.e. $-J $ exists and $J+(-J)=0$). Since $-J$ is invertible, $-J$ is neither $\infty$ nor $\overline{\infty}$, and so, $X'$ is neither $\infty$ nor $\overline{\infty}$. By Definition~\ref{def:orderequivalence}, $o(G +J+X')\geqslant o(H+J+X')$, that is $o(G+J-J+X)\geqslant o(H+J-J+X)$. Hence, $o(G+X)\geqslant o(H+X)$, and so, given the arbitrariness of $X$, we have $G\geqslant H$. \end{proof}

The following theorem is highly relevant as it establishes that the comparison between two games, one of which is invertible, can be conducted by playing, exactly in the same manner as in the classical structure. In Section \ref{sec:cembedding}, an elegant characterization of invertible elements will be presented, reinforcing the significance of the theorem.

\begin{theorem}\label{thm:invcomparison}
If $G,H\in \Np$ and $H$ is an invertible element, then $$G\geqslant H\iff G+(-H)\in\mathcal{L}\cup\mathcal{P} \text{ and }G=H\iff G+(-H)\in\mathcal{P}.$$
\end{theorem}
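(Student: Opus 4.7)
The plan is to combine Theorem \ref{thm:compinvertible} (cancellation by invertible elements) with the Fundamental Theorem of Affine Normal Play (Theorem \ref{thm:ftnp}) and its Corollary \ref{cor:ftnp2}. Since $H$ is invertible, the element $-H$ exists, satisfies $H+(-H)=0$, and is itself invertible (with inverse $H$); in particular neither $H$ nor $-H$ equals $\infty$ or $\overline{\infty}$, so all disjunctive sums below are well defined.

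For the first equivalence, I would apply Theorem \ref{thm:compinvertible} with the invertible element $-H$ to obtain
\[
G\geqslant H \iff G+(-H)\geqslant H+(-H)\iff G+(-H)\geqslant 0.
\]
Then by Theorem \ref{thm:ftnp}, $G+(-H)\geqslant 0$ is equivalent to $G+(-H)\in\mathcal{L}\cup\mathcal{P}$, which gives the first statement.

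For the second equivalence, I would argue that $G=H$ if and only if simultaneously $G\geqslant H$ and $H\geqslant G$. Applying Theorem \ref{thm:compinvertible} in both directions yields
\[
G=H \iff G+(-H)\geqslant 0 \text{ and } 0\geqslant G+(-H) \iff G+(-H)=0.
\]
Invoking Corollary \ref{cor:ftnp2}, the condition $G+(-H)=0$ is equivalent to $G+(-H)\in\mathcal{P}$, which finishes the proof.

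There is essentially no obstacle here: the theorem is a direct corollary of the two pillars already established, namely the cancellation law for invertible summands and the order-outcome bijection. The only points requiring a line of care are verifying that $-H$ is invertible (so that Theorem \ref{thm:compinvertible} can be applied in both directions for the equality statement) and observing that $H\neq\infty,\overline{\infty}$, which is automatic because the infinities are absorbing and hence non-invertible.
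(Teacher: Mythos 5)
Your proposal is correct and follows essentially the same route as the paper: cancel by the invertible element $-H$ via Theorem~\ref{thm:compinvertible}, then translate $G+(-H)\geqslant 0$ and $G+(-H)=0$ into outcome classes via Theorem~\ref{thm:ftnp} and Corollary~\ref{cor:ftnp2}. Your explicit remarks that $-H$ is itself invertible and that $H\neq\infty,\overline{\infty}$ are points the paper leaves implicit, but they do not change the argument.
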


\begin{proof} By Theorem \ref{thm:compinvertible}, $G\geqslant H \iff G+(-H) \geqslant H+(-H)$. Therefore, we have $G\geqslant H\iff G+(-H)\geqslant 0$. By Theorem \ref{thm:ftnp}, this leads to $G\geqslant H\iff G+(-H) \in \mathcal{L} \cup \mathcal{P}$. Finally, $G=H\iff G+(-H)\in\mathcal{P}$, since $G\geqslant H$ and $H\leqslant G$.
\end{proof}

The two following theorems are straightforward consequences of the fact that the order and equivalence of affine games constitute a order relation and an equivalence relation.

\begin{theorem}\label{thm:disjunctiveorder}
If $G,H,J,W\in \Np\setminus\{\infty,\overline{\infty}\}$, then we have the following:
\begin{itemize}
  \item $G\geqslant H\text{ and }J\geqslant W \implies G+J\geqslant H+W$;
  \item $G=H\text{ and }J=W \implies G+J=H+W$.
\end{itemize}
\end{theorem}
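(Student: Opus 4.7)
The plan is to obtain both items essentially for free from Theorem \ref{thm:compadd} (one-sided addition preserves order) combined with the properties of the relation $\geqslant$ established in Theorem \ref{thm:relations} (transitivity and antisymmetry), together with commutativity of the disjunctive sum in the infinity-free case from Theorem \ref{thm:almostmonoid}. Since all of $G,H,J,W$ lie in $\Np\setminus\{\infty,\overline{\infty}\}$, every sum encountered in the argument is well-defined and commutative, which removes the only genuine pitfall in the argument.

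For the first item, I would first apply Theorem \ref{thm:compadd} with the hypothesis $G\geqslant H$ and the addend $J$ (legal since $J\neq\infty,\overline{\infty}$), yielding $G+J\geqslant H+J$. Then, applying Theorem \ref{thm:compadd} again with the hypothesis $J\geqslant W$ and the addend $H$ (legal since $H\neq\infty,\overline{\infty}$), together with commutativity, gives $H+J\geqslant H+W$. Chaining these via transitivity of $\geqslant$ from Theorem \ref{thm:relations} yields $G+J\geqslant H+W$, as desired.

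For the second item, I would unpack $G=H$ as the pair of inequalities $G\geqslant H$ and $H\geqslant G$, and similarly for $J=W$. Applying the first item to $(G\geqslant H,\ J\geqslant W)$ gives $G+J\geqslant H+W$; applying it in the reverse direction to $(H\geqslant G,\ W\geqslant J)$ gives $H+W\geqslant G+J$. Antisymmetry of $\geqslant$ (again Theorem \ref{thm:relations}) then yields $G+J=H+W$.

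There is no real obstacle here; the only thing to be careful about is that Theorem \ref{thm:compadd} requires the added summand to avoid $\{\infty,\overline{\infty}\}$, which is guaranteed by the hypothesis that all four games lie in $\Np\setminus\{\infty,\overline{\infty}\}$. No induction on birthdays or strategy-stealing argument is needed, which is why the authors describe the result as a straightforward consequence of the order/equivalence structure.
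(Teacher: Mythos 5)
Your proof is correct and follows exactly the paper's argument: two applications of Theorem \ref{thm:compadd} chained by transitivity for the first item, and the second item deduced from the first via antisymmetry. The only difference is that you make explicit the commutativity step and the well-definedness of the sums, which the paper leaves implicit.
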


\begin{proof} Starting with the first implication, it follows from Theorem \ref{thm:compadd} that $G+J\geqslant H+J$ and $H+J\geqslant H+W$. Next, it follows from the fact that we have an order relation that $G+J\geqslant H+W$. The second implication is a trivial consequence of the first one.
\end{proof}

\begin{theorem}\label{thm:strict}
Let $G,H\in \Np$. If $G> 0$ and $H\geqslant 0$ then $G+H> 0$.
\end{theorem}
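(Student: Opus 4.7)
The plan is to split into the special case where one of $G$, $H$ is infinite and the generic finite case, then use Theorem~\ref{thm:compadd} and Theorem~\ref{thm:ftnp} to obtain $o^R(G+H)=\mathbf{L}$, and finally exhibit an explicit Left first move in $G+H$ to secure $o^L(G+H)=\mathbf{L}$. Combining the two outcome components via Corollary~\ref{cor:ftnp2} gives $G+H\in\mathcal{L}$, i.e.\ $G+H>0$.

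First I would note that $G>0$ and $H\geqslant0$ force $G\neq\overline{\infty}$ and $H\neq\overline{\infty}$, so the disjunctive sum is defined in every subcase. If either $G=\infty$ or $H=\infty$, Definition~\ref{def:disjunctive} gives $G+H=\infty$, and Theorem~\ref{thm:inflarge} (or Corollary~\ref{cor:ftnp2}) yields $G+H>0$. From this point one may assume $G,H\in\Np\setminus\{\infty,\overline{\infty}\}$.

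Under that assumption, Theorem~\ref{thm:compadd} applied with $J=H$ gives $G+H\geqslant 0+H=H$, and transitivity with $H\geqslant0$ yields $G+H\geqslant 0$. By Theorem~\ref{thm:ftnp} this places $G+H$ in $\mathcal{L}\cup\mathcal{P}$, so $o^R(G+H)=\mathbf{L}$. Hence only $o^L(G+H)=\mathbf{L}$ remains to be shown; once established, Corollary~\ref{cor:ftnp2} completes the proof.

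For that last step I would use $G>0\iff G\in\mathcal{L}$, whence $o^L(G)=\mathbf{L}$ and, since $G$ is finite, there exists $G^L\in G^\mathcal{L}$ with $o^R(G^L)=\mathbf{L}$; such a $G^L$ cannot equal $\overline{\infty}$ because $o^R(\overline{\infty})=\mathbf{R}$. If $G^L=\infty$, then $G^L+H=\infty$ (as $H\neq\overline{\infty}$), so trivially $o^R(G^L+H)=\mathbf{L}$. If instead $G^L$ is finite, Theorem~\ref{thm:ftnp} gives $G^L\geqslant 0$, and then Theorem~\ref{thm:compadd} yields $G^L+H\geqslant H\geqslant 0$, so again $o^R(G^L+H)=\mathbf{L}$. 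In either case the Left option $G^L+H$ of $G+H$ witnesses $o^L(G+H)=\mathbf{L}$ via Definition~\ref{def:individualized}. The principal subtlety I foresee is that $H$ need not be invertible, so Theorem~\ref{thm:compinvertible} cannot be used to cancel it; chaining Theorem~\ref{thm:compadd} with the outcome characterisation of Left's first winning move in $G$ bypasses this and avoids any induction on birthdays.
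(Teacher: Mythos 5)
Your proof is correct, but the way you establish strictness differs from the paper's argument. Both proofs dispose of the infinite cases the same way and both first derive $G+H\geqslant 0$ from monotonicity of the sum. The paper then shows $G+H\neq 0$ abstractly: since $G\neq 0$, there is a distinguishing game $X$ with (say) $o^L(G+X)=\mathbf{L}$ while $o^L(X)=\mathbf{R}$, and $H\geqslant 0$ forces $o^L(G+X+H)\geqslant o^L(G+X)=\mathbf{L}$, so the same $X$ distinguishes $G+H$ from $0$. You instead route everything through the Order--Outcome Bijection (Corollary~\ref{cor:ftnp2}): $G>0$ gives $G\in\mathcal{L}$, hence a Left option $G^L$ with $o^R(G^L)=\mathbf{L}$, and you check in both subcases ($G^L=\infty$ or $G^L$ finite, the latter via Theorem~\ref{thm:ftnp} and Theorem~\ref{thm:compadd}) that $G^L+H$ is a winning first move for Left in $G+H$, which together with $o^R(G+H)=\mathbf{L}$ puts $G+H$ in $\mathcal{L}$. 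Your version is more constructive -- it exhibits the winning move and never invokes a distinguishing game -- at the cost of a short case analysis on $G^L$ and a heavier reliance on Corollary~\ref{cor:ftnp2}; the paper's version is shorter but less explicit about how Left actually wins. Both are sound, and your observation that $H$ need not be invertible (so Theorem~\ref{thm:compinvertible} is unavailable) correctly identifies the constraint that shapes both arguments.
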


\begin{proof}
If either $G$ or $H$ is $\infty$, the theorem holds trivially. Assuming that this is not the case, by Theorem~\ref{thm:disjunctiveorder}, we already know that $G+H\geqslant 0$. So, it is enough to show that $G+H\neq 0$.
Since $G> 0$ then, without loss of generality, we may assume that
 $o^L(G+X)=\mathbf{L}$ and $o^L(X)=\mathbf{R}$ for some $X$ different from $\infty$ and $\overline{\infty}$. Because $H\geqslant 0$,
 we have $o^L(G+X+H)\geqslant o^L(G+X+0)=o^L(G+X)=\mathbf{L}$. Since  $o^L(G+H+X)=\mathbf{L}$ and $o^L(X)=\mathbf{R}$, we have $G+H\neq 0$.
\end{proof}

We conclude this section with a very simple theorem that encapsulates the idea that having extra options can never be a disadvantage.

\begin{theorem}[Monotone Principle]\label{thm:greediness}
If $G\in \Np\setminus\{\infty,\overline{\infty}\}$, then, for any $H\in\Np$, we have $\{\GL\cup \{H\}\mid \GR\}\geqslant G$.
\end{theorem}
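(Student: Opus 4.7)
The plan is to apply Definition~\ref{def:orderequivalence} directly. Write $G' := \{\GL\cup\{H\}\mid \GR\}$. For every $X\in\Np\setminus\{\infty,\overline{\infty}\}$, I need to show $o(G'+X)\geqslant o(G+X)$, which, given the Hasse diagram of outcomes, reduces to the two implications: if $o^L(G+X)=\mathbf{L}$ then $o^L(G'+X)=\mathbf{L}$, and if $o^R(G+X)=\mathbf{L}$ then $o^R(G'+X)=\mathbf{L}$. In other words, any winning strategy for Left in $G+X$ (whether she moves first or second) has to be lifted to a winning strategy for Left in $G'+X$.

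The lifting is a standard strategy-mimicking argument, resting on the single observation that $G'$ differs from $G$ only by one additional Left option: Right's option set is literally the same ($\GR$), and every Left option of $G$ is still a Left option of $G'$. Consequently, any Right move in $G'+X$ leads to a position $G^R+X'$ with $G^R\in\GR$ that is identical to the corresponding Right move in $G+X$; and whenever Left's prescribed strategy on $G+X$ tells her to move to $G^L+X'$ with $G^L\in\GL$, she can make the very same move in $G'+X$, since $G^L\in\GL\cup\{H\}$. Moves in summands other than $G$/$G'$ are literally identical in the two games.

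I would then argue, by induction on the formal birthday of $G+X$, that under this copying rule the position reached in $G'+X$ after any sequence of moves coincides with the position reached in the parallel play of $G+X$: the first summand is either untouched on both sides (actual $G'$ vs.\ imagined $G$, which affects nothing because no option has yet been chosen) or has transitioned to a common follower shared by $G$ and $G'$. Crucially, Left never needs to use the extra option $H$, because she is merely replaying a strategy drawn from $G+X$, in which $H$ is not available. Since Left wins the imagined play of $G+X$ by hypothesis, she wins the actual play of $G'+X$, establishing the required outcome inequality for arbitrary~$X$ and hence $G'\geqslant G$.

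The main obstacle is only formal: making precise the correspondence between the actual play and the imagined play. This can be discharged by a routine induction on $\tilde{b}(G)+\tilde{b}(X)$, following the same local-response template already used inside the proof of Theorem~\ref{thm:ftnp}; the dicotic nature of the objects and the fact that $X\neq\infty,\overline{\infty}$ guarantee that the induction does not run into the undefined sum $\infty+\overline{\infty}$, while the case $H\in\{\infty,\overline{\infty}\}$ needs no special treatment because Left simply never selects that option in the simulated play.
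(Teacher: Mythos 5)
Your proposal is correct and follows essentially the same route as the paper: the paper's proof is exactly the strategy-mimicking observation that Left can win $\{\GL\cup\{H\}\mid\GR\}+X$ by sticking to her winning strategy for $G+X$ and never using the extra option $H$. Your additional care in formalizing the correspondence by induction on formal birthdays only fills in details the paper leaves implicit.
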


\begin{proof} Let $G'=\{\GL\cup \{H\}\mid \GR\}$ and consider any $X\in \Np\setminus\{\infty,\overline{\infty}\}$. If Left has a winning strategy in $G+X$ (playing first or second), then Left also has a winning strategy in $G'+X$ since she never has to use the extra option, simply sticking to the winning strategy that exists in $G+X$.
\end{proof}

\section{Options-only comparison}
\label{sec:optionsonly}

As mentioned earlier, it is possible to compare two elements $G$ and $H$ of the classical structure $\Npc$ using only their followers. This is done through an options-only procedure, which consists of playing $G+(-H)$ to check if $G\geqslant H$. It is known that $G\geqslant H$ if and only if, playing second, Left wins $G+(-H)$.

In this section, we will see that the options-only comparison in $\Np$ is not as straightforward. In fact, unlike in the majority of CGT monoids, it is possible to have $G,H\in\Np$ such that $G\geqslant H$, without the maintenance property, as established in Definition \ref{def:maintenance}, being satisfied. This fact is due to the forcing nature of checks and sequences of checks.

Nevertheless, it is possible to obtain an options-only procedure using a weaker version of the maintenance property, in order to adress the effect of the forcing sequences. Thus, we begin this section by presenting an example of two games $G$ and $H$ such that $G\geqslant H$ without the traditional maintenance property being satisfied. Subsequently, still at an intuitive level, we propose the {\sc inquisitor game}, a compound game that allows comparing $G$ with $H$ by playing, using only their followers, as done in the classical structure. We conclude the section with a suitable formalization and rigorous proofs.

\subsection{A challenging example}
\label{ssec:challenging}

\vspace{0.3cm}

Let $G=\left\{*3,\{\infty\,|\,\{0\,|\,\overline{\infty}\},\{*2\,|\,\overline{\infty}\}\}\,\|\,0\right\}$ and $H=\left\{\{1\,|\,\{0\,|\,\overline{\infty}\},\{*2,*3\,|\,\overline{\infty}\}\}\,\|\,0\right\}$. In these game forms, $0$ is the game $\{\overline{\infty}\,|\,\infty\}$, $1$ is the game $\{0\,|\,\infty\}$, and the nimbers are the games defined recursively from $0$ in the traditional way.\\

Claim 1: We have $G\geqslant H$.\\

\emph{Proof of Claim 1}: Let $X\in \Np\setminus\{\infty,\overline{\infty}\}$. If Right wins $G+X$ by moving to $G+X^R$, then, by induction, he wins $H+X$ by moving to $H+X^R$. On the other hand, if Right wins $G+X$ by moving to $0+X$, then the exact same move is available in $H+X$. Therefore, for every $X\in \Np\setminus\{\infty,\overline{\infty}\}$, if Right wins $G+X$ when playing first, then Right also wins $H+X$.\\

If Left wins $H+X$ by moving to $H+X^L$, then, by induction, she wins $G+X$ by moving to $G+X^L$. The sensible option happens when Left wins $H+X$ by moving to $H^L+X$. In that case, she must have a winning move against both Right-checks $\{0\,|\,\overline{\infty}\}+X$ and $\{*2,*3\,|\,\overline{\infty}\}+X$. The first implies that $0+X$ is a winning move for Left, while the second implies that either $*2+X$ or $*3+X$ is a winning move for Left. Hence, we conclude that either both $0+X$ and $*2+X$ are winning moves for Left or both $0+X$ and $*3+X$ are winning moves for Left. In the latter case, Left, playing first, secures a win in $G+X$ by moving to $*3+X$. In the former case, Left, playing first, wins $G+X$ by moving to $\{\infty\,|\,\{0\,|\,\overline{\infty}\},\{*2\,|\,\overline{\infty}\}\}+X$. Thus, for every $X\in \Np\setminus\{\infty,\overline{\infty}\}$, if Left wins $H+X$ when playing first, then she also wins $G+X$.\\

Claim 2: The maintenance property (Definition \ref{def:maintenance}) is not satisfied.\\

\emph{Proof of Claim 2}: Given the games $G$ and $H$, we have\\

$G^{L_1}=*3$, and $G^{L_2}=\{\infty\,|\,\{0\,|\,\overline{\infty}\},\{*2\,|\,\overline{\infty}\}\}$,\\

$H^{LR_1}=\{0\,|\,\overline{\infty}\}$, and $H^{LR_2}=\{*2,*3\,|\,\overline{\infty}\}$.\\

We will show that $G^{L_1}\not\geqslant H^L$, $G^{L_2}\not\geqslant H^L$, $G\not\geqslant H^{LR_1}$, and $G\not\geqslant H^{LR_2}$.\\

$G^{L_1}\not\geqslant H^L$ because, playing first, Left loses $G^{L_1}-1=*3-1$,\\

\vspace{-0.3cm}
\hspace*{\fill} and wins $H^L-1=\{1\,|\,\{0\,|\,\overline{\infty}\},\{*2,*3\,|\,\overline{\infty}\}\}-1.$\\

$G^{L_2}\not\geqslant H^L$ because, playing second, Left loses\\

\vspace{-0.3cm}
\hspace{2.5cm} $G^{L_2}+\{0\,|\,*2\}=\{\infty\,|\,\{0\,|\,\overline{\infty}\},\{*2\,|\,\overline{\infty}\}\}+\{0\,|\,*2\},$\\

\vspace{-0.3cm}
 \hspace*{\fill} and wins $H^L+\{0\,|\,*2\}=\{1\,|\,\{0\,|\,\overline{\infty}\},\{*2,*3\,|\,\overline{\infty}\}\}+\{0\,|\,*2\}.$\\

$G\not\geqslant H^{LR_1}$ because, playing first, Left loses\\

\vspace{-0.3cm}
\hspace{3.5cm}  $G=\left\{*3,\{\infty\,|\,\{0\,|\,\overline{\infty}\},\{*2\,|\,\overline{\infty}\}\}\,\|\,0\right\},$\\

\vspace{-0.3cm}
 \hspace*{\fill} and wins $H^{LR_1}=\{0\,|\,\overline{\infty}\}.$

$G\not\geqslant H^{LR_2}$ because, playing first, Left loses\\

\vspace{-0.3cm}
\hspace{2.5cm}
$G+*2=\left\{*3,\{\infty\,|\,\{0\,|\,\overline{\infty}\},\{*2\,|\,\overline{\infty}\}\}\,\|\,0\right\}+*2,$\\

\vspace{-0.3cm}
 \hspace*{\fill} and wins $H^{LR_2}+*2=\{*2,*3\,|\,\overline{\infty}\}+*2.$

This finishes the proof of Claim 2.\\

\subsection{{\sc inquisitor game}}
\label{ssec:inquisitor}

\vspace{0.3cm}
The results that will be proved next establish that in affine normal play, to check if $G\geqslant H$ ($G$ and $H$ not infinities), it is enough to verify that Left, playing second, wins $G \uplus (-H)$, where $\uplus$ is ``a kind of'' disjunctive sum, but with two extra rules. Just like in a disjunctive sum, players choose one of the two components to play, leaving the other untouched. The two extra rules are related to two specificities of affine normal play, namely the possibility of forcing sequences, and will now be discussed at an intuitive level. Note that $\uplus$  is not the disjunctive sum as defined in Definition \ref{def:disjunctive}; it is a compound operator whose sole purpose is to compare games.

The \emph{conjugate} of a given game switches the roles of the players. In the game that will be proposed, $-H$ is the conjugate of $H$. We will see in Section \ref{sec:cembedding} that $\Np$ does not deviate from the usual rule: when a game is invertible, its inverse is its conjugate.

\begin{definition}[Conjugate]\label{def:conjugate}
The conjugate of $H\in \Np$ is
\[ -H\, =
\begin{cases}
\overline{\infty}, &\mbox{if $H=\infty$}\\
\infty, &\mbox{if $H=\overline{\infty}$}\\
\{-H^\mathcal{R}\,|\,-H^\mathcal{L}\} &\mbox{otherwise},
\end{cases}
\]
where $-H^\mathcal{R}$ denotes the set of games
$-H^R$, for $H^R\in H^\mathcal{R}$, and similarly for $-H^\mathcal{L}$.
\end{definition}

Recall that in order to have $G\geqslant H$, it is necessary to have $o^L(G + X)\geqslant o^L(H + X)$ and $o^R(G + X)\geqslant o^R(H + X)$ for every game $X$ different from $\infty$ and $\overline{\infty}$. In other words, if Left, playing first, wins $H+X$, she must also win $G+X$; if Right, playing first, wins $G+X$, he must also win $H+X$. Often, moves in $X$ are covered by simple induction. Therefore, the sensible moves are in $G$ or in $H$.

Now, suppose we are playing $G \uplus (-H)$, with Right playing first. The first additional rule stipulates that whenever Right moves to $\overline{\infty}$ in one of the components, Left still has \emph{one last opportunity} to respond with $\infty$ in the other component. If Left can still do so on the immediately next move, she wins the game; otherwise, she loses the game. To understand why this rule is imperative, let us assume that Right has an option $G^R=\overline{\infty}$. Of course, playing first, Right wins $G+X$. In this case, for Right to also win $H+X$ for any $X$, it is absolutely necessary that there exists a Right option $H^R=\overline{\infty}$ as well. A symmetrical situation occurs when there is a Left option $H^L=\infty$; it is absolutely necessary that there also exists a Left option $G^L=\infty$. As an example, consider again the infinitely hot game $G=\{\infty\,|\,\overline{\infty}\}$. It is clear that if $H=\{\infty\,|\,\overline{\infty}\}$, then $G\geqslant H$; in fact, in this case, $G$ and $H$ are equal and isomorphic! In the game $G \uplus (-H) = \{\infty\,|\,\overline{\infty}\} \uplus \{\infty\,|\,\overline{\infty}\}$, Left only wins against Right's move to $\overline{\infty} \uplus \{\infty\,|\,\overline{\infty}\}$ due to the extra rule and to the final move to $\overline{\infty} \uplus \infty$.

Understanding the second extra rule at an intuitive level is a bit more delicate and requires some preliminary definitions.

\begin{definition}[Check Games] If $\infty$ is a Left option of a game, then that game is a \emph{Left-check}. A Left option of a game $G$ that is a Left-check is denoted by $G^{\overrightarrow{L}}$. Moreover, a \emph{Right-check} and the notation $G^{\overleftarrow{R}}$ are defined symmetrically.  If a game is either a Left-check or a Right-check, then it may be simply referred to as a \emph{check}.
\end{definition}

\begin{definition}[Quiet Games]\label{def:quiet}
Let $G\in\Np$. If $G\neq \infty$ and $G$ is not a Left-check, then $G$ is \emph{Left-quiet}. The definition of a \emph{Right-quiet} game is similar. If $G$ is Left-quiet and Right-quiet then $G$ is \emph{quiet}.
\end{definition}

\begin{definition}[Silent Games]\label{def:silent}
Let $G\in\Np$. If $G$ is not a Left-check, then $G$ is \emph{Left-silent}. The definition of a \emph{Right-silent} game is done in a similar manner. If $G$ is Left-silent and Right-silent then $G$ is \emph{silent}.
\end{definition}

\begin{observation}
A quiet-game cannot be $\infty$ or $\overline{\infty}$. A silent-game may be $\infty$ or $\overline{\infty}$. A Left option $G^L$ can be either a check or a silent game; a Left option $G^{\overrightarrow{L}}$ is a Left-check.
\end{observation}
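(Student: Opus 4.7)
The plan is to verify each of the three clauses of the observation by directly unpacking the relevant definitions, with no recursion or induction required. For the first clause, that a quiet game cannot be $\infty$ or $\overline{\infty}$: by Definition~\ref{def:quiet}, Left-quiet requires both $G\neq\infty$ and that $G$ not be a Left-check, while Right-quiet requires both $G\neq\overline{\infty}$ and that $G$ not be a Right-check. A quiet game is by definition both Left-quiet and Right-quiet, so it can be neither $\infty$ nor $\overline{\infty}$.

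For the second clause, I would show that both $\infty$ and $\overline{\infty}$ are silent. By Definition~\ref{def:recursion}, these two games live on day $-1$ and have no options at all. In particular, $\infty$ cannot have $\infty$ among its Left options (it has none), so $\infty$ is not a Left-check; the same reasoning shows it is not a Right-check. Hence $\infty$ is both Left-silent and Right-silent, i.e., silent in the sense of Definition~\ref{def:silent}, and the symmetric argument applies to $\overline{\infty}$. This also highlights the key distinction with quietness: silence only excludes being a check, whereas quietness additionally excludes the infinities themselves.

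For the third clause, let $G^L$ be a Left option of some game $G$. Either $G^L$ is a Left-check, in which case it is a check, or it is not, in which case it is Left-silent. If $G^L$ is additionally a Right-check, then it is still a check; otherwise it is also Right-silent and hence silent. This exhaustive dichotomy shows that any Left option falls into one of the two categories ``check'' or ``silent''. The final assertion, that $G^{\overrightarrow{L}}$ is a Left-check, is immediate from the notational convention introduced alongside the definition of check games, where the arrow decoration is reserved precisely for those Left options that happen to be Left-checks.

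The main subtlety, insofar as there is any, lies in keeping careful track of the asymmetric roles played by $\infty$ and $\overline{\infty}$ in the definitions of Left-check versus Right-check, and in not conflating silence (which merely rules out being a check) with quietness (which additionally rules out being an infinity itself). Once these distinctions are kept straight, each clause reduces to a one-line consequence of the surrounding definitions.
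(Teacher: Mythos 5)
Your proposal is correct and follows exactly the direct definitional unpacking that the paper intends: the observation is stated without proof precisely because each clause is an immediate consequence of Definitions~\ref{def:recursion}, \ref{def:quiet}, and \ref{def:silent} together with the notational convention for $G^{\overrightarrow{L}}$. Your careful separation of ``silent'' (excludes only being a check) from ``quiet'' (additionally excludes the infinities) is the one point of substance, and you handle it correctly.
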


Let us suppose that Left, playing first, wins $H+X$ with a move $H^L+X$. It may happen that Right has forcing sequences with checks in the first component, against which Left has to defend. Left's responses at each moment take the form $H^{L\overrightarrow{R}\ldots\overrightarrow{R}L}+X$ and \emph{must be winning responses}, otherwise, the initial move $H^L+X$ would not be a winning move. Right may have many forcing sequences $H^{L\overrightarrow{R_1}\ldots}+X$, $H^{L\overrightarrow{R_2}\ldots}+X$,$\,\ldots$, and Left always must be able to respond victoriously. It is crucial to observe that both players may have a choice. Against each check, Left must chose a winning response; she has a victorious behavior against checks, a strategy of responses. However, \emph{it is Right who may choose the moment to stop giving checks}; he may do so immediately with a silent move $H^{LR}+X$, but he eventually may choose to do so after some checks with a silent move $H^{L\overrightarrow{R}\ldots\overrightarrow{R}LR}+X$.

Now, in order to have $G\geqslant H$, there must also be a winning move for Left in $G+X$. Once again, it can be directly a silent move to $G^L+X$, but it can also be a silent move $G^{\overrightarrow{L}R\ldots\overrightarrow{L}RL}+X$ after a sequence of Left checks and the responses that Right gives, always avoiding a Left winning move on the second component (if Right is unable to respond in that way, no issue is raised). We are now ready to mention the fundamental difference between classical normal play and affine normal play. In the first case, all moves are silent moves. Thus, for each $H^L$, either there is a silent move $H^{LR}$ such that $G\geqslant H^{LR}$ or there is a silent move $G^L$ such that $G^L\geqslant H^L$. In affine normal play, it is Left who interrupts the forcing sequences in $G$, and it is Right who interrupts the forcing sequences in $H^L$. It is sufficient that there is always a pair of games $G^{\overrightarrow{L}R\ldots\overrightarrow{L}R}$ and $H^{L\overrightarrow{R}\ldots\overrightarrow{R}L}$ where the forcing sequences end with a silent move $G^{\overrightarrow{L}R\ldots\overrightarrow{L}R}\geqslant H^{L\overrightarrow{R}\ldots\overrightarrow{R}LR}$ or with a silent move $G^{\overrightarrow{L}R\ldots\overrightarrow{L}RL}\geqslant H^{L\overrightarrow{R}\ldots\overrightarrow{R}L}$, for $G\geqslant H$ to hold. This is a weaker version of the classical maintenance property: \emph{it is not necessary to have immediately $G\geqslant H^{LR}$ or $G^L\geqslant H^L$. Achieving the effect after sequences of checks is sufficient}.

Notice that saying it is Right who chooses the moment to stop giving checks in $H^L$ is the same as saying it is Left who chooses the moment to stop giving checks in $-H^L$. This explains the second extra rule. Let $G'$ and $-H^{L'}$ be followers of $G$ and $-H^L$, respectively. Whenever, in a position $G'\uplus(-H^{L'})$, on her turn, Left has the opportunity to initiate sequences of consecutive checks in either of the two components, \emph{she can inquire about Right's responses}. More precisely, Right must commit to a strategy of responses $(G',\ldots,G'^{\overrightarrow{L}R\ldots\overrightarrow{L}R},\ldots)$ to all forcing sequences from $G'$ and to a strategy of responses $(-H^{L'},\ldots, -H^{L'\overrightarrow{R}L\ldots\overrightarrow{R}L},\ldots)$ to all forcing sequences from $-H^{L'}$. This is done in advance, and, after that, Left may jump directly from $G' \uplus (-H^{L'})$ to some $G'^{\overrightarrow{L}R\ldots\overrightarrow{L}R} \uplus (-H^{L'\overrightarrow{R}L\ldots\overrightarrow{R}L})$ continuing the play in that position with a silent move, which can be $G'^{\overrightarrow{L}R\ldots\overrightarrow{L}RL} \uplus (-H^{L'\overrightarrow{R}L\ldots\overrightarrow{R}L})$ or $G'^{\overrightarrow{L}R\ldots\overrightarrow{L}R} \uplus (-H^{L'\overrightarrow{R}L\ldots\overrightarrow{R}LR})$. Hence, the {\sc inquisitor game} is a compound game $G_1 \uplus G_2$ where

\begin{enumerate}
\item[] (Disjunctive sum rule) Players make their moves as if it were a disjunctive sum;
\item[] (Last opportunity rule) After a move by Right to $\overline{\infty}$ in one of the components, Left has one last opportunity to win by playing to $\infty$ in the other component;
\item[] (Inquiry rule) On her turn, and being able to initiate forcing sequences \emph{in both components}, Left can inquire in advance about Right's behaviour in each of them. After that, Left can jump directly to a pair of responses, continuing the game from there.
\end{enumerate}

Let $G,H\in\Np\setminus\{\infty,\overline{\infty}\}$. What we will prove is that $G \geqslant H$ if and only if Left, playing second, wins the {\sc inquisitor game} $G \uplus (-H)$. Of course, there is a dual formalization $G_1\,$\rotatebox[origin=c]{180}{$\uplus$}$\,G_2$ from Right's perspective.\\

As an example, let us revisit the games $G$ and $H$ from the previous subsection. To check if $G \geqslant H$, we have to play $G \uplus (-H)$. Consider the following Right's move:
\begin{center}
\begin{tikzpicture}
\clip(10.,8.3) rectangle (31.,12.8-2);
\draw [line width=1.pt] (12.,12.-2) -- (15.,9.);
\draw (10.36,12.7-2) node[anchor=north west] {$G \uplus (-H)$};
\draw (12,8.86) node[anchor=north west] {$\left\{*3,\{\infty\,|\,\{0\,|\,\overline{\infty}\},\{*2\,|\,\overline{\infty}\}\}\,\|\,0\right\} \uplus \{\{\infty\,|\,0\},\{\infty\,|\,*2,*3\}\,|\,-1\}$};
\end{tikzpicture}
\end{center}

\vspace{0.5cm}
Now it is Left's turn, and there are Left-checks in both components. Regarding the first one, suppose that, after inquiring Right, she finds out that the response to the check is to $\{0\,|\,\overline{\infty}\}$. Regarding the second component, against the first check, Right commits to the answer $0$, and against the second check, Right commits to the answer $*2$. We have the following situation:
\begin{center}
\begin{tikzpicture}
\clip(10.,7.5-0.5) rectangle (31.,12.8-2);
\draw [line width=1.pt] (12.,12.-2) -- (15.,9.);
\draw (10.36,12.7-2) node[anchor=north west] {$G \uplus (-H)$};
\draw (12,8.86) node[anchor=north west] {$\left\{*3,\{\infty\,|\,\{0\,|\,\overline{\infty}\},\{*2\,|\,\overline{\infty}\}\}\,\|\,0\right\} \uplus \{\{\infty\,|\,0\},\{\infty\,|\,*2,*3\}\,|\,-1\}$};
\draw (13.8,8.1) node[anchor=north west] {$\left(G,\{0\,|\,\overline{\infty}\}\right)$};
\draw (19,8.1) node[anchor=north west] {$\left(-H^L,0,*2\right)$};
\end{tikzpicture}
\end{center}

Given these strategies of responses, Left chooses $\{0\,|\,\overline{\infty}\}$ and $0$, securing the victory by playing from $\{0\,|\,\overline{\infty}\} \uplus 0$ to $0 \uplus 0$.
\begin{center}
\begin{tikzpicture}
\clip(10.,1+3) rectangle (31.,12.8-2);
\draw [line width=1.pt] (12.,12.-2) -- (15.,9.);
\draw (10.36,12.7-2) node[anchor=north west] {$G \uplus (-H)$};
\draw (12,8.86) node[anchor=north west] {$\left\{*3,\{\infty\,|\,\{0\,|\,\overline{\infty}\},\{*2\,|\,\overline{\infty}\}\}\,\|\,0\right\} \uplus \{\{\infty\,|\,0\},\{\infty\,|\,*2,*3\}\,|\,-1\}$};
\draw (13.8,8.1) node[anchor=north west] {$\left(G,\bm{\mathsf{\{0\,|\,\overline{\infty}\}}}\right)$};
\draw (19,8.1) node[anchor=north west] {$\left(-H^L,\bm{\mathsf{0}},*2\right)$};
\draw (15,6+1) node[anchor=north west] {$\{0\,|\,\overline{\infty}\}\uplus 0$};
\draw [line width=1.pt] (15.,5.+1) -- (12.,2.+3);
\draw (10.9,1.85+3) node[anchor=north west] {$0 \uplus 0$};
\end{tikzpicture}
\end{center}

Regardless of other strategies of responses, Left still wins. Suppose that, regarding the check in the first component, Right's commitment is the response to $\{*2\,|\,\overline{\infty}\}$. Regarding the second component, against the first check, Right commits to the answer $0$, and against the second check, Right commits to the answer $*3$. Given this strategy of responses, Left chooses $G$ and $*3$, securing victory by playing from $G \uplus *3$ to $*3 \uplus *3$.
\begin{center}
\begin{tikzpicture}
\clip(10.,1+3) rectangle (31.,12.8-2);
\draw [line width=1.pt] (12.,12.-2) -- (15.,9.);
\draw (10.36,12.7-2) node[anchor=north west] {$G \uplus (-H)$};
\draw (12,8.86) node[anchor=north west] {$\left\{*3,\{\infty\,|\,\{0\,|\,\overline{\infty}\},\{*2\,|\,\overline{\infty}\}\}\,\|\,0\right\} \uplus \{\{\infty\,|\,0\},\{\infty\,|\,*2,*3\}\,|\,-1\}$};
\draw (13.8,8.1) node[anchor=north west] {$\left(\bm{G},\{*2\,|\,\overline{\infty}\}\right)$};
\draw (19,8.1) node[anchor=north west] {$\left(-H^L,0,\bm{\mathsf{*3}}\right)$};
\draw (15,6+1) node[anchor=north west] {$G\uplus *3$};
\draw [line width=1.pt] (15.,5.+1) -- (12.,2.+3);
\draw (10.9,1.85+3) node[anchor=north west] {$*3 \uplus *3$};
\end{tikzpicture}
\end{center}

Once it is possible to verify that in all cases, playing second, Left wins $G \uplus (-H)$, it follows that $G \geqslant H$.

\subsection{Strategies of responses and game trees}
\label{ssec:strategies}

\vspace{0.3cm}
In the same way that in classic normal play, the statement ``$G\geqslant H$ if and only if $o^R(G+(-H))=\mathbf{L}$'' can be transformed into ``$G\geqslant H$ if and only if $\mathbf{M}(G,H)$'', in affine normal play, ``$G\geqslant H$ if and only if $o^R(G\uplus(-H))=\mathbf{L}$'' can be transformed into ``$G\geqslant H$ if and only if $\mathbf{M}^\infty(G,H)$''. In this last sentence, $\mathbf{M}^\infty(G,H)$ is a weaker form of maintenance property that will be formalized next. For now, so that the proofs can be done, it is necessary to define the meaning of ``strategy of responses'' to forcing sequences.

\begin{definition}[Strategy of Responses]\label{def:strategy}
Let $G\in \Np$. A \emph{strategy of Right responses} in $G$ is a network of decisions in response to forcing sequences carried out by Left, constructed as follows:
\begin{itemize}
  \item For $i=0$, $\mathbb{R}^0(G)=\{G\}$ is the singular set that contains $G$.
  \item For $i>0$, $\mathbb{R}^i(G)$ contains a choice of exactly one $K^{\overrightarrow{L}R}\neq\infty$ for each Left-check $K^{\overrightarrow{L}}$, where $K\in \mathbb{R}^{i-1}(G)$ (whenever such a choice can be made). $\mathbb{R}^i(G)$ may be empty; when this happens, for every $k>i$, $\mathbb{R}^k(G)$ is also empty, indicating the end of the decision-making process.
\end{itemize}

The set $\mathbb{R}(G)=\cup_{i \geqslant 0} \mathbb{R}^i(G)$ is the strategy of Right responses. A \emph{strategy of Left responses} $\mathbb{L}(G)$ is defined analogously.\\

The set of all strategies of Right responses in $G$ is denoted by $\mathfrak{R}(G)$. Analogously, the set of all strategies of Left responses in $G$ is denoted by $\mathfrak{L}(G)$.
\end{definition}

Given a game $G$, a strategy of Right responses is a set of Right choices in response to all forcing sequences by Left (a strategy of Left responses is the symmetrical concept). When comparing games, there is the possibility for Left to play in $G$, so $G$ must belong to the strategy of Right responses. That explains the base case $\mathbb{R}^0(G)=\{G\}$. While Left is forcing with checks, Right must answer, and the answers must belong to the strategy. That explains the recursion. What is new in $\Np$ is that the Left-silent options in $G$ are not the only Left-silent options that matter. We can have relevant Left-silent options after a sequence of checks, and these options depend on the strategy chosen by Right.

\begin{example}
Consider once again $G$ and $H^L$ as given in Subsection \ref{ssec:challenging}. Starting with $G=\left\{*3,\{\infty\,|\,\{0\,|\,\overline{\infty}\},\{*2\,|\,\overline{\infty}\}\}\,\|\,0\right\}$, there are two strategies of Right responses:
\begin{itemize}
  \item $\mathbb{R}_1(G)=\{G,\{0\,|\,\overline{\infty}\}\}$
  \item $\mathbb{R}_2(G)=\{G,\{*2\,|\,\overline{\infty}\}\}$
\end{itemize}

In $H^L=\{1\,|\,\{0\,|\,\overline{\infty}\},\{*2,*3\,|\,\overline{\infty}\}\}$, there are two strategies of Left responses:
\begin{itemize}
  \item $\mathbb{L}_1(H^L)=\{H^L,0,*2\}$
  \item $\mathbb{L}_2(H^L)=\{H^L,0,*3\}$
\end{itemize}
\end{example}

\noindent
\textbf{Notation:} $\overrightarrow{G}$ denotes a typical element of $\mathbb{R}(G)$ and $\overleftarrow{G}$ denotes a typical element of $\mathbb{L}(G)$. Hence, in $\mathbb{R}_1(G)$ in the previous example, $\overrightarrow{G}_1=G$ and $\overrightarrow{G}_2=\{0\,|\,\overline{\infty}\}$.\\

The choice of the term ``strategy'' was not accidental, as this same term is used in Economic Game Theory \citep{Ste021}. In fact, the concept we just described is an interesting case in which research in Combinatorial Game Theory borrows a procedure used in Economic Game Theory.

For a better understanding of this observation, we need to resort to game trees. A game tree of an affine game is drawn exactly the same way as in classical theory, with two nuances. Whenever a player's only option is a suicidal move, no edge is placed in the tree (as having or not having that move is equivalent); this is the first nuance. Whenever a player has at their disposal a winning terminating move, a small arrow is placed in the game tree; this is the second nuance. That said, as in classical theory, the followers of a game are identified with the nodes of its game tree, as these nodes are the roots of the subtrees that represent them. The games $\infty$ and $\overline{\infty}$ do not have a game tree, as in these cases, the play has already ended. Figure \ref{fig10} shows some examples.\\

\begin{figure}[!htb]
\begin{center}
\includegraphics[scale=1]{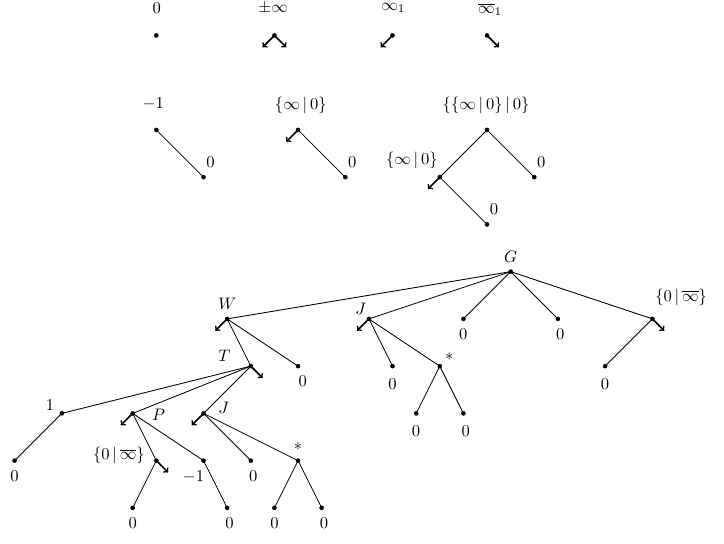}
\end{center}

\vspace{-0.4cm}
\caption{On top, the game trees of the four games $0=\{\overline{\infty}\,|\,\infty\}$, $\pm \infty=\{\infty\,|\,\overline{\infty}\}$, $\infty_1=\{\infty\,|\,\infty\}$, and $\overline{\infty}_1=\{\overline{\infty}\,|\,\overline{\infty}\}$ are displayed. In the center, the game trees of the games $-1$, $\{\infty\,|\,0\}$, and $\{\{\infty\,|\,0\}\,|\,0\}$ are easy to understand. Regarding the game tree below, we have $J=\{\infty\,|\,0,*\}$, $P=\{\infty\,|\,\{0\,|\,\overline{\infty}\},-1\}$, $T=\{1,P,J\,|\,\overline{\infty}\}$, and $W=\{\infty\,|\,T,0\}$. Since $G=\{W,J,0\,|\,0,\{0\,|\,\overline{\infty}\}\}$, the game tree of $G$ pertains to the game form $\{\{\infty\,|||\,\{1,\{\infty\,|\,\{0\,|\,\overline{\infty}\},-1\},\{\infty\,|\,0,*\}\,||\,\overline{\infty}\},0\},\{\infty\,|\,0,*\},0\,||||\,0,\{0\,|\,\overline{\infty}\}\}$.}
\label{fig10}
\end{figure}

Returning to the procedure used in Economic Game Theory, given an affine game $K$, it is possible to consider the \emph{Left-forcing-tree} of $K$ with all the information about the forcing sequences that can be carried out by Left. To do this, the following four steps are taken: (1) the root, identified with the game $K$ itself, is not removed from the game tree; (2) all nodes related to the elements of $K^{\mathcal{R}}$ and their followers are removed from the game tree; (3) regarding the Left moves, only the nodes identified with followers of the type $K^{\overrightarrow{L}R\overrightarrow{L}R\ldots\overrightarrow{L}}$ are retained; (4) regarding the Right moves, only the nodes identified with the responses to the Left-checks from the previous item are retained.

We observe that a Left-forcing-tree \emph{is not} a game tree. It is an object constructed from a game tree, keeping games as labels for some chosen nodes. Figure \ref{fig11} illustrates the Left-forcing-tree of the game $G$ shown in Figure \ref{fig10}.\\

\begin{figure}[!htb]
\begin{center}
\scalebox{0.5}{
\includegraphics[scale=1]{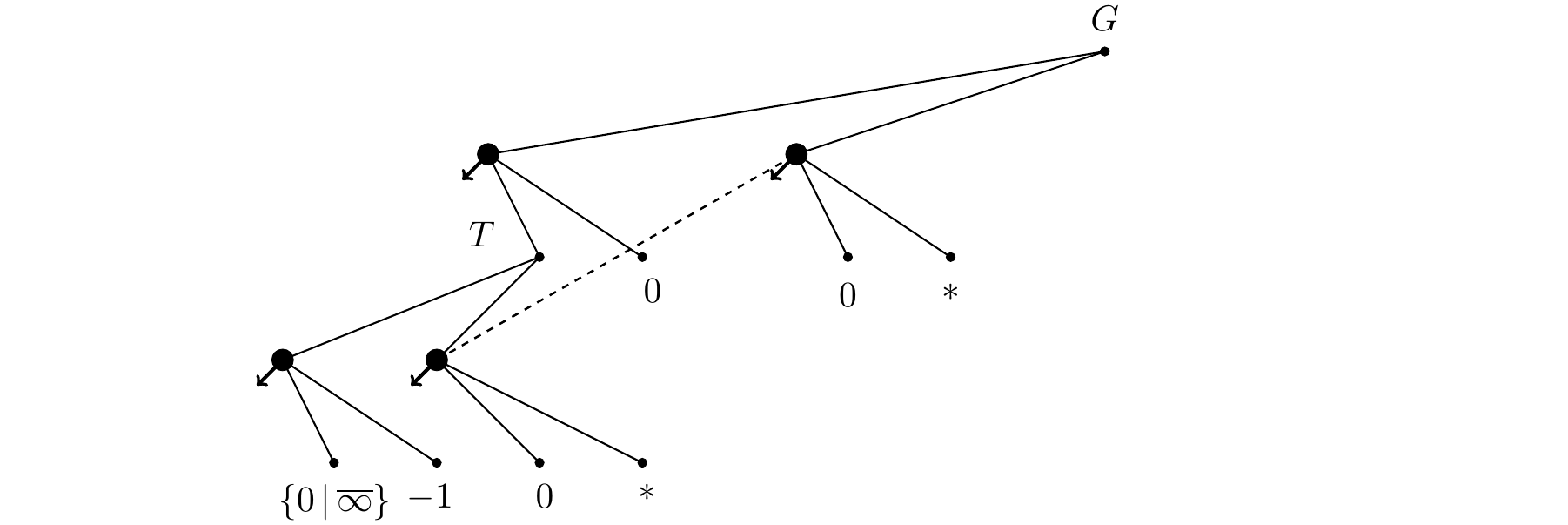}}
\end{center}

\vspace{-0.4cm}
\caption{Left-forcing-tree of the game $G$ shown in Figure 10.}
\label{fig11}
\end{figure}

\vspace{0.8cm}
It is possible to think of $\mathfrak{R}(G)$ by considering the Left-forcing-tree of $G$ as if it were the extensive form of an economic game without payoffs. Note that in Economic Game Theory, node labels are typically payoffs, and edge labels help identifying strategies. Since, in our case, there are no payoffs, the node labels remain the games identified with those nodes. The fundamental idea is to not use labels on the edges, identifying the strategies directly with the games (node labels). The larger ones (in black) are decision nodes, nodes where Right can be faced with a decision. The smaller nodes are possible choices and a strategy of Right responses is a set of choices.

The dashed line, also used in Economic Game Theory, indicates that Right's decision can be considered the same in both nodes (the check is identical). Making different decisions against the same check, only because it occurs at different places of the game tree, merely increases the number of elements in the strategy, benefiting Left. Therefore, assuming that Right always responds in the same way against the same check reduces the work in determining strategies. Having that in mind, there are $6$ strategies of Right responses in $G$:

\begin{itemize}
  \item $\mathbb{R}_1(G)=\{G,0\}$ -- Right moves to $0$ against both checks.
  \item $\mathbb{R}_2(G)=\{G,0,*\}$ -- Right moves to $0$ against the first check and to $*$ against the second.
  \item $\mathbb{R}_3(G)=\{G,T,\{0\,|\,\overline{\infty}\},0\}$ -- Against the first check, Right moves to $T$ with the intention of continuing with $\{0\,|\,\overline{\infty}\}$ or with $0$. The response to the second check does not need to be specified since that check has already been analyzed.
  \item $\mathbb{R}_4(G)=\{G,T,\{0\,|\,\overline{\infty}\},*\}$ -- Against the first check, Right moves to $T$ with the intention of continuing with $\{0\,|\,\overline{\infty}\}$ or with $*$. The response to the second check does not need to be specified.
  \item $\mathbb{R}_5(G)=\{G,T,-1,0\}$ -- Against the first check, Right moves to $T$ with the intention of continuing with $-1$ or with $0$. The response to the second check does not need to be specified.
  \item $\mathbb{R}_6(G)=\{G,T,-1,*\}$ -- Against the first check, Right moves to $T$ with the intention of continuing with $-1$ or with $*$. The response to the second check does not need to be specified.\\
\end{itemize}

The set of strategies of Right responses in $G$ is $\mathfrak{R}(G)=\{\mathbb{R}_1,\mathbb{R}_2,\mathbb{R}_3,\mathbb{R}_4,\mathbb{R}_5,\mathbb{R}_6\}$.\\

\subsection{Options-only comparison in $\Np$}
\label{ssec:comparison}

\vspace{0.3cm}
To formalize the maintenance property appropriate to affine normal play, it is necessary to combine two strategies of responses, one for each player.

\begin{definition}[Locked Pairs]\label{def:locked} Let $G,H\in \Np$. The pair of games $(G,H)$ is \emph{locked} if for all $(\mathbb{R},\mathbb{L})\in\mathfrak{R}(G)\times\mathfrak{L}(H)$ there are two games $G'\in \mathbb{R}$ and $H'\in \mathbb{L}$ for which
\begin{itemize}
  \item there exists a Right-silent game $H'^{R}$ such that $G'\geqslant H'^{R}$ or
  \item there exists a Left-silent $G'^{L}$ such that $G'^{L}\geqslant H'$.
\end{itemize}
\end{definition}

\begin{definition}[$\infty$-Maintenance Property]\label{def:maintenanceinf}
Let $G,H\in\Np$ be two affine games. We say that $(G,H)$ \emph{satisfies the $\infty$-maintenance property}, denoted by $\mathbf{M}^\infty(G,H)$, if $(G^R,H)$ is locked for all $G^R\in \GR$, and $(G,H^L)$ is locked for all $H^L\in \HL$.\\
\end{definition}

\begin{observation}
If $(G,H)$ satisfies the $\infty$-maintenance property, then, for all $G^R$ and for each pair $(\mathbb{R}(G^R),\mathbb{L}(H))$, there are two games $\overrightarrow{G^R}_a\in\mathbb{R}(G^R)$ and $\overleftarrow{H}_b\in \mathbb{L}(H)$ ``compatible with the traditional maintenance property''. That means that there exists a Right-silent game $(\overleftarrow{H}_b)^R$ for which $\overrightarrow{G^R}_a\geqslant (\overleftarrow{H}_b)^R$ or  there exists a Left-silent game $(\overrightarrow{G^R}_a)^L$ for which $(\overrightarrow{G^R}_a)^L\geqslant \overleftarrow{H}_b$. The same happens with any $H^L$. In that sense, we have a weaker version of the traditional maintenance property, as will be stated in Theorem \ref{thm:traditional}.\\
\end{observation}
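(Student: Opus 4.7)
The plan is to chain the two preceding definitions. Starting from $\mathbf{M}^\infty(G,H)$, Definition~\ref{def:maintenanceinf} tells me that for every $G^R\in\GR$ the pair $(G^R,H)$ is locked, and similarly that $(G,H^L)$ is locked for every $H^L\in\HL$. I would then fix an arbitrary $G^R$ together with an arbitrary pair of strategies $(\mathbb{R},\mathbb{L})\in\mathfrak{R}(G^R)\times\mathfrak{L}(H)$ and invoke Definition~\ref{def:locked}, which immediately produces two games $G'\in\mathbb{R}$ and $H'\in\mathbb{L}$ satisfying either (i) $G'\geqslant H'^R$ for some Right-silent $H'^R$, or (ii) $G'^L\geqslant H'$ for some Left-silent $G'^L$.

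The remaining step is purely notational: using the shorthand introduced right after Definition~\ref{def:strategy}, I rename $G'$ as $\overrightarrow{G^R}_a$ and $H'$ as $\overleftarrow{H}_b$, and this is precisely the conclusion written in the observation. The symmetric half, concerning each $H^L\in\HL$, is then obtained by applying the same two-step unpacking to $(G,H^L)$, this time drawing a strategy from $\mathfrak{R}(G)$ and one from $\mathfrak{L}(H^L)$, and relabelling the resulting witnesses via the $\overrightarrow{\,\cdot\,}$, $\overleftarrow{\,\cdot\,}$ convention.

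I do not expect a genuine obstacle: the observation carries no mathematical content beyond a transcription of the locked-pair condition into the arrow shorthand, and its purpose is plainly to make visible the parallel with the classical maintenance property of Definition~\ref{def:maintenance} ahead of Theorem~\ref{thm:traditional}. The only point that merits a moment's care is confirming that the quantifier order written in the observation (\emph{for all $G^R$, for each pair $(\mathbb{R}(G^R),\mathbb{L}(H))$, there exist $\overrightarrow{G^R}_a,\overleftarrow{H}_b$}) matches the nested quantifiers built into Definitions~\ref{def:maintenanceinf} and~\ref{def:locked}; a direct comparison confirms that they agree, so the proof reduces to a one-line invocation of those two definitions in sequence.
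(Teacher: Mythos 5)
Your proposal is correct and matches the paper, which offers no proof at all for this observation precisely because, as you note, it is a direct transcription of Definitions \ref{def:locked} and \ref{def:maintenanceinf} into the arrow notation. Your check that the quantifier nesting agrees is the only substantive point, and it does agree.
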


\begin{example}
Consider again $G$ and $H$ from Subsection \ref{ssec:challenging}, to check that $H^L$ poses no issues regarding the $\infty$-maintenance property. We have 2 strategies of Right responses, $\mathbb{R}_1(G)=\{G,\{0\,|\,\overline{\infty}\}\}$ and $\mathbb{R}_2(G)=\{G,\{*2\,|\,\overline{\infty}\}\}$, and 2 strategies of Left responses, $\mathbb{L}_1(H^L)=\{H^L,0,*2\}$ and $\mathbb{L}_2(H^L)=\{H^L,0,*3\}$. There are $4$ pairs to consider.
\begin{itemize}
  \item For $\left(\mathbb{R}_1(G),\mathbb{L}_1(H^L)\right)$, we have the pair $(\{0\,|\,\overline{\infty}\},0)$ and the Left-silent game \linebreak$0\in \{0\,|\,\overline{\infty}\}^\mathcal{L}$ such that $0\geqslant 0$.
  \item For $\left(\mathbb{R}_1(G),\mathbb{L}_2(H^L)\right)$, we have the pair $(\{0\,|\,\overline{\infty}\},0)$ and the Left-silent game \linebreak$0\in \{0\,|\,\overline{\infty}\}^\mathcal{L}$ such that $0\geqslant 0$.
  \item For $\left(\mathbb{R}_2(G),\mathbb{L}_1(H^L)\right)$, we have the pair $(\{*2\,|\,\overline{\infty}\},*2)$ and the Left-silent game \linebreak$*2\in \{*2\,|\,\overline{\infty}\}^\mathcal{L}$ such that $*2\geqslant *2$.
  \item For $\left(\mathbb{R}_2(G),\mathbb{L}_2(H^L)\right)$, we have the pair $(G,*3)$ and the Left-silent game $*3\in G^\mathcal{L}$ such that $*3\geqslant *3$.\\
\end{itemize}
\end{example}

It is time to prove the two implications that underpin the main result of this section.\\

\begin{theorem} \label{thm:firstimpli}Let $G,H \in\Np\setminus\{\infty,\overline{\infty}\}$. If $G\geqslant H$ then $\mathbf{M}^\infty(G,H)$.
\end{theorem}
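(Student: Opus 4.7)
The plan is to argue by contradiction, using strong induction on $\tilde{b}(G)+\tilde{b}(H)$. Assuming $G\geqslant H$, I would prove the two halves of $\mathbf{M}^\infty(G,H)$ simultaneously; by the Left--Right symmetry built into Definitions \ref{def:locked} and \ref{def:maintenanceinf}, it suffices to establish that $(G,H^L)$ is locked for every $H^L\in\HL$, the case of $(G^R,H)$ being entirely analogous. So I fix $H^L\in\HL$ and arbitrary strategies $\mathbb{R}\in\mathfrak{R}(G)$, $\mathbb{L}\in\mathfrak{L}(H^L)$, and aim to extract a pair $(G',H')\in\mathbb{R}\times\mathbb{L}$ together with either a Right-silent $H'^R$ with $G'\geqslant H'^R$, or a Left-silent $G'^L$ with $G'^L\geqslant H'$.

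The core technical step is to manufacture, from the two given strategies, a single test game $X=X(\mathbb{R},\mathbb{L})$ acting as a pruned conjugate of $H^L$: its Left options encode the Right-responses prescribed by $\mathbb{R}$ (i.e.\ they correspond to conjugates of the members of $\mathbb{R}$ allowed after a sequence of Left-checks in $G$), and its Right options encode the Left-responses prescribed by $\mathbb{L}$. The game $X$ is built so that from $H+X$, Right has a canonical ``imitation'' play-line: by moving in $X$ he effectively forces the play into $H+(\text{component of }-H^L\text{ compatible with }\mathbb{L})$, with any attempted interposed Left check in $H$ being parried inside $X$ via the $\mathbb{L}$-component. Using $G\geqslant H$, the resulting outcome inequality on $G+X$ then forces Left to have a winning reply.

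From the induction hypothesis and the structure of $X$, I would then perform a case split on Left's winning reply in $G+X$. If Left's reply lies inside $X$, unfolding the $\mathbb{L}$-encoding produces a pair $(G',H')$ and a Right-silent $H'^R$ for which $o(G'+Y)\geqslant o(H'^R+Y)$ for all test games $Y$, which by the outer induction gives $G'\geqslant H'^R$ (the Right-silent alternative of the locked condition). If instead Left's reply lies inside $G$, possibly preceded by a Right-check sequence absorbed by the $\mathbb{R}$-encoding, unfolding the $\mathbb{R}$-component yields a pair with a Left-silent $G'^L\geqslant H'$ (the Left-silent alternative). The edge cases $H^L\in\{\infty,\overline{\infty}\}$ are handled separately: for $H^L=\infty$ one checks directly that $G\geqslant H$ forces $G$ itself to have a Left-check option, and for $H^L=\overline{\infty}$ the locked condition is vacuous because the relevant option is suicidal.

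The main obstacle is twofold. First, the recursion has to be set up on a measure that genuinely decreases along forced sequences: the games $G'\in\mathbb{R}$ arising after Left-checks in $G$ are not subgames of $G$ in the usual ordinal sense, so I would need to do induction on the pair $(\tilde{b}(G)+\tilde{b}(H),\text{depth}(\mathbb{R})+\text{depth}(\mathbb{L}))$ ordered lexicographically, carefully verifying the step. Second, the construction of $X$ must be tight enough that \emph{only} the intended Right-line wins $H+X$ playing first; otherwise Left could have a spurious winning reply in $G+X$ that does not translate into either alternative of the locked condition. Getting the options of $X$ exactly right — including making sure that Right-checks in $X$ do not accidentally give Left an escape that bypasses the $\mathbb{R}$ and $\mathbb{L}$ strategies — is where I expect the real delicacy of the proof to lie.
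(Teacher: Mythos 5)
Your overall architecture --- build a single test game from the pair of strategies, play it against $G$ and against $H$, and read the locked-pair witness off Left's winning reply --- is the classical proof of the maintenance property transplanted to $\Np$, and it breaks at exactly the extraction step you flag as ``delicate''. In $\Npc$ one passes from ``Left has a winning reply $G^L-H^L$ in $G-H$'' to the inequality $G^L\geqslant H^L$ by the Fundamental Theorem \emph{plus invertibility of $H^L$}: one cancels $-H^L$ against $H^L$. In $\Np$ the relevant positions are pressing games, which by Theorem \ref{thm:invertible} are not invertible, so your ``pruned conjugate'' $X(\mathbb{R},\mathbb{L})$ is not an additive inverse of anything and a single winning move of Left in the single sum $G+X$ cannot certify the universally quantified statement $o(G'+Y)\geqslant o(H'^{R}+Y)$ for all $Y$ --- which, by Definition \ref{def:orderequivalence}, \emph{is} the inequality $G'\geqslant H'^{R}$ you need, so no ``outer induction'' can bridge that gap (the induction hypothesis you have available proves the implication in the wrong direction). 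Making the extraction work would amount to proving ``Left wins $P$ plus a pruned conjugate of $Q$ iff $P\geqslant Q$'', i.e.\ essentially Theorem \ref{thm:constructive} itself; the example of Subsection \ref{ssec:challenging} is precisely a warning that such equivalences fail for the naive conjugate.

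The paper goes the other way around, and you should too: prove the contrapositive. Assume some pernicious pair $(\mathbb{R}(G),\mathbb{L}(H^L))$ violates the locked condition. Then \emph{every} candidate inequality $(\overrightarrow{G}_a)^{L_i}\geqslant\overleftarrow{H^L}_b$ (Left-silent) and $\overrightarrow{G}_a\geqslant(\overleftarrow{H^L}_b)^{R_j}$ (Right-silent) fails, and Lemma \ref{lem:downlink} converts each failure into a distinguishing game with the correct mover (this is where silence is used). One then assembles all these distinguishing games into a single $X=\{X^L\,|\,X^R\}$ whose options are check-gadgets $\{\infty\,|\,\overline{X}_a\}$ and $\{\overline{Y}_b\,|\,\overline{\infty}\}$, so that during play the opponent is forced to ``select'' the distinguishing game matching whichever silent move the player eventually makes; Right's play in $G+X$ follows $\mathbb{R}$, Left's play in $H^L+X$ follows $\mathbb{L}$, and one checks $o^L(G+X)=\mathbf{R}<o^L(H+X)=\mathbf{L}$, contradicting $G\geqslant H$. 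Note that this direction needs no induction on birthdays at all (your worry about a decreasing measure along forced sequences evaporates, since the strategies $\mathbb{R},\mathbb{L}$ are finite sets fixed in advance), and no tightness property of $X$ beyond the two one-sided claims about $G+X$ and $H+X$. I would also not wave away $H^L=\overline{\infty}$ as ``vacuous'': the locked condition for $H'=\overline{\infty}$ still demands a Left-silent $G'^{L}$ somewhere in the strategy, which is what the $X^L=\overline{\infty}$ branch of the construction is there to handle.
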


For the proof of Theorem \ref{thm:firstimpli}, a preliminary lemma is required.\\

\begin{lemma}\label{lem:downlink} Let $G,H\in \Np$.
\begin{itemize}
  \item If $G\not\geqslant H$ and $G$ is Left-silent, then there is some $X\in \Np\setminus\{\infty,\overline{\infty}\}$ such that $o^L(G+X)=\mathbf{R}<o^L(H+X)=\mathbf{L}$.
  \item  If $G\not\geqslant H$ and $H$ is Right-silent, then there is some $X\in \Np\setminus\{\infty,\overline{\infty}\}$ such that $o^R(G+X)=\mathbf{R}<o^R(H+X)=\mathbf{L}$.
\end{itemize}
\end{lemma}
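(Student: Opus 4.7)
The plan is to prove the first item; the second follows by the left--right symmetry of the statement, applied to $(-H,-G)$ in place of $(G,H)$ together with the standard identity $o^L(-A)=\overline{o^R(A)}$ coming from Definition~\ref{def:conjugate} (note that $-H$ Left-silent is the same as $H$ Right-silent, and $-H\not\geqslant -G$ is the same as $G\not\geqslant H$).

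By Definition~\ref{def:orderequivalence}, $G\not\geqslant H$ supplies some $X_0\in\Np\setminus\{\infty,\overline{\infty}\}$ with $o(G+X_0)\not\geqslant o(H+X_0)$. Inspecting the outcome lattice, at least one of the following two conditions must hold: either $o^L(G+X_0)=\mathbf{R}$ together with $o^L(H+X_0)=\mathbf{L}$, or $o^R(G+X_0)=\mathbf{R}$ together with $o^R(H+X_0)=\mathbf{L}$. If the first conjunction already holds at $X_0$, simply take $X=X_0$ and the proof is complete.

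Suppose only the second conjunction holds. The plan is to repair the witness by adjoining a passing summand. A natural first candidate is $X=X_0+\ast$, with $\ast=\{0\mid 0\}$: on the $H$-side, Left plays $\ast\to 0$, leaving Right to move in $H+X_0$, where $o^R(H+X_0)=\mathbf{L}$ forces Right to lose, so $o^L(H+X_0+\ast)=\mathbf{L}$. On the $G$-side, the analogous pass $\ast\to 0$ loses for Left because $o^R(G+X_0)=\mathbf{R}$; what remains is to rule out the Left options of the form $G^L+X_0+\ast$ and $G+X_0^L+\ast$.

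Ruling out those remaining Left options is the main obstacle, and is precisely where Left-silence of $G$ is used. Since $\infty\notin G^\mathcal{L}$, no move $G^L+X_0+\ast$ collapses instantly to $\infty$ in Left's favour, so each is a genuine sub-position on which a nested induction (say on $\tilde{b}(G)+\tilde{b}(H)+\tilde{b}(X_0)$) can deliver a winning Right reply. When the naive choice $\ast$ does not suffice --- for instance, when $X_0$ itself contains Left-check options through which Left can recover the lost tempo --- I would iterate by replacing $\ast$ by a more elaborate auxiliary game (a sum of stars, or a small infinitesimal nimber), using Theorem~\ref{thm:compadd} to guarantee that the added summand preserves the Right-first failure. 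The hard part is the bookkeeping required to ensure that at every recursive stage the added summand flips the parity on the $H$-side while simultaneously denying Left any new winning move on the $G$-side; Left-silence of $G$ is the structural fact that makes this possible, by precluding any terminating shortcut among Left's options.
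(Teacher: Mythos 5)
Your reduction of the second bullet to the first via conjugation is fine, and your opening move (extract a witness $X_0$ and split into the $o^L$-conjunction and the $o^R$-conjunction) matches the paper. The gap is in the repair step. Your candidate $X=X_0+\ast$ simply does not work, and not only because of checks: it already fails for ordinary Conway games. Take $G=\uparrow+\ast$ (a Left-silent Conway form), $H=0$, $X_0=0$. Then $G\not\geqslant H$ is witnessed only on the Right-moving-first side ($o^R(G)=\mathbf{R}$, $o^R(H)=\mathbf{L}$, while $o^L(G)=o^L(H)=\mathbf{L}$), but $o^L(G+\ast)=o^L(\uparrow)=\mathbf{L}$, not $\mathbf{R}$. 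So the Left options $G^L+X_0+\ast$ that you defer to ``bookkeeping'' genuinely cannot be ruled out, and no amount of extra stars or nimbers obviously fixes this; moreover, in the affine setting Left can also regain tempo by interposing checks from $G$ or $X_0$, which Left-silence of $G$ does not forbid (it only forbids $\infty\in G^{\mathcal{L}}$, not Left options that are themselves Left-checks).

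The paper's construction is different and is the idea you are missing: given the Right-first witness $Y$, set $X=\{Y\mid\overline{\infty}\}$, a Right-check. In $G+X$, any Left move $G^L+X$ is refuted by Right playing the $X$-component to $\overline{\infty}$ --- this is exactly where Left-silence is used, since the only escape would be $G^L=\infty$ --- so Left's only non-suicidal move is to $G+Y$, handing Right the move in a position with $o^R(G+Y)=\mathbf{R}$. Meanwhile in $H+X$ Left moves to $H+Y$ and wins since $o^R(H+Y)=\mathbf{L}$. In other words, rather than trying to flip parity with an extra tempo-neutral summand, one adjoins a component whose Right option is absorbing, which \emph{forces} Left's first move and thereby converts a Right-first distinguishing game into a Left-first one in a single step. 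Your proof as written does not reach this, and its central step is both unproved and false for the candidate you propose.
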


\begin{proof}
It is enough to prove the first item, as the proof for the second is entirely analogous. By Definition \ref{def:orderequivalence}, $G\not\geqslant H$ implies that
\begin{itemize}
  \item $\exists X\in \Np\setminus\{\infty,\overline{\infty}\}$ such that $o^L(G+X)=\mathbf{R}<o^L(H+X)=\mathbf{L}$ or
  \item $\exists Y\in \Np\setminus\{\infty,\overline{\infty}\}$ such that $o^R(G+Y)=\mathbf{R}<o^R(H+Y)=\mathbf{L}$.
\end{itemize}

If the first case is satisfied, then the proof is complete. If the second case is satisfied, then let $X$ be the game $\{Y\,|\,\overline{\infty}\}$. Cases where $G=\overline{\infty}$ or $H=\infty$ are very simple to handle. Otherwise, we have $o^L(G+X)=\mathbf{R}<o^L(H+X)=\mathbf{L}$. These individualized outcomes arise from the fact that the moves to $G+Y$ and $H+Y$ are mandatory, and $o^R(G+Y)=\mathbf{R}<o^R(H+Y)=\mathbf{L}$.\\
\end{proof}

\begin{proof} (Theorem \ref{thm:firstimpli})
For the sake of contradiction, suppose that we have $G\geqslant H$ and that $(G,H)$ does not satisfy the $\infty$-maintenance property as stated in Definition \ref{def:maintenanceinf}.\\

Without loss of generality, suppose that the $\infty$-maintenance property fails due to the existence of some $H^L \in H^\mathcal{L}$ opposing it. If the failure were due to the existence of some $G^R \in G^\mathcal{R}$ opposing it, the proof would be similar.

Saying that is the same as saying that there is a particular \emph{pernicious pair} $(\mathbb{R}(G),\mathbb{L}(H^L))=(\{\overrightarrow{G}_1,\ldots,\overrightarrow{G}_a,\ldots\},\{\overleftarrow{H^L}_1,\ldots,\overleftarrow{H^L}_b,\ldots\})$ where the following holds for each pair $(\overrightarrow{G}_a,\overleftarrow{H^L}_b)$:\\

\noindent
there are no Left-silent games $(\overrightarrow{G}_a)^{L_i}$ or, for all $i$, $\underbrace{(\overrightarrow{G}_a)^{L_i}}_{Left-silent}\not\geqslant \overleftarrow{H^L}_b$
\begin{center}and\end{center}
there are no Right-silent games $(\overleftarrow{H^L}_b)^{R_j}$ or, for all $j$, $\overrightarrow{G}_a\not\geqslant \underbrace{(\overleftarrow{H^L}_b)^{R_j}}_{Right-silent}$.\\\\

Note that there is a slight abuse of language in the previous expressions. The games $(\overrightarrow{G}_a)^{L_i}$ should ideally be expressed as $(\overrightarrow{G}_a)^{L_i}$, where $i\in\{1,\ldots,k_a\}$, given that we are enumerating the Left-silent options of $\overrightarrow{G}_a$. However, to prevent an overwhelming use of subscripts and superscripts, we have chosen a simplified writing style.\\

\noindent
Fact 1: Regarding the Left-silent games, Lemma \ref{lem:downlink} ensures that, for all $a, b, i$, there exist distinguishing games $X_{a, b}^i$ such that

$$o^L(\underbrace{(\overrightarrow{G}_a)^{Li}}_{Left-silent}+X_{a,b}^i)=\mathbf{R}<o^L(\overleftarrow{H^L}_b+X_{a,b}^i)=\mathbf{L}.$$

\vspace{0.3cm}
\noindent
Fact 2: Regarding the Right-silent games, Lemma \ref{lem:downlink} ensures that, for all $a, b, j$, there exist distinguishing games $Y_{a, b}^j$ such that

$$o^R(\overrightarrow{G}_a+Y_{a,b}^j)=\mathbf{R}<o^R(\underbrace{(\overleftarrow{H^L}_b)^{R_j}}_{Right-silent}+Y_{a,b}^j)=\mathbf{L}.$$

With the aim of constructing a distinguishing game leading to the desired contradiction, for each $b$, let $\overline{X}_b=\bigcup_{a,i} \{X_{a,b}^i\}$ , and for each $a$, let $\overline{Y}_a=\bigcup_{b,j} \{Y_{a,b}^j\}$. The slight abuse of language is repeated. The set $\bigcup_{a,i} \{X_{a,b}^i\}$  is $\bigcup_{a}\bigcup_{i\in\{1,\ldots,k_a\}}\{X_{a,b}^i\}$, and the same for $\bigcup_{b,j} \{Y_{a,b}^j\}$. Supported by these sets, let\\

$$X^L=\left\{\begin{array}{ll}
               \overline{\infty} & \text{if there are no Right-silent }(\overleftarrow{H^L}_b)^{R_j} \\
               \{\infty\,|\,\{\overline{Y}_1\,|\,\overline{\infty}\},\{\overline{Y}_2\,|\,\overline{\infty}\},\ldots\} & \text{if there are Right-silent }(\overleftarrow{H^L}_b)^{R_j}
             \end{array}
\right.$$

\vspace{-0.1cm}
and

\vspace{-0.3cm}
$$X^R=\left\{\begin{array}{ll}
               \infty & \text{if there are no Left-silent }(\overrightarrow{G}_a)^{Li} \\
               \{\{\infty\,|\,\overline{X}_1\},\{\infty\,|\,\overline{X}_2\},\ldots\,|\,\overline{\infty}\} & \text{if there are Left-silent }(\overrightarrow{G}_a)^{Li}
             \end{array}
\right.$$

and consider the distinguishing game $X=\{X^L\,|\,X^R\}$.\\\\\\

\noindent
Claim 1: Left, playing first, loses $G+X$. Right follows his strategy of the pernicious pair, described in the following picture. The dashed line means ``forcing sequence''. After that, Left either chooses a Left-silent game on the first component or makes a move on the second component. The ``Right wins'' are supported by Facts 1~and~2. If there are no Left-silent games $(\overrightarrow{G}_a)^{Li}$, then only the branch on the right matters. According to Definition \ref{def:strategy}, Right never moves to $\infty$, and, if Right moves to $\overline{\infty}$, then Left loses anyway. If $X^L=\overline{\infty}$, then Left loses when playing to $(\overrightarrow{G}_a)+X^L$.\\

\vspace{1cm}
\begin{center}
\scalebox{0.75}{
\includegraphics[scale=1]{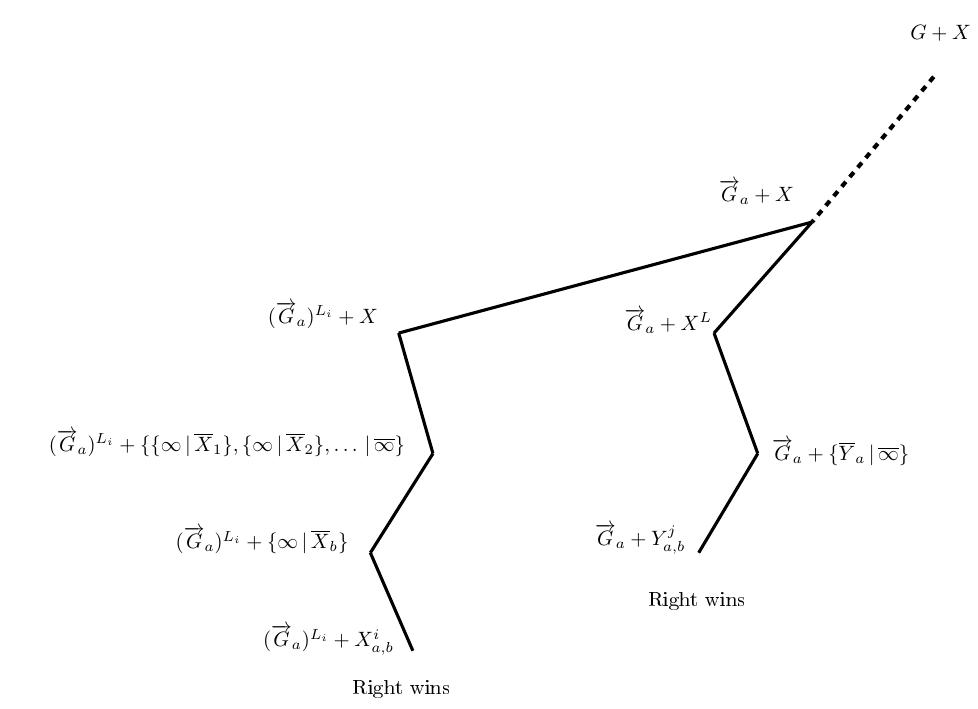}}
\end{center}

\vspace{1cm}
Claim 2: Left, playing first, wins $H+X$ by moving to $H^L+X$. Left follows her strategy of the pernicious pair, described in the following picture. The dashed line means ``forcing sequence''. After that, Right either makes a move on the second component or chooses a Right-silent game on the first component. The ``Left wins'' are supported by Facts 1~and~2. If there are no Right-silent games $(\overleftarrow{H^L}_b)^{R_j}$ then only the branch on the left matters. According to Definition \ref{def:strategy}, Left never moves to $\overline{\infty}$, and, if Left moves to $\infty$, then Left wins anyway. If $X^R=\infty$ then Right loses when playing to $(\overleftarrow{H^L}_b)+X^R$.

\begin{center}
\scalebox{0.75}{
\includegraphics[scale=1]{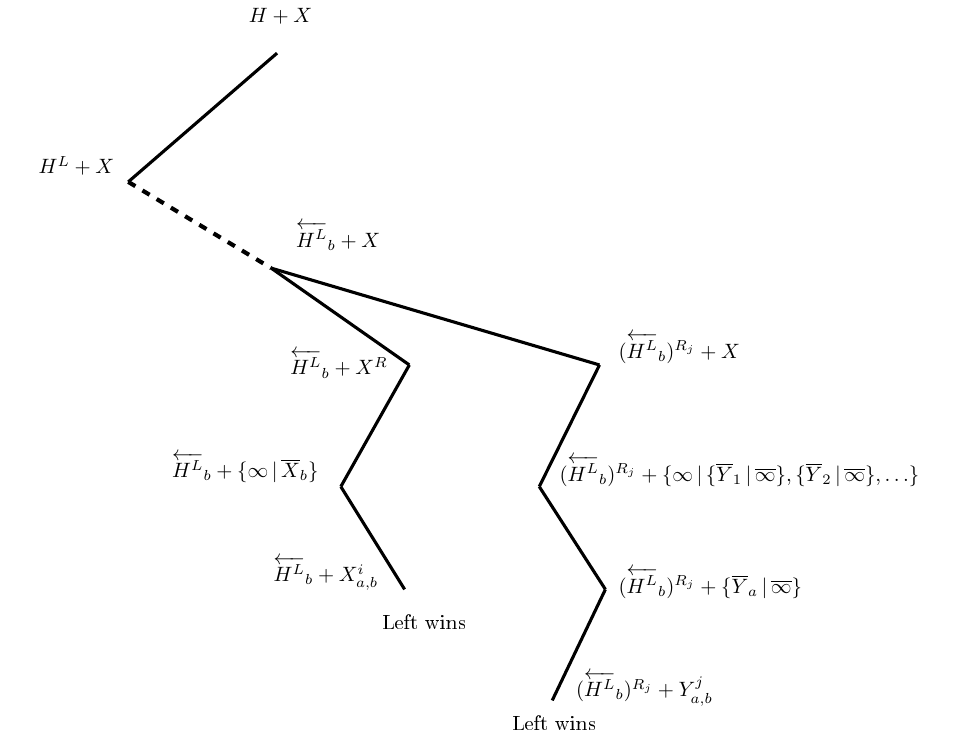}}
\end{center}

It follows from Claims 1 and 2 that $G\not\geqslant H$, resulting in a contradiction.
\end{proof}

The opposite direction is also true, as the following Theorem shows.

\begin{theorem} \label{thm:secondimpli}Let $G,H \in\Np\setminus\{\infty,\overline{\infty}\}$. If $\mathbf{M}^\infty(G,H)$ then $G\geqslant H$.
\end{theorem}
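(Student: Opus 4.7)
The plan is to proceed by a joint induction on $\tilde{b}(G)+\tilde{b}(H)+\tilde{b}(X)$. By the symmetry between the $(G^R,H)$ locks and the $(G,H^L)$ locks in $\mathbf{M}^\infty$, and the dual definitions of $o^L,o^R$, it suffices to show the Left-first direction: $o^L(H+X)=\mathbf{L}$ implies $o^L(G+X)=\mathbf{L}$. Fix a winning Left first move in $H+X$, which must be either $H+X^L$ or $H^L+X$. If the move is $H+X^L$ with $X^L=\infty$, then $G+X^L=\infty$ is immediately winning for Left (as $G\neq\overline{\infty}$); otherwise $X^L$ has strictly smaller formal birthday, $\mathbf{M}^\infty(G,H)$ is unchanged, and the inductive hypothesis yields $G+X^L\geqslant H+X^L$, hence $o^R(G+X^L)=\mathbf{L}$ and Left wins $G+X$ via $G+X^L$.

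The substantive case is the winning move $H^L+X$, so $o^R(H^L+X)=\mathbf{L}$. Crucially, this forces $X$ to have no Right option equal to $\overline{\infty}$ and $H^L$ to have no Right option equal to $\overline{\infty}$, since otherwise Right wins outright. The sub-possibility $H^L=\infty$ is handled first: the lock of $(G,\infty)$ instantiated with the only available $\mathbb{L}=\{\infty\}$ forces $G$ itself to contain $\infty$ as a Left option, and Left plays $\infty$ at once. Otherwise, fix a Left winning strategy $\tau$ for $H^L+X$ played second; since $\tau$ must defend Right's checks locally in $H^L$ (else it loses to Right's threatened $\overline{\infty}$), one can extract from $\tau$ a strategy of responses $\mathbb{L}_\tau(H^L)\in\mathfrak{L}(H^L)$. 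Because $\tau$ beats every continuation, every $H'\in\mathbb{L}_\tau(H^L)$ satisfies $o^R(H'+X)=\mathbf{L}$ (and $H'$ has no $\overline{\infty}$ as Right option), while every Right-silent Right option $H'^R$ of such an $H'$ satisfies $o^L(H'^R+X)=\mathbf{L}$. In $G+X$, Left launches a forcing sequence in $G$: any Right deviation (moving in $X$ to some $X^R\neq\overline{\infty}$ or failing to defend locally in $G$) lets Left answer $\infty$ and win. Assuming local defence, Right's responses trace out some $\mathbb{R}(G)\in\mathfrak{R}(G)$; applying the lock of $(G,H^L)$ to the pair $(\mathbb{R}(G),\mathbb{L}_\tau(H^L))$ produces a witness $(G',H')$. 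In the Left-silent sub-case $G'^L\geqslant H'$, Left steers through the check sequence to $G'$ and plays $G'^L$, giving $G'^L+X\geqslant H'+X$ (Right-to-move) with $o^R(H'+X)=\mathbf{L}$. In the Right-silent sub-case $G'\geqslant H'^R$, where $H'^R\neq\overline{\infty}$ by the previous observation, Left steers to $G'$ and plays the winning move in $G'+X$ guaranteed by $o^L(G'+X)\geqslant o^L(H'^R+X)=\mathbf{L}$.

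The main obstacle is the ``steering'' step invoked twice above: Left must navigate the forcing tree of $G$ to the specific $G'$ picked by the lock, yet $G'$ depends on the entire commitment $\mathbb{R}(G)$ which Left only discovers incrementally as Right replies. I plan to resolve this by folding a secondary induction on the depth of the forcing tree rooted at $G$ into case~2: when $G$ has no Left-check, $\mathfrak{R}(G)$ contains only $\{G\}$ and the lock forces the witness to be $G'=G$, so Left plays the corresponding silent move or winning first move directly; when Left initiates a check $G^{\overrightarrow{L}}$ and Right is forced to answer some strictly shallower $K=G^{\overrightarrow{L}R}$, the lock on $(G,H^L)$ restricts---by fixing Right's first response---to a lock condition on $(K,H^L)$, which the secondary induction converts into Left's winning continuation from $K+X$. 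Making this restriction compatibility precise, and verifying that the extracted $\mathbb{L}_\tau$ is unaffected by the restriction, is the technical crux of the argument.
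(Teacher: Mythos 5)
Your overall architecture differs from the paper's: you argue directly by induction that a Left win in $H+X$ transfers to $G+X$, whereas the paper argues by contradiction from a distinguishing game $X$ of minimal formal birthday. The difficulty you flag as the ``technical crux'' --- the steering step --- is a genuine gap, and it is precisely the difficulty that the contradiction argument is designed to avoid. In your setup, Left must navigate the Left-forcing-tree of $G$ to the witness $(G',H')$ supplied by the lock, but that witness is a function of an \emph{entire} strategy $\mathbb{R}\in\mathfrak{R}(G)$, i.e., of Right's committed responses to \emph{all} of Left's possible forcing sequences, including checks rooted at nodes Left forgoes the moment she commits to a different first check. Your proposed repair --- fixing Right's first response $K=G^{\overrightarrow{L}R}$ and claiming the lock of $(G,H^L)$ ``restricts'' to a lock of $(K,H^L)$ --- does not go through as stated: for a strategy $\mathbb{R}$ consistent with that first response, the guaranteed witness $G'$ may be $G$ itself or may lie in the subtree of a different check of $G$, neither of which belongs to any element of $\mathfrak{R}(K)$; and nothing in Definition~\ref{def:locked} prevents the witness from migrating among branches as the unobserved parts of $\mathbb{R}$ vary. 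Some uniformization argument would be needed to show Left can choose a first check that is good against all completions, and that is the entire content of the missing step.

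The paper sidesteps this by working in the contrapositive. Supposing $G\not\geqslant H$, it takes $X$ of smallest formal birthday with (say) $o^L(G+X)=\mathbf{R}<o^L(H+X)=\mathbf{L}$ and a winning move $H^L+X$. From the \emph{assumption} that Left loses $G+X$ it extracts a complete strategy $\mathbb{R}(G)$ with $o^L(\overrightarrow{G}+X)=\mathbf{R}$ for every $\overrightarrow{G}\in\mathbb{R}(G)$ (Right's defence exists as a whole object; no discovery by play is needed), and symmetrically a complete $\mathbb{L}(H^L)$ with $o^R(\overleftarrow{H^L}+X)=\mathbf{L}$ throughout. The single summand $X$ then separates every Left-silent $\overrightarrow{G}^L$ from every $\overleftarrow{H^L}$ and every $\overrightarrow{G}$ from every Right-silent $(\overleftarrow{H^L})^R$, so this one pair of strategies breaks the lock --- contradicting $\mathbf{M}^\infty(G,H)$. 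Moves in $X$ are handled by the minimal-birthday choice, much as in your first case (note, though, that induction there yields only the outcome inequality $o^R(G+X^L)\geqslant o^R(H+X^L)$, not the game inequality $G+X^L\geqslant H+X^L$ you assert; the weaker statement suffices). I recommend recasting your argument in this contrapositive form: the ingredients you have assembled (extraction of $\mathbb{L}_\tau(H^L)$ from Left's winning strategy, the role of the lock, the treatment of $\overline{\infty}$-options) all survive, and the steering problem disappears.
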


\begin{proof}
For the sake of contradiction, suppose that $\mathbf{M}^\infty(G,H)$ and $G \not\geqslant H$. According to Definition \ref{def:orderequivalence}, there exists a distinguishing game $X\in\Np\setminus\{\infty,\overline{\infty}\}$ such that either $o^L(G+X)=\mathbf{R}<o^L(H+X)=\mathbf{L}$ or $o^R(G+X)=\mathbf{R}<o^R(H+X)=\mathbf{L}$.

Without loss of generality, assume that $X$ has the smallest formal birthday, and such that $o^L(G+X)=\mathbf{R}<o^L(H+X)=\mathbf{L}$. We can establish the following three facts:

\begin{enumerate}[(a)]

 \item $X$ is Left-silent since $o^L(G+X)=\mathbf{R}$.

  \item If $o^L(H+X)=o^R(H^L+X)=\mathbf{L}$ for some $H^L\in H^{\mathcal{L}}$, then there exists a strategy $\mathbb{L}(H^L)$ such that $o^R(\overleftarrow{H^L}+X)=\mathbf{L}$ for all $\overleftarrow{H^L}\in \mathbb{L}(H^L)$. If this were not the case, Right could maneuver with checks, and we would not have $o^R(H^L+X)=\mathbf{L}$. We can also conclude that, for all $\overleftarrow{H^L}\in \mathbb{L}(H^L)$, we have $o^L((\overleftarrow{H^L})^R+X)=\mathbf{L}$, regardless of the Right option $(\overleftarrow{H^L})^R$. On the other hand, there exists a strategy $\mathbb{R}(G)$ such that $o^L(\overrightarrow{G}+X)=\mathbf{R}$ for all $\overrightarrow{G}\in\mathbb{R}(G)$. If this were not the case, Left could manoeuver with checks and we would not have $o^L(G+X)=\mathbf{R}$.  We can also conclude that, for all $\overrightarrow{G}\in\mathbb{R}(G)$, we have $o^R(\overrightarrow{G}^L+X)=\mathbf{R}$, regardless of the Left option $\overrightarrow{G}^L$. Thus, if $\overrightarrow{G}\in\mathbb{R}(G)$ and $\overleftarrow{H^L}\in\mathbb{L}(H^L)$, there are neither  $\overrightarrow{G}^L\geqslant \overleftarrow{H^L}$ nor $\overrightarrow{G}\geqslant (\overleftarrow{H^L})^R$. This is equivalent to saying that the pair $(\mathbb{R}(G),\mathbb{L}(H^L))$ breaks the $\infty$-maintenance property, which constitutes a contradiction.

  \item According to the initial smallest birthday assumption, $o^R(G+X^L)\geqslant o^R(H+X^L)$.\linebreak If $o^L(H+X)=o^R(H+X^L)=\mathbf{L}$ for some game $X^L\in X^{\mathcal{L}}$, then we conclude that $o^L(G+X)\geqslant o^R(G+X^L)\geqslant o^R(H+X^L)=o^L(H+X)$. Consequently, in that case, $o^L(G+X)=\mathbf{L}$, which is once again a contradiction.
\end{enumerate}

A contradiction is unavoidable given the assumptions. Hence, we have shown that $G\geqslant H$.
\end{proof}

Finally, the main result of this section follows.

\begin{theorem} \label{thm:constructive}Let $G,H \in\Np\setminus\{\infty,\overline{\infty}\}$. We have that $G\geqslant H$ if and only if $\mathbf{M}^\infty(G,H)$.
\end{theorem}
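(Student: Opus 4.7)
The statement is an immediate corollary of the two preceding theorems, so there is essentially no new work to do. The plan is to observe that Theorem~\ref{thm:firstimpli} provides the forward direction $G\geqslant H \Rightarrow \mathbf{M}^\infty(G,H)$, and Theorem~\ref{thm:secondimpli} provides the converse $\mathbf{M}^\infty(G,H) \Rightarrow G\geqslant H$. Putting these together yields the biconditional $G\geqslant H \iff \mathbf{M}^\infty(G,H)$ under the hypothesis that $G,H\in\Np\setminus\{\infty,\overline{\infty}\}$, which is exactly the hypothesis of both component theorems.

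So the proof I would write is literally one or two sentences, citing the two previous theorems. All the actual intellectual content has already been spent: in Theorem~\ref{thm:firstimpli}, constructing the distinguishing game $X$ with appropriate checks $\{\overline{Y}_b\,|\,\overline{\infty}\}$ and $\{\infty\,|\,\overline{X}_a\}$ that expose any pernicious pair witnessing the failure of $\mathbf{M}^\infty$; and in Theorem~\ref{thm:secondimpli}, the minimal-birthday argument combined with the observation that if Left's/Right's forcing strategies in $G$ and $H^L$ (respectively $H$ and $G^R$) fail to interact with each other properly, then the $\infty$-maintenance property itself must fail for the pair $(\mathbb{R}(G),\mathbb{L}(H^L))$.

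The only obstacle, if one exists, is purely stylistic: deciding whether to restate the content explicitly or simply cite. Since this theorem is labeled as the main result of the section and is meant to be cited downstream, I would state the proof compactly as ``The result follows by combining Theorems~\ref{thm:firstimpli} and~\ref{thm:secondimpli}.'' and resist the temptation to repeat either argument. No additional case analysis, no reduction, and no separate handling of $G$ or $H$ being an infinity is needed, since the hypothesis $G,H\in\Np\setminus\{\infty,\overline{\infty}\}$ matches the two lemmas verbatim and the compound operator $\uplus$ discussed in the introduction of Section~\ref{sec:optionsonly} plays no explicit role in the formal statement — its content is fully absorbed into the definition of $\mathbf{M}^\infty$ via strategies of responses (Definition~\ref{def:strategy}) and locked pairs (Definition~\ref{def:locked}).
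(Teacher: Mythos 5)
Your proposal is correct and matches the paper exactly: the paper's proof of Theorem~\ref{thm:constructive} is the single line ``Combine Theorem~\ref{thm:firstimpli} and Theorem~\ref{thm:secondimpli}.'' Your observation that no additional case analysis is needed, since the hypotheses of the two component theorems match verbatim, is also accurate.
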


\begin{proof}
Combine Theorem \ref{thm:firstimpli} and Theorem \ref{thm:secondimpli}.
\end{proof}

Situations with many checks are relatively rare in game practice. In most cases, the traditional maintenance property, as stated in Definition \ref{def:maintenance}, is enough to verify if $G\geqslant H$.

\begin{theorem} \label{thm:traditional}Let $G,H \in\Np\setminus\{\infty,\overline{\infty}\}$. If $\mathbf{M}(G,H)$ then $G\geqslant H$.
\end{theorem}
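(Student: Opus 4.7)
The plan is to prove $G\geqslant H$ directly from Definition \ref{def:orderequivalence}, by induction on the formal birthday of $X$, simultaneously establishing the two one-sided inequalities $o^L(G+X)\geqslant o^L(H+X)$ and $o^R(G+X)\geqslant o^R(H+X)$ for every $X\in\Np\setminus\{\infty,\overline{\infty}\}$. This avoids the detour through $\mathbf{M}^\infty$ and Theorem \ref{thm:constructive}; the traditional maintenance property already supplies the right pivots because it asks only for a single matching response on each side, with no forcing sequences to manage.

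For $o^L(G+X)\geqslant o^L(H+X)$, I assume Left wins $H+X$ playing first. If her winning move is in $X$, say to $H+X^L$ with $o^R(H+X^L)=\mathbf{L}$, the inductive hypothesis at $X^L$ yields $o^R(G+X^L)\geqslant\mathbf{L}$, so the mimicked move $G+X^L$ wins for Left in $G+X$; the edge cases $X^L\in\{\infty,\overline{\infty}\}$ are immediate because $G+\infty=\infty$, while $X^L=\overline{\infty}$ would contradict the assumption. If instead her winning move is to $H^L+X$, I invoke item (2) of $\mathbf{M}(G,H)$ for this specific $H^L$. In the branch where some $G^L\geqslant H^L$, the assumption applied at the \emph{current} $X$ gives $o^R(G^L+X)\geqslant o^R(H^L+X)=\mathbf{L}$, so Left wins $G+X$ via the move to $G^L+X$. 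In the branch where some $H^{LR}$ satisfies $G\geqslant H^{LR}$, the move $H^L+X\to H^{LR}+X$ is one of Right's available responses and must be losing for Right, i.e., $o^L(H^{LR}+X)=\mathbf{L}$; then $G\geqslant H^{LR}$ upgrades this to $o^L(G+X)\geqslant\mathbf{L}$.

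The second inequality $o^R(G+X)\geqslant o^R(H+X)$ is proved by an entirely symmetric argument: Right's winning moves inside $X$ are absorbed by the inductive hypothesis on $X^R$, while Right's winning moves in $G$ to some $G^R$ are analyzed via item (1) of $\mathbf{M}(G,H)$, using either the direct transfer from $G^R\geqslant H^R$, or the pivot $G^{RL}\geqslant H$ (which forces $o^R(G^{RL}+X)=\mathbf{R}$ because Left's response $G^{RL}+X$ from $G^R+X$ must also lose, whereupon $G^{RL}\geqslant H$ gives $o^R(H+X)=\mathbf{R}$). I do not anticipate a genuine obstacle: the only delicacy is to apply the black-box $\geqslant$-hypotheses supplied by $\mathbf{M}(G,H)$ at the current sum with $X$, using the inductive hypothesis solely to step across moves inside $X$, and to dispose of the few edge cases where a child option equals $\infty$ or $\overline{\infty}$.
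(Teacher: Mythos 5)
Your proof is correct, but it takes a genuinely different route from the paper. The paper disposes of this theorem in two lines: it observes that $\mathbf{M}(G,H)$ implies $\mathbf{M}^\infty(G,H)$ (because $G$, $H$, $G^R$, $H^L$ each belong to every strategy of responses rooted at them, so the witnesses supplied by the traditional maintenance property already serve as the locked pairs required by Definition \ref{def:locked}), and then invokes Theorem \ref{thm:constructive}. Your argument instead re-runs the classical inductive proof from scratch: induction on the formal birthday of the distinguishing game $X$, with moves inside $X$ absorbed by the inductive hypothesis and moves in $G$ or $H$ handled by instantiating the black-box inequalities $G^L\geqslant H^L$, $G\geqslant H^{LR}$, $G^R\geqslant H^R$, $G^{RL}\geqslant H$ at the current $X$. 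I checked the delicate points and they hold: the componentwise reading of $o(A)\geqslant o(B)$ is exactly the product order on $(o^L,o^R)$; the case $H^L=\infty$ is forced into the branch $\exists\, G^L\geqslant H^L$, where $G^L\geqslant\infty$ still yields $o^R(G^L+X)=\mathbf{L}$; and in the reversibility branch the option $H^{LR}$ cannot be $\overline{\infty}$ when $o^R(H^L+X)=\mathbf{L}$, so $o^L(H^{LR}+X)=\mathbf{L}$ is legitimately extracted. The trade-off: the paper's proof is essentially free once the heavy machinery of Section \ref{sec:optionsonly} (strategies of responses, pernicious pairs, Theorem \ref{thm:constructive}) is in place, whereas yours is self-contained and elementary, depending only on Definitions \ref{def:maintenance}, \ref{def:individualized}, and \ref{def:orderequivalence}. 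Your version also sidesteps having to verify the silence requirements built into the definition of a locked pair, which the paper's one-line reduction to $\mathbf{M}^\infty$ quietly glosses over; in that sense the direct proof is arguably the more airtight of the two for this particular implication.
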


\begin{proof}
If $\mathbf{M}(G,H)$, then $\mathbf{M}^\infty(G,H)$, since $G$ belongs to every strategy $\mathbb{R}(G)$, $H^L$ belongs to every strategy $\mathbb{L}(H^L)$, $G^R$ belongs to every strategy $\mathbb{R}(G^R)$, and $H$ belongs to every strategy $\mathbb{L}(H)$.

As $\mathbf{M}^\infty(G,H)$, Theorem \ref{thm:constructive} ensures that $G\geqslant H$.
\end{proof}

\section{Reductions and reduced forms}
\label{sec:reductions}

\vspace{0.3cm}
In $\Np$, the game reductions are essentially the same as in $\Npc$. The following three theorems are stated from Left's perspective, but, of course, there are dual statements from Right's perspective.

\emph{Domination} captures the idea that if a player has an option at least as good as another, that other option becomes superfluous.

\begin{theorem}[Domination]\label{thm:domination}
If $G\in \Np$  is a game such that $G^{L_1}, G^{L_2}\in \GL$ and\linebreak $G^{L_2}\geqslant G^{L_1}$, then $G=\{\GL\setminus\{G^{L_1}\}\mid \GR\}$.
\end{theorem}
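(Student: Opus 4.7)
The plan is to prove equality in both directions: $G \geqslant \{\GL\setminus\{G^{L_1}\}\mid \GR\}$ and $\{\GL\setminus\{G^{L_1}\}\mid \GR\} \geqslant G$. Write $G' = \{\GL\setminus\{G^{L_1}\}\mid \GR\}$ for brevity. Note that $G$ cannot be $\infty$ or $\overline{\infty}$ since $\GL$ contains at least two elements, and $G'$ is a legitimate affine game because $G^{L_2}\in\GL\setminus\{G^{L_1}\}$ guarantees that the Left option set of $G'$ is non-empty.

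For the direction $G \geqslant G'$, the cleanest approach is to invoke the Monotone Principle (Theorem~\ref{thm:greediness}) applied to $G'$: since $G = \{G'^{\mathcal{L}} \cup \{G^{L_1}\} \mid G'^{\mathcal{R}}\}$, the principle immediately gives $G \geqslant G'$.

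For the direction $G' \geqslant G$, I will verify the traditional maintenance property $\mathbf{M}(G', G)$ from Definition~\ref{def:maintenance}, which by Theorem~\ref{thm:traditional} suffices to conclude $G' \geqslant G$. There are two clauses to check. First, for every $G'^R \in G'^{\mathcal{R}} = \GR$, I pick $G^R = G'^R$ itself; reflexivity gives $G'^R \geqslant G^R$. Second, for every $G^L \in \GL$, I split into two cases: if $G^L \neq G^{L_1}$, then $G^L \in G'^{\mathcal{L}}$ and I take $G'^L = G^L$, obtaining $G'^L \geqslant G^L$ by reflexivity; if $G^L = G^{L_1}$, then I take $G'^L = G^{L_2}$, which lies in $G'^{\mathcal{L}}$ and satisfies $G^{L_2} \geqslant G^{L_1}$ by hypothesis. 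Both clauses of $\mathbf{M}(G', G)$ are verified.

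There is no real obstacle here: the proof mirrors the classical domination argument, and the novelty of $\Np$ (forcing sequences of checks, the weaker $\infty$-maintenance property) is not needed because the dominated option is replaced one-for-one by the dominating option, and all Right options are preserved. The only mild subtlety worth flagging in the write-up is the non-emptiness of $G'^{\mathcal{L}}$, which is ensured by the fact that $G^{L_2}$ remains available after $G^{L_1}$ is removed (and one should implicitly read the hypothesis as $G^{L_2} \neq G^{L_1}$, else the statement is vacuous).
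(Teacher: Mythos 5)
Your proof is correct, but it takes a genuinely different route from the paper's. The paper proves domination directly from Definition~\ref{def:orderequivalence}: it fixes an arbitrary $X$ and shows $o(G+X)=o(G'+X)$ by induction and strategy mimicry, the only non-mimicked case being a Left winning move to $G^{L_1}+X$, which is replaced by $G^{L_2}+X$ using $G^{L_2}\geqslant G^{L_1}$. You instead assemble the result from two earlier theorems: the Monotone Principle (Theorem~\ref{thm:greediness}) gives $G\geqslant G'$ essentially for free, and the traditional maintenance criterion (Definition~\ref{def:maintenance} together with Theorem~\ref{thm:traditional}) gives $G'\geqslant G$, with the dominated option $G^{L_1}$ answered by $G^{L_2}$ in the second clause. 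Both verifications of $\mathbf{M}(G',G)$ are carried out correctly, and since Theorems~\ref{thm:greediness} and~\ref{thm:traditional} are established before Section~\ref{sec:reductions}, there is no circularity. What your route buys is brevity and modularity — no explicit induction over distinguishing games $X$ is needed. What it costs is a logical dependence of domination on the full options-only comparison machinery of Section~\ref{sec:optionsonly} (Theorems~\ref{thm:firstimpli}--\ref{thm:traditional}), whereas the paper's argument is elementary and self-contained, relying only on the definition of equivalence. Your remark that the hypothesis must be read with $G^{L_1}$ and $G^{L_2}$ as distinct options (so that $\GL\setminus\{G^{L_1}\}$ retains the dominating option and is non-empty) is a fair observation that the paper leaves implicit as well.
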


\begin{proof}
Let $G'=\{\GL\setminus\{G^{L_1}\}\mid \GR\}$. We want to verify that $o^L(G + X)= o^L(G' + X)$ and $o^R(G + X)= o^R(G' + X)$ for every game $X$ different from $\infty$ and $\overline{\infty}$.

If Left wins $G'+X$ with a move $G'+X^L$, then, by induction, she also wins $G+X$ with the move $G+X^L$. Analogously, if Left wins $G+X$ with a move $G+X^L$, then, by induction, she also wins $G'+X$ with the move $G'+X^L$. The argument for the case where Right plays first in the component $X$ is similar.

Regarding moves other than in the component $X$, the only situation that escapes direct mimicry pertains to a possible winning move by Left in $G+X$ to $G^{L_1}+X$. In that case, playing first, she also wins $G'+X$ by moving to $G^{L_2}+X$, since $G^{L_2}\geqslant G^{L_1}$.
\end{proof}

\emph{Open reversibility} captures the idea that if a player has an option in a component against which the opponent, in a worst-case scenario, can respond in a way to improve their situation, the player chooses that option only if they intend to continue playing locally. If the intention is to play in another component, it is better to play in that component right from the start, thus avoiding the opportunity for the mentioned opponent's improvement.

\begin{theorem}[Open Reversibility]\label{thm:oreversibility}
If $G\in \Np$ is a game with $G^{L}\in \GL$ and $G^{LR}\neq \overline{\infty}$ such that $G\geqslant G^{LR}$, then $G=\{\GL\setminus\{G^{L}\},G^{LR\mathcal{L}} \mid \GR\}$.
\end{theorem}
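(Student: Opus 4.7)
Set $G^\dagger = \{\GL\setminus\{G^L\},\, G^{LR\mathcal{L}}\mid \GR\}$. The plan is to establish $G=G^\dagger$ by proving $G\geqslant G^\dagger$ and $G^\dagger\geqslant G$ separately, each through the $\infty$-maintenance property and an appeal to Theorem~\ref{thm:constructive}. The engine powering both directions is Theorem~\ref{thm:firstimpli} applied to the hypothesis $G\geqslant G^{LR}$, which yields $\mathbf{M}^\infty(G,G^{LR})$ and thus connects followers of $G^{LR}$ to followers of $G$.

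For $G\geqslant G^\dagger$, I would verify $\mathbf{M}^\infty(G, G^\dagger)$. The Right options of $G^\dagger$ coincide with those of $G$, so for each $G^R\in\GR$ the pair $(G^R, G^\dagger)$ can be locked by reusing the strategies that lock $(G^R, G)$, the latter being available from the reflexive relation $G\geqslant G$ together with Theorem~\ref{thm:firstimpli}. The Left options of $G^\dagger$ split into two cases: those in $\GL\setminus\{G^L\}$ are handled by reflexivity on that option; an added option $G^{LRL}\in G^{LR\mathcal{L}}$ is the interesting case, and here one uses $\mathbf{M}^\infty(G, G^{LR})$ at the Left option $G^{LRL}$ of $G^{LR}$, extracting either a Right-silent Right option of $G^{LRL}$ dominated by $G$ or a Left-silent option of $G$ dominating $G^{LRL}$, which is exactly what the locked pair $(G, G^{LRL})$ requires.

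For $G^\dagger\geqslant G$, I would verify $\mathbf{M}^\infty(G^\dagger, G)$. Right options remain matched, and Left options of $G$ other than $G^L$ survive verbatim in $G^\dagger$, both handled by reflexivity. The critical case is the Left option $G^L$. My plan here is to use the reversal $G^L\to G^{LR}$ as a Right-silent reversal inside the locked-pair clause, legitimate because $G^{LR}\neq\overline{\infty}$. This needs the auxiliary inequality $G^\dagger\geqslant G^{LR}$, itself established by another application of Theorem~\ref{thm:constructive}: the Left options of $G^{LR}$, being exactly $G^{LR\mathcal{L}}$, are literal Left options of $G^\dagger$, producing immediate Left-silent dominance; and the Right options of $G^\dagger$, being those of $G$, can be matched to those of $G^{LR}$ through $\mathbf{M}^\infty(G, G^{LR})$.

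The hard part will be the sub-claim $G^\dagger\geqslant G^{LR}$ in the $G^\dagger\geqslant G$ direction. One must propagate the response-strategy machinery carefully, because $\mathfrak{R}(G^\dagger)$ need not coincide with $\mathfrak{R}(G)$: the deleted option $G^L$ could have been a Left-check, and the newly introduced options in $G^{LR\mathcal{L}}$ may contribute fresh Left-checks to $G^\dagger$, altering the tree of Right responses. Once this sub-claim is secured, the locked-pair condition at $(G^\dagger, G^L)$ reduces to the silent-reversal justification described above, closing the proof.
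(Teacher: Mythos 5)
Your architecture---establishing both inequalities by verifying the $\infty$-maintenance property and invoking Theorem~\ref{thm:constructive}---is a genuinely different route from the paper's, which argues directly from Definition~\ref{def:orderequivalence}: for each distinguishing game $X$ it handles moves in $X$ by induction and mimicry, funnels the only two non-mimicry moves ($G\to G^L$ in one direction, $G^\dagger\to G^{LRL}$ in the other) through the single auxiliary inequality $G^\dagger\geqslant G^{LR}$, and proves that auxiliary inequality by the same elementary outcome argument. You correctly isolate $G^\dagger\geqslant G^{LR}$ as the pivot, which matches the paper. But your route, as written, has genuine gaps, all traceable to the fine print of Definition~\ref{def:locked}.

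A locked-pair witness must be a \emph{silent} option in the sense of Definition~\ref{def:silent}, and in at least two places you substitute a weaker property. You justify using $G^{LR}$ as a ``Right-silent reversal'' by $G^{LR}\neq\overline{\infty}$, but Right-silent means $G^{LR}$ is not a Right-check, i.e.\ that $\overline{\infty}$ is not among its Right options; a game such as $\{0\,|\,\overline{\infty}\}$ is different from $\overline{\infty}$ yet inadmissible as a witness. Likewise, ``immediate Left-silent dominance'' via $G^{LRL}\geqslant G^{LRL}$ fails whenever $G^{LRL}$ is a Left-check. In either case the witness must instead be hunted deeper inside the response-strategy trees, and nothing in your plan does that. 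Separately, the transfer of locked pairs from $(G^R,G)$ to $(G^R,G^\dagger)$ is circular in one branch: if the witness for $(G^R,G)$ is a Left-silent $G'^L$ with $G'^L\geqslant G$ taken against the root $H'=G$, then converting it into a witness against the root $G^\dagger$ requires $G'^L\geqslant G^\dagger$, which presupposes the very inequality $G\geqslant G^\dagger$ under proof. The paper never meets these difficulties because it never touches strategies or the silent/check distinction; to salvage your approach you would need a stability lemma for locked pairs under the replacement $G\mapsto G^\dagger$, and that lemma is where the real work of the theorem would reappear.
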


\begin{proof}

We note that $G^{LR\mathcal{L}}$ is not empty. This happens because $G^{LR}$ cannot be $\infty$ due to the condition $G\geqslant G^{LR}$ and it cannot be $\overline{\infty}$ due to the condition $G^{LR}\neq \overline{\infty}$. Let $H=\{\GL\setminus\{G^{L}\},G^{LR\mathcal{L}} \mid \GR\}$. We want to prove that $H= G$, i.e., $G\geqslant H$ and  $H\geqslant G$.

First, we prove that $H\geqslant G^{LR}$. Let $X$ be a game different from $\infty$ and $\overline{\infty}$. If Left wins $G^{LR}+X$ with $G^{LR}+X^L$, then, by induction, she also wins $H+X$ with the move $H+X^L$. If Left wins $G^{LR}+X$ with a move $G^{LRL}+X$, she also wins $H+X$ with the move $G^{LRL}+X$, since $G^{LRL}\in H^\mathcal{L}$. If Right, playing first, wins $H+X$ with $H+X^R$, then, by induction, she also wins $G^{LR}+X$ with the move $G^{LR}+X^R$. Suppose now that Right, playing first, wins $H+X$ with $H^R+X$. Since $H^R\in \GR$, we have that $H^R+X\in(G+X)^\mathcal{R}$. Consequently, Right has a winning move in $G+X$, and due to the fact that $G\geqslant G^{LR}$, he also has a winning move in $G^{LR}+X$.

Second, we prove that $H\geqslant G$. The only case whose analysis is not straightforward is a possible winning move for Left from $G+X$ to $G^L+X$. In that case, it is mandatory for Left, playing first, to have a winning move in $G^{LR}+X$. As we already know that $H\geqslant G^{LR}$, Left must have a winning move in $H+X$ as well.

Third, we prove that $G\geqslant H$. The only cases whose analysis is not straightforward are possible winning moves for Left from $H+X$ to $G^{LRL}+X$. Since these winning moves are also available in $G^{LR}+X$, the fact that $G\geqslant G^{LR}$ implies that Left must also have a winning move in $G+X$.
\end{proof}

\emph{Absorbing reversibility} captures the idea that if a player has an option that allows the opponent to make a winning terminating move, in practice, that option works as if the player were directly committing suicide.

\begin{theorem}[Absorbing Reversibility]\label{thm:areversibility}
If $G\in \Np$ is a game with a Left option \linebreak $G^{L}=\{G^{L\mathcal{L}}\,|\,\overline{\infty}\}$, then $G=\{\GL\setminus\{G^{L}\},\overline{\infty} \mid \GR\}$.
\end{theorem}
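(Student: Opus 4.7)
Let $G^{L_0}=\{G^{L_0\mathcal{L}}\,|\,\overline{\infty}\}$ denote the distinguished Left option in the hypothesis, and set $H=\{\GL\setminus\{G^{L_0}\},\overline{\infty}\mid\GR\}$. The goal is to establish $G=H$, and the plan is to do so by proving the two inequalities $G\geqslant H$ and $H\geqslant G$ via the traditional maintenance property of Definition~\ref{def:maintenance}, after which Theorem~\ref{thm:traditional} delivers both inequalities and hence the desired equality. The underlying intuition is simply that the option $G^{L_0}$ is as bad for Left as the suicidal move $\overline{\infty}$, because after it Right can immediately terminate with $\overline{\infty}$; but we do not need to formalize this directly, only to read it off the maintenance condition.

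First I would verify $\mathbf{M}(G,H)$. Since $H^\mathcal{R}=\GR$, condition~(1) is immediate: for every $G^R\in\GR$ take $H^R=G^R$ to get $G^R\geqslant H^R$. For condition~(2), fix $H^L\in\HL$. If $H^L\in\GL\setminus\{G^{L_0}\}$, then $H^L$ is itself a Left option of $G$ and witnesses the condition via $H^L\geqslant H^L$. If instead $H^L=\overline{\infty}$, then any $G^L\in\GL$ (for instance $G^{L_0}$ itself) satisfies $G^L\geqslant\overline{\infty}$ by Theorem~\ref{thm:inflarge}. Thus $\mathbf{M}(G,H)$ holds and Theorem~\ref{thm:traditional} yields $G\geqslant H$.

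Second I would verify $\mathbf{M}(H,G)$. Condition~(1) is again immediate because $\HR=\GR$: for $H^R\in\HR$ take $G^R=H^R$. Condition~(2) is the only place where the specific form of $G^{L_0}$ enters. For $G^L\in\GL$: if $G^L\neq G^{L_0}$, then $G^L\in\HL$ and the condition is witnessed by $H^L=G^L$. If $G^L=G^{L_0}$, then the Right option set of $G^{L_0}$ is precisely $\{\overline{\infty}\}$, so the only available $G^{LR}$ is $\overline{\infty}$; and by Theorem~\ref{thm:inflarge} we have $H\geqslant\overline{\infty}$, which discharges this case. Therefore $\mathbf{M}(H,G)$ holds, Theorem~\ref{thm:traditional} gives $H\geqslant G$, and combining with the previous paragraph we conclude $G=H$.

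There is no genuine obstacle here; the only delicate check is the second branch of condition~(2) in $\mathbf{M}(H,G)$, where one must use the hypothesis that $\overline{\infty}$ is the \emph{only} Right option of $G^{L_0}$ so that picking $G^{LR}=\overline{\infty}$ is forced and legitimate, and then rely on $\overline{\infty}$ being the bottom element of the order. No induction or recursion on formal birthdays is needed; the result follows purely from the options-level maintenance criterion.
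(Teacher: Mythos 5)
Your proof is correct, but it takes a genuinely different route from the paper's. The paper argues directly from Definition~\ref{def:orderequivalence}: for an arbitrary distinguishing game $X$ it transfers winning strategies between $G+X$ and $H+X$ by mimicry and induction, noting that the extra Left option $\overline{\infty}$ in $H$ is suicidal (so Left never needs it) and that the deleted option $G^{L}=\{G^{L\mathcal{L}}\,|\,\overline{\infty}\}$ is refuted by Right's immediate reply to $\overline{\infty}+X$ (so Left never profits from it). You instead verify the traditional maintenance property in both directions, $\mathbf{M}(G,H)$ and $\mathbf{M}(H,G)$, with witnesses supplied by reflexivity and by Theorem~\ref{thm:inflarge}, and then invoke Theorem~\ref{thm:traditional} twice. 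Your case analysis is complete and the one delicate step --- that for $G^L=G^{L_0}$ the unique reply $G^{LR}=\overline{\infty}$ satisfies $H\geqslant\overline{\infty}$ --- is handled correctly; note that it encodes exactly the same observation the paper makes in strategic terms (Right answers the reinstated option with $\overline{\infty}$). What each approach buys: yours is shorter, purely options-level, and needs no induction on $X$, but it rests on the comparatively heavy machinery of Section~\ref{sec:optionsonly} (Theorem~\ref{thm:traditional} sits on top of the $\infty$-maintenance results), whereas the paper's argument is elementary and self-contained within Section~\ref{sec:reductions}. There is no circularity in your use of Theorem~\ref{thm:traditional}, since it precedes the reduction theorems, and the paper itself argues in exactly your style for later results such as Theorems~\ref{thm:smallest} and~\ref{thm:idempotents}.
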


\begin{proof}
Let $H=\{\GL\setminus\{G^{L}\},\overline{\infty} \mid \GR\}$. We want to prove that $H=G$, i.e., $G\geqslant H$ and  $H\geqslant G$.

First, we prove that $G\geqslant H$. Let $X$ be a game different from $\infty$ and $\overline{\infty}$. If Left wins $H+X$ with $H+X^L$, then, by induction, she also wins $G+X$ with the move $G+X^L$. Regarding moves other than in the component $X$, the only Left option in $H+X$ that escapes direct mimicry is $\overline{\infty}+X$. However, there is no need to consider that possibility, as it is a suicidal move for Left. To prove that the existence of a winning move for Right in $G+X$ implies the existence of a winning move for Right in $H+X$, it is sufficient to use induction and mimicry.

Second, we prove that $H\geqslant G$. Once again, winning moves in component $X$ are addressed by induction. Regarding moves other than in the component $X$, the only Left option in $G+X$ that escapes direct mimicry is $\{G^{L\mathcal{L}}\,|\,\overline{\infty}\}+X$. However, there is no need to consider that possibility, as Right can win by responding to $\overline{\infty}+X$. To prove that the existence of a winning move for Right in $H+X$ implies the existence of a winning move for Right in $G+X$, it is sufficient to use induction and mimicry.
\end{proof}

Similar to what is formalized in classical theory, a game form $G$ is reduced if none of its followers admits any of the three reductions stated in Theorems \ref{thm:domination}, \ref{thm:oreversibility}, and \ref{thm:areversibility} (also in terms of dual formulations according to the Right's perspective). A very interesting fact is that there can be two reduced forms, $G$ and $G'$, such that $G=G'$ and $G\not\cong G'$, meaning the uniqueness of reduced forms is lost. Once again, it is a consequence of checks and forcing sequences. Still, this fact is not a serious issue, since reduced forms can still be used in proofs and for the purpose of simplifications.
The following games constitute an example of this occurrence.\\

\vspace{-1cm}
$$G=\{\{\infty\,|\,\{0,*2\,|\,\overline{\infty}\},\{*,*3\,|\,\overline{\infty}\}\},\{\infty\,|\,\{0,*\,|\,\overline{\infty}\},\{*2,*3\,|\,\overline{\infty}\}\}\,\|\,\overline{\infty}\}\text{, and}$$ $$H=\{\{\infty\,|\,\{0,*2\,|\,\overline{\infty}\},\{*,*3\,|\,\overline{\infty}\}\},\{\infty\,|\,\{0,*3\,|\,\overline{\infty}\},\{*,*2\,|\,\overline{\infty}\}\}\,\|\,\overline{\infty}\}.$$

\vspace{0.3cm}
We have that $G\not\cong H$ because, although the game forms share a Left-check, there is a pair of distinct Left-checks. On the other hand, a careful application of Theorem \ref{thm:constructive} allows us to verify that $G\geqslant H$ and $H\geqslant G$, i.e., $G=H$. The same theorem allows us to check that there are no reducible followers in either of the two game forms.

In practice, what happens is that, although there are different checks, the control provided by its forcing nature is essentially the same. The following definition formalizes the concept of reduced form.

\begin{definition}[Reduced Forms]\label{def:reduced}
An affine game form $G$ is a \emph{reduced form} if the following two conditions are satisfied:
\begin{itemize}
  \item None of the followers of $G$ admits any of the three reductions stated in Theorems~\ref{thm:domination}, \ref{thm:oreversibility}, and \ref{thm:areversibility};
  \item There is no $G'$ such that $G'=G$ and $\tilde{b}(G')<\tilde{b}(G)$.
\end{itemize}
\end{definition}

\section{Conway-embedding and invertibility}
\label{sec:cembedding}

\vspace{0.3cm}
It has been emphasized that the effect of the absence of moves (empty set) can be achieved in $\Np$ by choosing $G^\mathcal{L}=\{\overline{\infty}\}$ or $G^\mathcal{R}=\{\infty\}$ for the player without moves, as a single suicidal move is the same as having no moves at all. In this section, we formalize this idea, proving that the traditional Conway values are order embedded in the affine structure. In fact, we will prove that the group structure of Conway values is effectively the substructure of the invertible elements of $\Np$.

\begin{definition}[Conway Forms and Games]\label{def:conw}
A game form $G\in \Np$ is a \emph{Conway form} if $G$ is distinct from $\infty$ and $\overline{\infty}$, and has no checks as followers. Let $\Nc\subseteq\Np$ denote the substructure of Conway forms. A game form $G\in \Np$ is a \emph{Conway game} if it is equal to a Conway form in terms of equivalence of games. A game form $G\in \Np$ is a \emph{pressing game} if it is not a Conway game.
\end{definition}

\begin{example}
The game $G=\{\{\overline{\infty}\,|\,\infty\}\,|\,\{\overline{\infty}\,|\,\infty\}\} =\{0\,|\,0\}=*$ is a Conway form (the game form has no checks as followers). The game $G'=\{\{\infty\,|\,*\}\,|\,\{*\,|\,\overline{\infty}\}\}$ is not a Conway form because it has checks as followers. However, since $G'=G$, it is a Conway game.
\end{example}

\begin{definition}\label{def:conmap}
The \emph{Conway map} $c:\Nc\rightarrow\Npc$ is recursively defined in the following way:
\[c(G)=  \begin{cases}
0=\{\emptyset\,|\,\emptyset\} \quad\quad\quad\quad\quad\textrm{ if }G=0=\{\overline{\infty}\,|\,\infty\};\\
\{\emptyset\,|\,c(\GR)\} \quad\quad\quad\quad\,\,\,\,\textrm{ if }G=\{\overline{\infty}\,|\,\GR\}\textrm{ and }\GR\neq\{\infty\};\\
\{c(\GL)\,|\,\emptyset\} \quad\quad\quad\quad\,\,\,\,\,\textrm{ if }G=\{\GL\,|\,\infty\}\textrm{ and }\GL\neq\{\overline{\infty}\};\\
\{c(\GL)\,|\,c(\GR)\} \quad\quad\,\,\,\,\textrm{ otherwise.}
\end{cases}
\]

Here, $c(\GL)$ means $\{c(G^L)|G^L\in\GL\}$, and the same for $c(\GR)$.
\end{definition}

\begin{lemma}\label{lem:conjcon}
If $G\in\Nc$, then $c(-G)=-c(G)$.
\end{lemma}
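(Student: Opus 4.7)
I would prove this by structural induction on $G$, following the four-way case split in Definition \ref{def:conmap}. The guiding intuition is that conjugation on $\Np$ swaps Left with Right and swaps the symbols $\infty$ with $\overline{\infty}$, and the four cases of the definition of $c$ are set up symmetrically enough that this swap simply permutes them: the base case $G=0$ is fixed, the two ``one-sided'' cases (Left forced to $\overline{\infty}$, and Right forced to $\infty$) are swapped with each other, and the ``otherwise'' case is fixed. So each case of the induction should reduce to matching the image of the branch under conjugation and invoking the inductive hypothesis elementwise on the options.

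The \textbf{base case} $G=0=\{\overline{\infty}\,|\,\infty\}$ is immediate, because $-G=\{-\infty\,|\,-\overline{\infty}\}=\{\overline{\infty}\,|\,\infty\}=0$, and $-0=0$ holds both in $\Np$ and in $\Npc$, so $c(-G)=0=-c(G)$. For the \textbf{inductive step} I would handle the three remaining branches of Definition~\ref{def:conmap}. In the case $G=\{\overline{\infty}\,|\,\GR\}$ with $\GR\neq\{\infty\}$, conjugation gives $-G=\{-\GR\,|\,\infty\}$ with $-\GR\neq\{\overline{\infty}\}$, so $-G$ lands in the third branch of the definition of $c$; then $c(-G)=\{c(-\GR)\,|\,\emptyset\}$ matches $-c(G)=\{-c(\GR)\,|\,\emptyset\}$ after applying the inductive hypothesis elementwise to $\GR$. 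The case $G=\{\GL\,|\,\infty\}$ with $\GL\neq\{\overline{\infty}\}$ is completely symmetric. In the final ``otherwise'' case, the hypotheses $\GL\neq\{\overline{\infty}\}$ and $\GR\neq\{\infty\}$ are preserved by conjugation, so $-G=\{-\GR\,|\,-\GL\}$ also falls into the ``otherwise'' branch, and the identity $c(-G)=\{c(-\GR)\,|\,c(-\GL)\}=\{-c(\GR)\,|\,-c(\GL)\}=-c(G)$ follows by applying the inductive hypothesis to every $G^L\in\GL$ and $G^R\in\GR$.

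The only items that need care are bookkeeping. First, one must verify that conjugation preserves the Conway-form property so that the inductive hypothesis is applicable; this holds because a Left-check (having $\infty$ as a Left option) conjugates to a Right-check (having $\overline{\infty}$ as a Right option) and vice versa, so being check-free on the followers is preserved, and none of $G$, $-G$ can be $\infty$ or $\overline{\infty}$. Second, one must check that conjugation sends each branch of Definition \ref{def:conmap} to its mirror, which is exactly the bijection between the conditions $\GR=\{\infty\}$ and $-\GR=\{\overline{\infty}\}$ (and similarly on the Left side). Neither is a deep obstacle; the lemma is the expected compatibility between the Conway map and conjugation, and the proof is essentially an unwinding of the definitions.
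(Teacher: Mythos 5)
Your proof is correct and is essentially the paper's own argument written out in full: the paper simply declares the lemma an immediate consequence of Definitions \ref{def:conjugate} and \ref{def:conmap}, and your case-by-case induction (with the observation that conjugation fixes the base and ``otherwise'' branches and swaps the two one-sided branches) is exactly the unwinding that claim implicitly relies on. No gaps; the bookkeeping points you flag are handled correctly.
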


\begin{proof}
It is an immediate consequence of the Definitions \ref{def:conjugate} and \ref{def:conmap}.
\end{proof}

\begin{theorem}\label{thm:onetoone}
The Conway map is a one-to-one map.
\end{theorem}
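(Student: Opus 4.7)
The plan is to prove injectivity by strong induction on the formal birthday, with a case analysis keyed to the four-way split in Definition~\ref{def:conmap}. The crucial observation is that the image $c(G)$ has empty Left-options exactly when $\GL=\{\overline{\infty}\}$, and empty Right-options exactly when $\GR=\{\infty\}$, so the emptiness pattern of $c(G)^\mathcal{L}$ and $c(G)^\mathcal{R}$ already reveals which branch of the recursive definition produced $G$. Hence, given $c(G)=c(H)$, the forms $G$ and $H$ must arise from the same clause of the definition.

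For the base case, I would note that the Conway forms of formal birthday $0$ must have all options in $\{\infty,\overline{\infty}\}$, and the no-check requirement together with $G\neq\infty,\overline{\infty}$ rules out $\{\infty|\infty\}$, $\{\overline{\infty}|\overline{\infty}\}$, and $\{\infty|\overline{\infty}\}$. This leaves only $G=\{\overline{\infty}|\infty\}=0$, whose image is $\{\emptyset|\emptyset\}$, and for no other Conway form $H$ does $c(H)=\{\emptyset|\emptyset\}$ hold (in each of clauses~2,~3,~4 of Definition~\ref{def:conmap} at least one side of $c(H)$ is non-empty). Thus the map is injective on birthday-$0$ Conway forms.

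For the inductive step, assume $c$ is injective on Conway forms of birthday strictly less than $n$, and let $G,H\in\Nc$ with $\tilde b(G),\tilde b(H)\leqslant n$ satisfy $c(G)=c(H)$. The emptiness alignment gives four parallel sub-cases. In the ``otherwise'' clause, $c(G)=\{c(\GL)|c(\GR)\}=\{c(\HL)|c(\HR)\}=c(H)$ as classical game forms gives the set equalities $\{c(G^L):G^L\in\GL\}=\{c(H^L):H^L\in\HL\}$ and similarly on the Right. Since every element of $\GL\cup\HL\cup\GR\cup\HR$ is a Conway form of strictly smaller birthday, the inductive hypothesis makes $c$ injective on this pool, so the set equality of images lifts to $\GL=\HL$ and $\GR=\HR$, whence $G\cong H$. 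The clauses with a dead ``$\overline{\infty}$'' Left-side or dead ``$\infty$'' Right-side are analogous, recovering $\GR=\HR$ (respectively $\GL=\HL$) by the same argument on the surviving side.

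The main thing to check carefully is the matching between cases and the handling of the clauses involving $\overline{\infty}$ or $\infty$ as sole options: one must verify that no form falling into clause~2 can have the same image as one falling into clause~3 or clause~4, which is exactly what the emptiness-of-sides bookkeeping accomplishes (clause~2 forces $c(G)^\mathcal{L}=\emptyset$ and $c(G)^\mathcal{R}\neq\emptyset$, clause~3 the mirror, and clause~4 both sides non-empty). No appeal to Theorem~\ref{thm:constructive} or to the reduction theorems is needed here since the claim is about equality of forms under $c$, not equality of games; the work is entirely structural.
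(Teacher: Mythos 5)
Your proof is correct, but it proceeds differently from the paper's. Rather than arguing injectivity directly, the paper writes down an explicit candidate inverse $c^{-1}:\Npc\rightarrow\Nc$ (restoring a sole suicidal option $\overline{\infty}$ or $\infty$ wherever an option set is empty) and verifies by induction over the same four clauses that the composites are the identity. Your argument dispenses with the inverse construction and instead isolates exactly the piece of information needed for injectivity: the emptiness pattern of the two sides of $c(G)$ determines which clause of Definition~\ref{def:conmap} was applied, so $c(G)=c(H)$ forces the two forms into the same clause, after which the set equalities of images lift to $\GL=\HL$ and $\GR=\HR$ by the inductive hypothesis applied to options of strictly smaller birthday. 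The paper's route buys slightly more than the stated theorem --- a two-sided inverse makes $c$ a bijection between $\Nc$ and $\Npc$, which is the form in which the map is actually used later (e.g.\ $c^{-1}$ is applied to arbitrary classical forms in Theorem~\ref{thm:embedding}) --- whereas yours proves exactly what is claimed with no auxiliary map. One caveat applies equally to both proofs: the ``otherwise'' clause of Definition~\ref{def:conmap} tacitly assumes that $\overline{\infty}$ (resp.\ $\infty$) never occurs in $\GL$ (resp.\ $\GR$) alongside other options, since otherwise $c$ would have to be evaluated at $\overline{\infty}\notin\Nc$; granting that convention, your step asserting that all surviving options are Conway forms of smaller birthday is sound.
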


\begin{proof}
We define first the following map from $\Npc$ to $\Npc^{\text{C}}$:
\[c^{-1}(G)=  \begin{cases}
0=\{\overline{\infty}\,|\,\infty\} \quad\quad\quad\quad\quad\,\,\,\;\textrm{ \emph{if} }G=0=\{\emptyset\,|\,\emptyset\};\\
\{\overline{\infty}\,|\,c^{-1}(\GR)\} \quad\quad\quad\quad\,\,\,\,\textrm{ \emph{if} }G=\{\emptyset\,|\,\GR\}\textrm{ \emph{and} }\GR\neq\emptyset;\\
\{c^{-1}(\GL)\,|\,\infty\} \quad\quad\quad\quad\,\,\,\,\,\textrm{ \emph{if} }G=\{\GL\,|\,\emptyset\}\textrm{ \emph{and} }\GL\neq\emptyset;\\
\{c^{-1}(\GL)\,|\,c^{-1}(\GR)\} \quad\quad\textrm{ \emph{otherwise}.}
\end{cases}
\]

Now, we prove that $c^{-1}c=\mathbf{1}$, where $\mathbf{1}$ is the identity map. Proving that $cc^{-1}$ is also the identity is entirely analogous, so we refrain from doing it.  The proof is carried out for each of the four cases of the definitions of the maps, using induction:\\

\noindent
$\bullet\,\,cc^{-1}(0)=cc^{-1}(\{\emptyset\,|\,\emptyset\})=c(\{\overline{\infty}\,|\,\infty\})=\{\emptyset\,|\,\emptyset\}=0$;\\

\noindent
$\bullet\,\,cc^{-1}(\{\emptyset\,|\,\GR\})=c(\{\overline{\infty}\,|\,c^{-1}(\GR)\})=\{\emptyset\,|\,cc^{-1}(\GR)\}\underbrace{=}_{\text{ind.}}\{\emptyset\,|\,\GR\}$;\\

\noindent
$\bullet\,\,cc^{-1}(\{\GL\,|\,\emptyset\})=c(\{c^{-1}(\GR)\,|\,\infty\})=\{cc^{-1}(\GL)\,|\,\emptyset\}\underbrace{=}_{\text{ind.}}\{\GL\,|\,\emptyset\}$;\\

\noindent
$\bullet\,\,cc^{-1}(\{\GL\,|\,\GR\})=c(\{c^{-1}(\GL)\,|\,c^{-1}(\GR)\})=\{cc^{-1}(\GL)\,|\,cc^{-1}(\GR)\}\underbrace{=}_{\text{ind.}}\{\GL\,|\,\GR\}$.\\
\end{proof}

\begin{observation}
The game tree of a Conway form has no small arrows and is identical to the game tree of the corresponding form in $\Npc$.
\end{observation}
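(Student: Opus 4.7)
The plan is to prove both clauses by structural induction on $G$, matching the four cases of Definition~\ref{def:conmap} against the two drawing conventions used for game trees (no edge for a sole suicidal option, small arrow for a winning terminating option).

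For the ``no small arrows'' part, note that a small arrow sits in the tree of $G$ at any node whose corresponding follower $K$ admits a winning terminating option, that is, $\infty\in K^{\mathcal{L}}$ or $\overline{\infty}\in K^{\mathcal{R}}$. By the definition of check, such a $K$ is then a check. Since $G$ is a Conway form, no follower of $G$ is a check, so no small arrows appear in its tree.

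For the ``identical tree'' part, I would induct on $\tilde{b}(G)$, case by case. In the base case $G=\{\overline{\infty}\mid\infty\}$, both options are sole suicides, so both edges are suppressed and the tree reduces to a single node, agreeing with the tree of $c(G)=\{\emptyset\mid\emptyset\}$ in $\Npc$. In case~2, with $G=\{\overline{\infty}\mid G^{\mathcal{R}}\}$ and $G^{\mathcal{R}}\neq\{\infty\}$, Left's unique option is suicidal, so no Left edge is drawn, matching the $\emptyset$ in $c(G)=\{\emptyset\mid c(G^{\mathcal{R}})\}$; on the Right we cannot have $\overline{\infty}\in G^{\mathcal{R}}$ (else $G$ would be a Right-check), so each Right subtree is a Conway tree and the inductive hypothesis identifies it with the corresponding subtree of $c(G^{\mathcal{R}})$. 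Case~3 is symmetric. In case~4, neither side is reduced to a sole suicide; and since $G$ has no checks as followers, $\infty\notin G^{\mathcal{L}}$ and $\overline{\infty}\notin G^{\mathcal{R}}$, so every option is itself a Conway form to which induction applies, matching $c(G)=\{c(G^{\mathcal{L}})\mid c(G^{\mathcal{R}})\}$.

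The main bookkeeping obstacle will be in case~4, where in principle a suicidal option could still sit inside $G^{\mathcal{L}}$ alongside non-suicidal options (e.g., $\overline{\infty}\in G^{\mathcal{L}}$ with other Left options present). The tree-drawing convention would then record an edge to a terminal $\overline{\infty}$-leaf that has no counterpart in $\Npc$, and the recursion would furthermore demand $c(\overline{\infty})$, which is undefined. I would resolve this either by observing that Definition~\ref{def:conmap} implicitly excludes such configurations (so that the Conway map is genuinely applied only when $\overline{\infty}$ is sole, as in clause~2, and symmetrically for $\infty$), or by establishing a preliminary normalization lemma stating that any Conway form is isomorphic to one in which every suicidal option is a sole option. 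Once this is settled, the inductive match between the trees of $G$ and $c(G)$ is immediate and both assertions of the observation follow.
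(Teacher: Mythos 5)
Your proof is correct; note that the paper offers no proof at all for this statement --- it is recorded as an unproved observation immediately after Theorem~\ref{thm:onetoone} --- so there is no ``official'' argument to compare against, and your structural induction matching the four cases of Definition~\ref{def:conmap} against the two tree-drawing conventions is exactly the justification the authors leave implicit. Your first clause is airtight: a small arrow at a node means the corresponding follower has $\infty$ as a Left option or $\overline{\infty}$ as a Right option, i.e.\ is a check, and Conway forms have no checks as followers.

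The ``bookkeeping obstacle'' you isolate in case~4 is a genuine wrinkle, and it is in the paper's definitions rather than in your argument: Definition~\ref{def:conw} does not forbid a Conway form such as $\{\overline{\infty},0\mid 0\}$ (a suicidal Left option alongside others is not a check, since only $\infty\in G^{\mathcal{L}}$ or $\overline{\infty}\in G^{\mathcal{R}}$ makes a check), yet Definition~\ref{def:conmap} then calls for $c(\overline{\infty})$, which is undefined. Your two proposed repairs are both viable; the only slip is the word ``isomorphic'' in the normalization lemma --- deleting a non-sole suicidal option changes the form, so the lemma should assert equality (equivalence) of games, not isomorphism of forms, after which the tree comparison goes through for the normalized representative. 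With that wording fixed, the argument is complete, and it in fact documents more carefully than the paper does why the observation holds.
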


The following theorem constitutes a fundamental result, directly related to Theorem~\ref{thm:invcomparison}. Since Conway games are invertible, with their inverses being their conjugates, the comparison involving these elements can be done by playing, in the traditional way. From now on, for ease, we write $G-H$ instead of $G+(-H)$, where $-H$ is the conjugate of $H$.

One can also establish a third parallel with what happens on the extended real number line. In that structure, infinities are the only non-invertible elements. In $\Np$, non-invertible elements are those where there is some pressure exerted by an infinity in one of their followers (this also explains the choice of the term ``pressing game'').

\begin{theorem}\label{thm:invertible}
Let $G\in \Np$. We have that $G$ is invertible if and only if $G$ is a Conway game. Moreover, if $G$ is a Conway game, then $G-G=0$.
\end{theorem}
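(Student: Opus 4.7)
My plan is to prove both directions by induction on formal birthday. The forward direction (and the moreover clause) uses a Tweedledum-Tweedledee mirror strategy. Assume $G$ is a Conway game, so $G = G'$ for some Conway form $G'$. Conjugation respects game equivalence (since $o(-G + X) = \mathrm{swap}(o(G + (-X)))$ for $X \in \Np \setminus \{\infty,\overline{\infty}\}$), hence $-G = -G'$, and it suffices to prove $G' - G' = 0$ for Conway forms by induction on $\tilde{b}(G')$. The base case $G' = 0$ is immediate from Theorem~\ref{thm:identity}. In the inductive step, Left plays second in $G' - G'$ via the mirror. When Right moves to ${G'}^R + (-G')$: if ${G'}^R = \infty$ (a suicidal option permissible in Conway forms), the sum equals $\infty$---an immediate Left win, valid since $-G'$ is Conway and hence not $\overline{\infty}$; otherwise ${G'}^R$ is itself a Conway form (every follower of a Conway form is a Conway form), and by the IH ${G'}^R - {G'}^R = 0$, so Left mirrors with $-{G'}^R \in (-G')^{\mathcal{L}}$, landing in $0 \in \mathcal{P}$. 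Right's moves in the $-G'$ component are symmetric. Thus $G' - G' \geq 0$ by Theorem~\ref{thm:ftnp}, and the conjugate argument (using $-(G'-G') = G'-G'$) gives $G' - G' \leq 0$, so $G' - G' = 0$ and $G$ is invertible with inverse $-G$.

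\textbf{Reverse direction.} Suppose $G + H = 0$ for some $H \in \Np$. By the absorbing nature of infinities, $H \notin \{\infty, \overline{\infty}\}$. I first show $G$ is not a check: if $\infty \in G^{\mathcal{L}}$, Left's opening $\infty + H = \infty$ (valid since $H \neq \overline{\infty}$) wins $G + H$, contradicting $G + H \in \mathcal{P}$; the Right-check case is symmetric. The remaining task is to show every $G^L$ and $G^R$ is also a Conway game, for then $G$---having no check at the top and only Conway followers---is itself a Conway form.

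\textbf{Hard step.} My plan is to extract, from Left's winning second-player strategy in $G + H$, an inverse for each $G^R$ (dually each $G^L$): when Right plays $G^R + H$, Left's response is either $G^{RL} + H$ or $G^R + H^L$, and the resulting position must be $\geq 0$ for Left to continue winning. To upgrade this to equality with $0$, I combine $G + H = 0$ with its conjugate $(-G) + (-H) = 0$; by associativity and commutativity (Theorem~\ref{thm:almostmonoid}), these rearrange to $(G + (-G)) + (H + (-H)) = 0$, and Theorem~\ref{thm:strict} forbids either summand from being strictly positive. A case analysis using the $\infty$-maintenance property of Theorem~\ref{thm:constructive}---needed because checks among $H$'s followers can force response sequences that disrupt the classical mirror---identifies an $H^L$ with $G^R + H^L = 0$. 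The IH then makes $G^R$ a Conway game. The delicate and hardest part is precisely this matching of Left's responses to genuine inverses rather than mere ``$\geq 0$'' positions, which is where the proof in $\Np$ diverges substantially from the classical argument in $\Npc$.
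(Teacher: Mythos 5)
Your forward direction is essentially the paper's own argument: induction on the Conway form, the Tweedledum--Tweedledee response landing in ${G'}^R-{G'}^R=0$, Corollary~\ref{cor:ftnp2}, the observation that a Conway form is not a check (so the mirror is never pre-empted by an infinity), and transport along equivalence for Conway games that are not Conway forms. That half is fine.

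The reverse direction has a genuine gap, in two places. First, your ``remaining task'' --- showing that every $G^L$ and $G^R$ of an invertible $G$ is a Conway game --- is false as stated: $G=\{0,\cgminy{\infty}\mid\infty\}$ equals $1$ by domination and is therefore invertible, yet its option $\cgminy{\infty}$ is a non-invertible idempotent, hence not a Conway game. You must first pass to a reduced form of $G$ (and of $H$), which is exactly what the paper does; the absence of dominated and reversible options is then used at every step. (Your final inference ``only Conway followers and no check at the top, hence a Conway form'' also silently uses a replace-options-by-equivalent-games lemma that the paper never states.) Second, and more seriously, your mechanism for upgrading $G^R+H^L\geqslant 0$ to $G^R+H^L=0$ does not work: a game of the form $K+(-K)$ is symmetric and so can never be strictly positive, which makes the appeal to Theorem~\ref{thm:strict} vacuous, and from $(G+(-G))+(H+(-H))=0$ one cannot conclude that either summand is $0$, because two games confused with $0$ can sum to $0$ (already $*+*=0$ with $*\in\mathcal{N}$). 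This unproved matching step is precisely the crux, and the paper circumvents it entirely by arguing by contradiction: it takes a reduced \emph{pressing} $G$ with inverse $H$ of minimal total formal birthday, uses reversibility to rule out responses $G^{L_1R}+H\leqslant 0$ and minimality to force $G^{L_1}+H^{R_1}<0$ strictly, and then runs a ``carousel'' producing an unending sequence of pairwise distinct options $G^{L_1},H^{R_1},G^{L_2},H^{R_2},\ldots$, impossible for short games. As written, your reverse direction is a plan rather than a proof; you would need either to reproduce the carousel argument or to supply a genuinely new proof of the key identification $G^R+H^L=0$.
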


\begin{proof}
($\Leftarrow:$) Suppose first that $G$ is a Conway form. If $G=0$, then $-G=0$ and $G-G=0$. Otherwise, according to Corollary~\ref{cor:ftnp2}, it is sufficient to verify that $G-G$ is a $\mathcal{P}$-position. If Left, playing first, chooses $G^L-G$, because this game is not $\infty$ ($G$ is not a check), Right can answer with $G^L-G^L$ and, by induction, because $G^L$ is a Conway form with no checks as followers, that option is equal to zero. Because of that, by Corollary~\ref{cor:ftnp2}, that option is a $\mathcal{P}$-position, and Right wins. Analogous arguments work for the other options of the first player, and so, $G-G$ is a $\mathcal{P}$-position. Again, by Corollary~\ref{cor:ftnp2}, $G-G=0$.

Suppose now that $G$ is a Conway game, but not a Conway form. Because it is a Conway game, by definition, it is equal to some $G'\in\Npc^{\text{C}}$. The first paragraph proved that $G'-G'=0$. Also, by symmetry, $-G$ is equal to $-G'$. Therefore, $G'-G'=0$ implies $G-G=0$.\\

\noindent
($\Rightarrow:$) Let $G\in \Np$ be a reduced form of a pressing game. Suppose that there is $H\in \Np$ such that $G+H=0$, i.e., $G+H\in\mathcal{P}$. Assume that $H$ is also a reduced form and that $\tilde{b}(G)+\tilde{b}(H)$ is the smallest possible for $G$ and $H$ satisfying these conditions.

If $G$ has no options, $G$ is either $\infty$ or $\overline{\infty}$, and $G+H\not\in\mathcal{P}$, which is a contradiction. If all options of $G$ are Conway games, then $G$ itself must be a Conway game, leading to a contradiction as well. If $G=\{\overline{\infty}\,|\,\infty\}$, then $G=0$, leading to another contradiction. So, without loss of generality, let $G^{L_1}+H$ be a move such that $G^{L_1}\neq \overline{\infty}$ is a pressing game.

Against $G^{L_1}+H$, Right must have a winning response less than or equal to zero. Suppose that Right's winning answer is some $G^{L_1R}+H\leqslant 0$. If so, by adding $G$ to both sides, $G^{L_1R}\leqslant G$ is obtained. This contradicts the assumption (reduced forms), as if $G^{L_1R}$ is $\overline{\infty}$, then $G^{L_1}$ is absorbing reversible, and if $G^{L_1R}$ is not $\overline{\infty}$, then $G^{L_1}$ is open reversible.

Suppose now Right's winning response is some $G^{L_1}+H^{R_1}\leqslant 0$. If $G^{L_1}+H^{R_1}=0$, then it contradicts the assumption once more (smallest possible $\tilde{b}(G)+\tilde{b}(H)$), since $G^{L_1}$ is a pressing game. Therefore, we have $G^{L_1}+H^{R_1}< 0$.

In order to initiate a ``carousel argument'', often used in CGT,
consider the first move by Right in $G+H$ to $G+H^{R_1}$. By similar reasons, Left must have an option $G^{L_2}+H^{R_1}\geqslant 0$. If $H^{R_1}$ is a Conway game, it is invertible, and we have $G^{L_1}< -H^{R_1}\leqslant G^{L_2}$. In this case, $G^{L_1}$ is a dominated option, contradicting the assumption (reduced forms). On the other hand, due to the fact that $H^{R_1}$ is not a Conway game, we cannot have $G^{L_2}+H^{R_1}=0$ as it once again contradicts the assumption (smallest possible $\tilde{b}(G)+\tilde{b}(H)$). Thus, we have $G^{L_2}+H^{R_1}> 0$. Turning the carousel, a first move by Left in $G+H$ to $G^{L_2}+H$ must be answered with some $G^{L_2}+H^{R_2}< 0$. And turning again, a first move by Right in $G+H$ to $G+H^{R_2}$ must be answered with some $G^{L_3}+H^{R_2}> 0$. Note that, in this case, it is not possible to have $G^{L_1}+H^{R_2}> 0$, since we would have
\begin{itemize}
  \item $G^{L_1}+H^{R_1}<0$;
  \item $-G^{L_2}-H^{R_1}<0$ (since $G^{L_2}+H^{R_1}>0$);
  \item $G^{L_2}+H^{R_2}<0$;
  \item $-G^{L_1}-H^{R_2}<0$ (since $G^{L_1}+H^{R_2}>0$).
\end{itemize}
Combining these inequalities by making use of Theorem  \ref{thm:strict} would lead to the inequality $(G^{L_1}+H^{R_1}+G^{L_2}+H^{R_2})+(-G^{L_1}-H^{R_1}-G^{L_2}-H^{R_2})< 0$. That cannot occur, given that the specified game, due to its symmetry, can belong to $\mathcal{P}$ or $\mathcal{N}$, but never to $\mathcal{R}$. Entirely analogous arguments are sufficient to show that the winning responses must always be obtained with options different from the previous ones. Since the carousel cannot rotate indefinitely, it is not possible to have $G$ and $H$ such that $G+H\in\mathcal{P}$, i.e., such that $G+H=0$.
\end{proof}

We conclude this section by proving the Conway-embedding.

\begin{theorem}[Conway-Embedding]\label{thm:embedding}
For any $G,H\in \Nc$, if $G\geqslant H$ then $c(G)\geqslant c(H)$, where the first is the order relation of $\Np$ and the second is the order relation of $\Npc$. Similarly, for any $G,H\in \Npc$, if $G\geqslant H$ then $c^{-1}(G)\geqslant c^{-1}(H)$,  where the first is the order relation of $\Npc$ and the second is the order relation of $\Np$.
\end{theorem}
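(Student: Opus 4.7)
The plan is to prove both implications through a single induction on $\tilde{b}(G) + \tilde{b}(H)$, using the maintenance property as the common bridge between the two structures. The classical equivalence $G \geqslant H \Leftrightarrow \mathbf{M}(G, H)$ holds in $\Npc$. Inside $\Np$, the same equivalence holds once we restrict to Conway forms: for a Conway form no follower is a Left-check, so Definition \ref{def:strategy} forces $\mathfrak{R}(G)$ to consist of the single trivial strategy $\{G\}$, and analogously $\mathfrak{L}(H) = \{\{H\}\}$. With only trivial strategies of responses available, Definition \ref{def:maintenanceinf} degenerates to the classical $\mathbf{M}(G, H)$, and Theorems \ref{thm:traditional} and \ref{thm:constructive} then yield $G \geqslant H \Leftrightarrow \mathbf{M}(G, H)$ in $\Np$ as well.

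Next I would establish a correspondence between the options of $G$ and those of $c(G)$, read off from the four cases of Definition \ref{def:conmap}. For a Conway form $G$, the Left options of $c(G)$ are precisely the images $c(G^L)$ of the non-suicidal Left options $G^L \neq \overline{\infty}$ of $G$; when $G^{\mathcal L} = \{\overline{\infty}\}$, the image is the empty set. Theorem \ref{thm:inflarge} shows that $\overline{\infty}$ is the least element of $\Np$, so a suicidal Left option $G^L = \overline{\infty}$ can witness the clause $G^L \geqslant H^L$ of $\mathbf{M}(G, H)$ only when $H^L = \overline{\infty}$, a degenerate case in which the clause is automatic on both sides. Symmetric remarks with $\infty$ in place of $\overline{\infty}$ handle Right options. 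Hence the correspondence is faithful with respect to the existential quantifiers appearing in $\mathbf{M}$.

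Given these preparations, the inductive step is routine. Assuming $G \geqslant H$ in $\Np$ for Conway forms $G, H$, the property $\mathbf{M}(G, H)$ tells us that for each $H^L$ either some $H^{LR}$ satisfies $G \geqslant H^{LR}$ or some $G^L$ (which may be taken non-suicidal) satisfies $G^L \geqslant H^L$. The induction hypothesis, applied at strictly smaller $\tilde{b}(G) + \tilde{b}(H)$, transports these inequalities to $c(G) \geqslant c(H^{LR})$ or $c(G^L) \geqslant c(H^L)$ in $\Npc$, and by the correspondence, $c(H^{LR})$ is a Right option of $c(H^L)$ and $c(G^L)$ is a Left option of $c(G)$. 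A symmetric argument for each $G^R$ completes $\mathbf{M}(c(G), c(H))$ in $\Npc$, and the classical characterization yields $c(G) \geqslant c(H)$. The second half of the theorem, where $G, H \in \Npc$ and one applies the inverse map constructed in the proof of Theorem \ref{thm:onetoone}, follows by the same argument run in the opposite direction, since $c^{-1}$ merely reintroduces a suicidal option where a classical game had no option at all, and such options are handled exactly as above.

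The principal obstacle will be careful case analysis at the degenerate forms $\{\overline{\infty} \mid G^{\mathcal R}\}$, $\{G^{\mathcal L} \mid \infty\}$, and $0 = \{\overline{\infty} \mid \infty\}$ that Definition \ref{def:conmap} treats separately. In these cases the existential quantifier in $\mathbf{M}$ ranges over a singleton consisting of a suicidal option in $\Np$, while in $\Npc$ it ranges over the empty set; one must verify in each combination that both reduce to the same vacuously or trivially satisfied clause. This is tedious bookkeeping but introduces no new mathematical content.
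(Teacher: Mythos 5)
Your proposal is correct, but it takes a genuinely different route from the paper's. The paper does not go through the maintenance property at all: it first invokes Theorem~\ref{thm:invertible} to conclude that the Conway form $H$ is invertible, then uses Theorem~\ref{thm:invcomparison} to replace $G\geqslant H$ by ``Left wins $G-H$ playing second'' inside $\Np$, and transfers that second-player strategy move-by-move (by induction on the options of $G-H$) to the classical game $c(G)+c(-H)$, finishing with Lemma~\ref{lem:conjcon} to identify $c(-H)$ with $-c(H)$. You instead observe that for Conway forms every strategy of responses in Definition~\ref{def:strategy} is the trivial singleton, so $\mathbf{M}^\infty$ collapses to the classical $\mathbf{M}$, and then you transport $\mathbf{M}$ across the Conway map via the option correspondence, with the suicidal options $\overline{\infty}$ and $\infty$ accounting for the empty option sets on the classical side. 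Both arguments are sound and both lean on one of the paper's two big prior results: the paper's proof needs the invertibility theorem (proved immediately before it, and itself nontrivial), while yours needs the options-only comparison machinery of Section~\ref{sec:optionsonly}; a pleasant feature of your route is that it makes explicit \emph{why} the embedding works --- Conway forms admit no forcing sequences, so the affine comparison criterion literally degenerates to the classical one --- and it proves both halves of the theorem by one symmetric mechanism rather than treating the $\Npc\to\Np$ direction as a separate analogous induction. One small point worth making explicit in a write-up: the witnesses produced by $\mathbf{M}(G,H)$ (such as the $H^{LR}$ with $G\geqslant H^{LR}$ or the $G^{RL}$ with $G^{RL}\geqslant H$) are automatically non-suicidal and hence again Conway forms, since $\overline{\infty}\geqslant K$ or $K\geqslant\infty$ would force $K$ to equal an infinity, which no Conway form does; this is what licenses the application of the induction hypothesis to those pairs.
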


\begin{proof}
Let $G$ and $H$ be two Conway forms such that $G\geqslant H$. According to Theorem~\ref{thm:invertible} $H$ is invertible and, by Theorem~\ref{thm:invcomparison}, we have $G-H\in\mathcal{L}\cup\mathcal{P}$. Let us verify that, in $\Npc$, we also have $c(G)+c(-H)\in\mathcal{L}\cup\mathcal{P}$. With that verified, Lemma \ref{lem:conjcon} ensures that $c(G)+c(-H)\geqslant 0\iff c(G)-c(H)\geqslant 0\iff c(G)\geqslant c(H)$. Suppose that Right moves to some $c(G^R)+c(-H)$. Considering the game $G^R-H\in\Np$, Left must have a winning move, which can be either some $G^{RL}-H \geqslant 0$ or some $G^R-H^R \geqslant 0$.  Concerning $\Npc$, by induction, in the first case, we have $c(G^{RL}) + c(-H) \geqslant 0$, and in the second case, $c(G^R) + c(-H^R) \geqslant 0$. In other words, Left has a winning move in $c(G^R)+c(-H)$. If Right moves to some $c(G)+c(-H^L)$, the inductive argument works exactly the same way, and Left has a winning response.  We can conclude that $c(G)\geqslant c(H)$.

The proof for the second implication in the theorem can be carried out by induction in a completely analogous manner.
\end{proof}

\section{Classification of affine games}
\label{sec:classification}

It is well-known that the games of the classical structure $\Npc$ can be classified as cold, tepid, or hot. Moreover, this classification is based on the concept of number and the related Number Avoidance Theorem.

When a game is a number (cold), both players have a negative incentive, meaning they are in \emph{zugzwang}\footnote{Zugzwang (from German ``compulsion to move'') refers to a situation where a player is placed at a disadvantage because they must make a move.} in the sense that making a move leads to a ``loss'' in terms of guaranteed moves (if there are any moves available at all).  For example, $\frac{1}{2}=\{0\,|\,1\}$; if Left plays, she reduces $\frac{1}{2}$ to $0$, and if Right plays, he increases $\frac{1}{2}$ to $1$. In other words, both players prefer the opponent to play rather than playing themselves. When a game is hot, the opposite occurs. There is urgency in the sense that making a move leads to a ``gain'' of guaranteed moves. An example of a hot game is $\pm1=\{1\,|\,-1\}$. When a game is tepid, a move brings neither gains nor losses in terms of guaranteed moves. Playing may be important for the sake of making the last move \emph{per se}, but not in terms of gaining guaranteed moves by having the right to move. An example of a tepid game is $\{1\,|\,1\}$; Left already got one guaranteed move regardless of whether she moves or not. Thus, intuitively, the Number Avoidance Theorem establishes that a player should not play on numbers in order to avoid losing what is already guaranteed; playing in hot games or tepid games avoids such a loss.

In this section, we extend these concepts, considering that in $\Np$, infinities come into play. For this purpose, it is important to start with the Number Avoidance Theorem, which still holds in affine normal play.

\begin{theorem}[Number Avoidance Theorem]\label{thm:avoidance}
Let $G,x\in \Np$, and suppose $x$ is a number and $G$ is not. If Left (resp. Right) has a winning move on $G+x$, then she has a winning move of the form $G^L+x$ (resp. $G^R+x$).
\end{theorem}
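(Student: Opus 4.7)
The plan is to proceed by induction on the combined formal birthday of $G$ and $x$, handling only the Left statement (the Right statement is symmetric, via conjugation). Assume Left has a winning first move in $G+x$; by Definition~\ref{def:individualized} and the Fundamental Theorem (Theorem~\ref{thm:ftnp}), this means there is some option $(G+x)^L \geq 0$. If $(G+x)^L$ already has the form $G^L + x$, we are done.

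Otherwise $(G+x)^L = G + x^L$ for some Left option $x^L$ of $x$, so $G + x^L \geq 0$. Since $x$ is a number it is a Conway game, hence invertible by Theorem~\ref{thm:invertible}; moreover, the Conway-embedding (Theorem~\ref{thm:embedding}), combined with the classical fact that a number satisfies $x^L < x$, transfers to $\Np$ to give $x - x^L > 0$. Invoking Theorem~\ref{thm:strict} on $(G + x^L) + (x - x^L) = G + x$ yields $G + x > 0$; in particular $G + x \in \mathcal{L}$, so Right, moving first from $G + x$, loses every move.

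From here I would extract a winning Left move of the form $G^L + x$ by showing that the ``extra'' Left options $G + x^L$ in $G + x$ are reducible. The strategy is to prove each such option is open-reversible (Theorem~\ref{thm:oreversibility}) through an appropriate Right option of $G + x^L$, exploiting the strict inequality $x > x^L$ together with the hypothesis that $G$ is not a number, so $G$ does not obey the simpler-number ordering $G^L < G^R$ for all pairs. After removing all such reversible extras, any surviving winning Left move must be of the form $G^L + x$. As a sanity check, once the reversibility is established, one also recovers the stronger Number Translation identity $G + x = \{G^L + x \mid G^R + x\}$, from which the theorem is immediate.

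The main obstacle will be verifying this reversibility cleanly in the affine setting, where $G$ may be a pressing game with check followers. Since $x$ contributes no checks (it is a Conway game), Left-check options of $G + x$ descend from checks of $G$ and are automatically of the desired form $G^L + x$, so they pose no issue. The delicate part is the non-check Left options $G + x^L$: the correct reverser is typically $G^R + x^L$ for a suitable Right option of $G$, and the analysis requires a small case split when $G$'s options are themselves numbers, at which point the classical Simplicity Theorem (pulled back via Conway-embedding) is invoked to rule out the configuration in which $G$ would be forced to equal $-x$ — a number — contradicting the hypothesis on $G$.
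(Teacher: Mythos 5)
Your first paragraph is sound and matches the opening of the paper's argument: the only case needing work is a winning move of the form $G+x^{L}$ with $G+x^{L}\geqslant 0$, and your derivation of $x-x^{L}>0$ via invertibility and the Conway-embedding is fine. But from there you only conclude $G+x>0$, which tells you that Left wins $G+x$ without exhibiting a winning move of the required shape, and the rest of the proposal is a plan rather than a proof. The proposed route --- show each extra option $G+x^{L}$ is open-reversible, deduce the Number Translation identity $G+x=\{G^{\mathcal{L}}+x\mid G^{\mathcal{R}}+x\}$, and read off the theorem --- is never carried out, and it inverts the usual logical order: Number Translation is normally \emph{derived from} Number Avoidance, so you would need an independent verification of the reversibility claims, which you do not supply. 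Even granting that $G+x^{L}$ reverses through some $G^{R}+x^{L}$, Theorem~\ref{thm:oreversibility} replaces that option by the Left options of the reverser, which have the form $G^{RL}+x^{L}$ or $G^{R}+x^{LL}$, not $G^{L}+x$; a further argument is still needed to land on an option of the stated form. The auxiliary claim that ``$G$ is not a number, so $G$ does not obey $G^{L}<G^{R}$ for all pairs'' is also form-dependent and does not feed into any concrete step.

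The missing idea is the short induction the paper uses. Since $G$ is not (equal to) a number and $x^{L}$ is invertible, $G\neq -x^{L}$, so $G+x^{L}\geqslant 0$ is in fact strict: $G+x^{L}>0$. By Corollary~\ref{cor:ftnp2}, Left then has a winning \emph{first} move in $G+x^{L}$, and by induction (on the simpler number $x^{L}$; $G$ is still not a number) this move may be taken of the form $G^{L}+x^{L}\geqslant 0$. Now $G^{L}+x=(G^{L}+x^{L})+(x-x^{L})>0$ by Theorem~\ref{thm:strict}, so $G^{L}+x$ is a winning move for Left in $G+x$, of the required form. This sidesteps reversibility and Number Translation entirely; your observation that check options of $G+x$ necessarily come from $G$ is correct but is not where the difficulty lies.
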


\begin{proof}
We may assume that $x$ is in the classical reduced form via Conway embedding, since the outcomes of the considered sums do not depend on its form. If $G=\infty$ or $G=\overline{\infty}$ there are no moves and there is nothing to prove. Now suppose that $G+x^L$ is a winning move for Left. According to Theorem~\ref{thm:ftnp}, we have $G+x^L\geqslant 0$. But $G$ is not equal to a number, thus $G\neq -x^L$ ($x^L$ is invertible), so in fact $G+x^L> 0$ is mandatory. Again by Theorem \ref{thm:ftnp}, Left has a winning move in $G+x^L$. By induction, we may assume that it is some $G^L+x^L\geqslant 0$. Since $x$ is in the classical reduced form, we have $x^L<x$ and $G^L+x> G^L+x^L\geqslant 0$. Hence, $G^L+x$ is a winning move for Left in $G+x$. The proof from the Right's perspective is analogous.
\end{proof}

The concept of \emph{stop} in classical theory is based on the assumption that two players move in a component, and, considering the Number Avoidance Theorem, stop playing on that component as soon as a number has been reached. Left attempts to have this stopping number be as large as possible, and Right wants it to be as small as possible. Our next step consists of defining this concept for $\Np$, in order to classify affine games similarly to that done in classical theory, with just a few additions due to infinities. An obvious difference when comparing with the classical structure is that the stopping point may not be a number, but rather $\infty$ or $\overline{\infty}$. This is because there is the possibility for one of the players to force a terminating move.

\begin{definition}[Stops]\label{def:stops}
Left and Right \emph{stops} of $G$ are recursively defined by
\[LS(G)=  \begin{cases}
\infty \quad\quad\quad\quad\quad\quad\quad\quad\quad\quad\,\,\,\,\textrm{ if }G=\infty\\
\overline{\infty} \quad\quad\quad\quad\quad\quad\quad\quad\quad\quad\,\,\,\,\textrm{ if }G=\overline{\infty}\\
x \quad\quad\quad\quad\quad\quad\quad\quad\quad\quad\quad\,\textrm{ if }G=x\textrm{ is a number}\\
\max \{RS(G^L)\,|\,G^L\in G^{\mathcal{L}}\} \quad\textrm{ otherwise}
\end{cases}
\]
\[RS(G)=  \begin{cases}
\infty \quad\quad\quad\quad\quad\quad\quad\quad\quad\quad\,\,\,\,\textrm{ if }G=\infty\\
\overline{\infty} \quad\quad\quad\quad\quad\quad\quad\quad\quad\quad\,\,\,\,\textrm{ if }G=\overline{\infty}\\
x \quad\quad\quad\quad\quad\quad\quad\quad\quad\quad\quad\,\textrm{ if }G=x\textrm{ is a number}\\
\min \{LS(G^R)\,|\,G^R\in G^{\mathcal{R}}\} \quad\textrm{ otherwise.}
\end{cases}
\]
\end{definition}

\begin{definition}\label{def:classification}
If $G$ is an element of $\Np$, then
\begin{itemize}
  \item $G$ is \emph{cold} if it is a number;
  \item $G$ is \emph{tepid} if the stops are finite and $LS(G)=RS(G)$, but $G$ is not a number;
  \item $G$ is \emph{hot} if $LS(G)>RS(G)$;
  \item $G$ is \emph{bubbling} if both stops are $\infty$ or both stops are $\overline{\infty}$.
\end{itemize}
\end{definition}

\begin{example}\label{ex:classification}
The following list has an example of each of the four types of affine games.
\begin{itemize}
  \item $G=\{\{\infty\,|\,-1*\}\,|\,0\}$ is a number since $\{\infty\,|\,-1*\}$ is reversible through $-1*$, and $G$ reduces to $-\frac{1}{2}=\{-1\,|\,0\}$.
  \item $G=\{\{\infty\,|\,0\}\,|\,0\}$ is tepid since both stops are finite and $LS(G)=RS(G)=0$. It is a reduced form of a pressing game, as it is not equivalent to any Conway form.
  \item $G=\{\{\infty\,|\,1\}\,|\,0\}$ is hot since $LS(G)=1$, $RS(G)=0$, and $LS(G)>RS(G)$. The game $G$ is a pressing game, as it is not equivalent to any Conway form.
  \item $G=\{\infty\,|\,\infty\}$ is bubbling since $LS(G)=RS(G)=\infty$. It is inevitable that $G$ ends with Left delivering a checkmate, regardless of whether she plays first or not. In this sense, $G$ has already reached an irreversible boiling state, which explains the choice of the word ``bubbling''.
\end{itemize}
\end{example}

The class of \emph{mate games} is a subclass of bubbling games. Its formalization is needed given the frequency with which these situations occur. In an $n$-Leftmate ($n$-Rightmate), Left (Right) is guaranteed the possibility of checkmating in $n$ moves, without the opponent being able to defend. However, the checkmate cannot occur more quickly.

\begin{definition}[Mate Games]\label{def:mategame}
Let $n$ be a nonnegative integer. An $n$-\emph{Leftmate} $\infty_n$ is recursively defined by
\[\infty_n=  \begin{cases}
\infty, \quad\quad\quad\quad\quad\textrm{ if }n=0;\\
\{\infty_{n-1}\,|\,\infty\}, \quad\textrm{ if }n>0.
\end{cases}
\]
The definition of $n$-\emph{Rightmate}, denoted by $\overline{\infty}_n$, is symmetric.
\end{definition}

\begin{observation}
The game $\{\infty_3\,|\,\infty_8\}$ is a bubbling game, but it is not a $n$-Leftmate. The reason for this is that the inevitable checkmate can occur at different speeds, depending on whether it is Left or Right who starts. If Right starts, he can delay his fateful destiny in the component a little, and this can be crucial in certain disjunctive sums.
\end{observation}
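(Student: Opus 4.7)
The statement has two parts: that $G := \{\infty_3 \,|\, \infty_8\}$ is bubbling, and that no $n \geq 0$ satisfies $G = \infty_n$. I would verify the first part by a direct unfolding of Definition \ref{def:stops}, and the second by exhibiting, for each $n$, a distinguishing summand $X \in \Np \setminus \{\infty, \overline{\infty}\}$ with $o(G + X) \neq o(\infty_n + X)$.

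For the bubbling claim, $G$ is neither an infinity nor a number, so $LS(G) = RS(\infty_3)$ and $RS(G) = LS(\infty_8)$. Walking the mate ladder $\infty_k = \{\infty_{k-1} \,|\, \infty\}$ by induction on $k \geq 1$ gives $RS(\infty_k) = LS(\infty) = \infty$ (the only Right option of $\infty_k$ is $\infty$) and consequently $LS(\infty_k) = RS(\infty_{k-1}) = \infty$. Hence $LS(G) = RS(G) = \infty$, and Definition \ref{def:classification} classifies $G$ as bubbling.

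For the second part, the distinguishing family will be $X = \overline{\infty}_m$ for $m \geq 1$. A race induction --- at each step discarding the unique suicidal option of each player (Left's $\overline{\infty}$-option in $\overline{\infty}_k$ and Right's $\infty$-option in $\infty_k$) --- should yield, for $n, m \geq 1$,
\[
\infty_n + \overline{\infty}_m \in \begin{cases} \L & \text{if } n < m,\\ \N & \text{if } n = m,\\ \R & \text{if } n > m, \end{cases}
\]
while the same analysis applied to $G + \overline{\infty}_m$, accounting for Left's only useful opener $\infty_3 + \overline{\infty}_m$ and Right's two useful openers $\infty_8 + \overline{\infty}_m$ and $G + \overline{\infty}_{m-1}$, gives $\R$ for $1 \leq m \leq 3$, $\N$ for $4 \leq m \leq 7$, and $\L$ for $m \geq 8$.

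The conclusion is then immediate: for $n = 0$ take $m = 1$, giving $\infty + \overline{\infty}_1 = \infty \in \L$ against $G + \overline{\infty}_1 \in \R$; for $n \geq 1$, the outcome $\N$ for $\infty_n + \overline{\infty}_m$ occurs only at the single value $m = n$, whereas $G + \overline{\infty}_m \in \N$ for all four values $m \in \{4,5,6,7\}$, so any $m \in \{4,5,6,7\} \setminus \{n\}$ distinguishes. I expect the main obstacle to lie in presenting the race induction cleanly: the base cases $m = 1$ (Right's move in $\overline{\infty}_1$ lands directly on $\overline{\infty}$, ending the game) and $n = 1$ (dually) need separate handling, and one must repeatedly justify that each player's suicidal options can be ignored without affecting the outcome. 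The structural content behind the argument is the gap $8 - 3 > 1$: it widens the $\N$-window of $G$ beyond the single rung any pure mate $\infty_n$ can occupy.
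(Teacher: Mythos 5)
Your proof is correct. The stop computation for the bubbling claim is a straightforward unfolding of Definitions \ref{def:stops} and \ref{def:classification}, and your race analysis checks out: $o^L(\infty_n+\overline{\infty}_m)=\mathbf{L}$ iff $n\leqslant m$ and $o^R(\infty_n+\overline{\infty}_m)=\mathbf{R}$ iff $m\leqslant n$ by the induction you describe, which gives your three-way table, and the corresponding computation for $G+\overline{\infty}_m$ (outcome $\mathscr{R}$ for $m\leqslant 3$, $\mathscr{N}$ for $4\leqslant m\leqslant 7$, $\mathscr{L}$ for $m\geqslant 8$) is also right, so any $m\in\{4,5,6,7\}\setminus\{n\}$ distinguishes $G$ from $\infty_n$. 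The paper states this observation without proof, offering only the informal ``Right can delay his fateful destiny'' remark; your argument is exactly that intuition made rigorous via the distinguishing summands $\overline{\infty}_m$, and it establishes the stronger reading of the claim --- that $G$ is not \emph{equivalent} to any $\infty_n$, not merely that its form differs.
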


\noindent
\textbf{Nomenclature}: When a game is tepid and the stops are equal to zero, it is an \emph{infinitesimal}. When a game is tepid and the stops are not zero, it is a \emph{translation of an infinitesimal}. When a game is hot and at least one of the stops is infinite, it is \emph{scalding}.

\begin{observation}
There are infinitesimals that are not Conway games, such as $\{\{\infty\,|\,0\}\,|\,0\}$  already used in Example \ref{ex:classification} and materialized in Figure \ref{fig9}. On the other hand, there are no scalding games that are Conway games. For example, $\{\infty_3\,|\,0\}$ is a scalding game. In this game, Right is not yet condemned to being checkmated by the opponent as long as he retains the right to make the move. The temperature is infinite, but the boiling state of a bubbling game has not been reached yet, as in a scalding game neither player is definitively condemned to be checkmated.
\end{observation}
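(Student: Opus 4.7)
The observation bundles two claims: (i) $G_1=\{\{\infty\,|\,0\}\,|\,0\}$ is an infinitesimal that is not a Conway game, and (ii) the structural statement that no scalding game is a Conway game. I would dispatch (i) by direct outcome computation and tackle (ii) through a sub-lemma on stops combined with Theorem~\ref{thm:invertible}.

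For (ii), the plan is to prove the sub-lemma: every Conway form has both stops finite (genuine numbers, not $\infty$ or $\overline{\infty}$). The induction on formal birthday starts from $0$, where both stops are $0$. If the Conway form $G$ is equal to a number, then by Definition~\ref{def:conw} it is distinct from $\infty$ and $\overline{\infty}$, and the Conway embedding (Theorem~\ref{thm:embedding}) identifies it with a finite dyadic rational. Otherwise $LS(G)=\max\{RS(G^L):G^L\in\GL\}$; each Left option $G^L$ is either itself a Conway form (handled by induction) or $\overline{\infty}$, since $G^L=\infty$ would make $G$ a Left-check and contradict the Conway-form hypothesis. The delicate case is when $\overline{\infty}$ is the \emph{sole} Left option of a non-number Conway form, because then the recursion naively produces $RS(\overline{\infty})=\overline{\infty}$. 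Here I plan to pull the situation back through the Conway embedding: $c(G)=\{\emptyset\,|\,c(\GR)\}$ is a classical form with no Left options, and the Simplicity Theorem of classical CGT declares every such form equal to a surreal number. By the equivalence-preserving direction of Theorem~\ref{thm:embedding}, $G$ is itself equal to a number in $\Np$, contradicting that we are outside the number branch.

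Once the sub-lemma is in hand, I need invariance of stops under game equivalence, which I would argue separately by comparing outcomes of $G+n$ and $H+n$ for integers $n$ via the Number Avoidance Theorem~\ref{thm:avoidance}. Any Conway game then inherits finite stops from any of its Conway-form representatives and hence cannot meet the infinite-stop requirement for scalding in Definition~\ref{def:classification}. For the paper's example, $\{\infty_3\,|\,0\}$ unrolls cleanly: $LS=RS(\infty_3)=LS(\infty)=\infty$ while $RS=LS(0)=0$, so it is hot with an infinite Left stop, confirming it is scalding.

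For the auxiliary claim (i), I would apply Theorem~\ref{thm:invertible} and show $G_1+(-G_1)\in\mathcal{N}$. Left playing first wins by moving to $\{\infty\,|\,0\}+(-G_1)$: if Right replies in $(-G_1)$ to $\{0\,|\,\overline{\infty}\}$ she immediately plays $\infty$, while if Right replies in the first component to $0$ she moves to $0$ in the second, leaving Right's only response the suicidal $\infty$. Right playing first wins by moving to $G_1+\{0\,|\,\overline{\infty}\}$: Left's two Left-options $\{\infty\,|\,0\}+\{0\,|\,\overline{\infty}\}$ and $G_1$ both lose, the first to Right's immediate $\overline{\infty}$ and the second because $G_1$'s unique Right option $0$ forces Left into $\overline{\infty}$ on her next move. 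So $G_1$ is not invertible and hence not a number, and the recursive branch of Definition~\ref{def:stops} gives $LS(G_1)=RS(G_1)=0$, making $G_1$ tepid with zero stops, i.e., an infinitesimal. The principal obstacle throughout is the lone-$\overline{\infty}$ case of the sub-lemma, where a self-contained $\Np$ induction fails and classical CGT must be imported through Theorem~\ref{thm:embedding}; a secondary concern is making the invariance of stops precise in $\Np$, which I would reduce to Theorem~\ref{thm:avoidance} applied to integer shifts of the equality under test.
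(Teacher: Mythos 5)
The paper states this as an Observation and supplies no proof, so there is no ``paper's argument'' to compare yours against; I can only judge the proposal on its own terms. Part (i) is correct and complete: your outcome computation showing $\{\{\infty\,|\,0\}\,|\,0\}+\{0\,|\,\{0\,|\,\overline{\infty}\}\}\in\mathcal{N}$ is accurate, the contrapositive of the ``moreover'' clause of Theorem~\ref{thm:invertible} then rules out Conway-game status (hence number status), and the recursive branch of Definition~\ref{def:stops} gives both stops equal to $0$, so the game is a tepid non-number with zero stops, i.e.\ an infinitesimal. Your sub-lemma that every Conway form has finite stops is also sound, including the lone-$\overline{\infty}$ case: if $\GL=\{\overline{\infty}\}$ then $c(G)$ has empty Left option set, hence equals an integer in $\Npc$, and Theorem~\ref{thm:embedding} applied in both directions (with $c^{-1}c=\mathbf{1}$) makes $G$ a number in $\Np$, so the number branch of Definition~\ref{def:stops} short-circuits the recursion.

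The genuine gap is the step you downgrade to a ``secondary concern'': invariance of stops under game equivalence in $\Np$. Your chain for claim (ii) is ``scalding form $\Rightarrow$ infinite stop $\Rightarrow$ every equivalent form has an infinite stop $\Rightarrow$ no equivalent Conway form'', and the middle implication carries essentially all of the content of the statement being proved; reducing to it is not a reduction. The sketch for it (compare outcomes of $G+n$ and $H+n$ over integers $n$, cite Theorem~\ref{thm:avoidance}) does not engage with the real obstacles: even classically, integer translations determine stops only up to the unit interval, and the stop--outcome lemmas that make translation arguments work (of the type $LS(G)<x\Rightarrow G<x$) would have to be re-established in $\Np$, where the stop recursion passes through $\infty$ and $\overline{\infty}$, where the ``is it a number'' test is a value question at every level, and where the opponent can answer a translation argument with checks rather than with the local replies the classical proof assumes. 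Theorem~\ref{thm:avoidance} only locates a winning move in the non-number component; it does not by itself yield a correspondence between stops and outcomes of number translations. A leaner route avoids invariance entirely: every Conway game is sandwiched between two integers (its classical image is, and Theorem~\ref{thm:embedding} transfers the bounds), so it suffices to show that an infinite stop on a hot game certifies unboundedness, e.g.\ $LS(G)=\infty$ forces $G\not<n$ for every integer $n$. Even that induction has a delicate point --- Right ``passing'' into the number component, after which Left must move in a position about which the hypothesis $RS=\infty$ says nothing directly --- and that is precisely where a carefully stated avoidance/translation lemma must be proved rather than invoked. As written, your proposal establishes (i) but leaves (ii) resting on an unproven lemma at least as hard as the claim itself.
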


Now, let us move on to an important concept regarding affine normal play. If we consider the classical structure, all reductions are based on the fundamental idea of \emph{moves that may never be made}, given their uselessness. A dominated option \emph{never} needs to be used. Regarding a reversible option, since it is only chosen with the intention of continuing to play locally, all moves that are not related to this local continuation are \emph{never} used. For the affine structure $\Np$, something entirely new happens. There are checks that must \emph{always} be used. In other words, as long as the player is not mortally threatened in another component, the first thing they must do is deliver a check in a specific component. In affine normal play, in addition to ``never'', ``always'' also comes into play. Thus, there are forms that, although they cannot be reduced, already contain a certain type of ``intrinsic  reduction'', since they will be the target of a move at the first opportunity. In a way, these components are ephemeral. The following definition formalizes this idea, capturing the concept of \emph{hammerzug}\footnote{Hammerzug (from German ``move with a hammer'') is a move that, when not under lethal threat, a player should always  make, as it can only help without having any negative consequences.}. When a hammerzug is a component of a disjunctive sum, a player must deliver a check in that component as long as they are not threatened.

\begin{definition}[Hammerzug]\label{def:hammerzug} A game $G\in \Np$ is a \emph{Left-hammerzug} if there exists a Left-check $G^L=\{\infty\,|\,G^{L\mathcal{R}}\}$ such that $G^{LR}\geqslant G$ for all $G^{LR} \in G^{L\mathcal{R}}$.  The definition of Right-hammerzug is similar. A game $G\in \Np$ is a \emph{hammerzug} if it is both a Left-hammerzug and a Right-hammerzug.\end{definition}

\begin{example}
The game $G=\{\{\infty\,|\,1\}\,|\,0\}=\{\circlearrowright^{1}\,|\,0\}$ is a Left-hammerzug since $1>G$. The game $G=\{\{\infty\,|\,1\}\,|\,\{-1\,|\,\overline{\infty}\}\}=\{\circlearrowright^{1}\,|\,\circlearrowleft^{-1}\}$ is a hammerzug since $1>G>-1$.
\end{example}

\begin{theorem}[Extreme Urgency]\label{thm:extremeham} If $G\in \Np$ is a Left-hammerzug, then, for every Right-quiet game $X$, $o^L(G+X)=o^R(G^L+X)$ where $G^L$ is the Left-check specified in Definition~\ref{def:hammerzug}. There is a dual result in terms of Right-hammerzugs.
\end{theorem}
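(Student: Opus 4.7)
The plan is to establish the dichotomy $o^L(G+X) = \mathbf{L}$ if and only if $o^R(G^L+X) = \mathbf{L}$, which, since both individualized outcomes take values in $\{\mathbf{L},\mathbf{R}\}$, is equivalent to the claimed identity. The guiding intuition is that the hammerzug check $G^L$ is Left's ``always-play'' option from $G+X$: it is a check, so Right must respond locally, and whatever response Right selects leaves Left with a sum at least as favourable as $G+X$ by the defining property $G^{LR} \geqslant G$.

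For the easy direction, I would argue that if Left wins $G^L+X$ with Right to move, then from $G+X$ she simply plays $G \to G^L$ and inherits that winning position. The only bookkeeping needed is that all the sums involved are defined, which follows because $G$ being a Left-hammerzug forces $G \notin\{\infty,\overline{\infty}\}$ and $G^L\notin\{\infty,\overline{\infty}\}$, while Right-quietness of $X$ gives $X\neq\overline{\infty}$.

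For the harder direction, assuming $o^L(G+X)=\mathbf{L}$, I would case-split on Right's reply from $G^L+X$. If Right plays inside $G^L$ to some $G^{LR'}+X$, the hammerzug inequality $G^{LR'}\geqslant G$ combines with Theorem~\ref{thm:compadd} to give $G^{LR'}+X \geqslant G+X$, so $o^L(G^{LR'}+X)\geqslant o^L(G+X)=\mathbf{L}$ and Left wins with her turn. If Right plays inside $X$ to $G^L+X^R$, then Right-quietness of $X$ guarantees $X^R\neq\overline{\infty}$, so Left wins immediately by playing $G^L\to\infty$, since $\infty+X^R=\infty$.

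The only mild subtlety I anticipate is the edge case $X=\infty$, in which Theorem~\ref{thm:compadd} cannot be invoked directly; here, however, both $G+X$ and $G^L+X$ collapse to $\infty$ (since $G,G^L\neq\overline{\infty}$), so both individualized outcomes are trivially $\mathbf{L}$ and the equality holds. That is essentially the whole proof: the Right-quietness hypothesis is used once, to prevent Right from parrying the hammerzug with a check of his own in $X$, and the hammerzug hypothesis is used once, to propagate winning outcomes through Right's local responses in $G^L$.
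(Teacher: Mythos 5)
Your proof is correct and follows essentially the same route as the paper's: the easy inequality comes from Left's option to move to $G^L+X$, and the converse comes from observing that Right must answer the check locally (Right-quietness of $X$ rules out a profitable reply in $X$, since Left then mates with $\infty$) and that every local reply satisfies $G^{LR}\geqslant G$, so it preserves Left's win. You are in fact slightly more explicit than the paper, which leaves the "Right must respond locally" step and the degenerate case $X=\infty$ implicit.
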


\begin{proof}
We prove that $o^L(G+X)\geqslant o^R(G^L+X)$ and  $o^R(G^L+X)\geqslant o^L(G+X)$. The first inequality is evident, since, in a worst-case scenario, Left can move to $G^L+X$ in $G+X$. Regarding the second inequality, we have $o^R(G^L+X)=o^L(G^{LR}+X)$ where $G^{LR}$ is the best defense for Right against the Left-check. According to Definition \ref{def:hammerzug}, $o^L(G^{LR}+X)\geqslant o^L(G+X)$, thus $o^R(G^L+X)\geqslant o^L(G+X)$.
\end{proof}

Returning to the classical structure $\Npc$, there are classes of infinitesimals of great relevance. The powers of up $\uparrow\,=\{0\,|\,*\}$, $\uparrow^2\,=\{0\,|\,\downarrow+*\}$, $\uparrow^3\,=\{0\,|\,\downarrow+\downarrow^2+*\}$,\,\ldots,\;are infinitesimals with respect to the previous ones. This means that $n.\!\!\uparrow^2<\uparrow$, regardless of the number of copies $n$. The same applies to $n.\!\!\uparrow^3<\uparrow^2$, $n.\!\!\uparrow^4<\uparrow^3$, and so on. The tinies and minies exhibit a similar behavior. The game tiny, denoted by $\cgtiny{1}$, is the positive game $\{0\,|\,\{0\,|\,-1\}\}$. Similarly, the game miny, denoted by $\cgminy{1}$, is the negative game $\{\{1\,|\,0\}\,|\,0\}$. In general, given a positive integer $n$, we have $\cgtiny{n}=\{0\,|\,\{0\,|\,-n\}\}$ and $\cgminy{n}=\{\{n\,|\,0\}\,|\,0\}$. These are infinitesimals of a ``magnitude'' different from the powers of up. Saying the same rigorously, in addition to the facts $n.\;\cgtiny{2}<\cgtiny{1}$, $n.\;\cgtiny{3}<\cgtiny{2}$, and so on, $\cgtiny{1}$ is infinitesimal with respect to $\uparrow^k$, regardless of $k$. Now, for a better understanding of what follows, it is helpful to make a brief observation about tinies and minies.

In a game like $\{0\,|\,-80\}+\cgminy{100}$, Left, playing first, wins by playing in the infinitesimal component and not in the hot component. This happens because Left can make a significant threat in $\cgminy{100}$. This threat is powerful enough to counteract the gain that Right can achieve in the other component. However, in game practice, it is not common to have a component with the value $\cgminy{100}$. This is because it would require a threat of $100$ in ``territorial'' terms to be defended with a move to $0$, a situation that would require a ruleset with very special characteristics. However, in $\Np$, ``powerful threat'' can be replaced by ``terminal threat'', and given that mate threats are very frequent, there is a certain type of games that are very common. The ultimate tiny (miny) can be defined as follows.

\begin{definition}[Pathetic Infinitesimals]\label{def:pathetic}
The game $\{0\,|\,\{0\,|\,\overline{\infty}\}\}=\{0\,|\,\circlearrowleft^{0}\}$ is called \emph{pathetic tiny} and denoted by $\cgtiny{\infty}$.  The game $\{\{\infty\,|\,0\}\,|\,0\}=\{\circlearrowright^{0}\,|\,0\}$ is called \emph{pathetic miny} and denoted by $\cgminy{\infty}$.
\end{definition}

It is trivial to verify that $\cgtiny{\infty}\!>0\!>\cgminy{\infty}$. Furthermore, the name ``pathetic'' is easily explainable. As observed in the previous paragraph, the presence of $\cgminy{100}$ in a disjunctive sum tends to be ephemeral, as in most cases, Left uses her threat to replace, to her advantage, a negative game by zero. Once again, in $\Np$, ``in most cases'' can be transformed into ``always'', as the following theorem establishes -- this is the first of three notable facts about the pathetic infinitesimals. The ``pathetic nature'' of these positive and negative values is due to their ephemerality. Both a pathetic tiny and a pathetic miny have a very limited lifespan in a disjunctive sum, as, at the first opportunity, one of the players makes them disappear.

\begin{theorem}\label{thm:tinyham} The pathetic tiny $\cgtiny{\infty}$ is a Right-hammerzug. Similarly, the pathetic miny $\cgminy{\infty}$ is a Left-hammerzug.
\end{theorem}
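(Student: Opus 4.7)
The plan is to unfold the definitions and appeal to the order-outcome bijection, since each pathetic infinitesimal has such a simple form that the verification amounts to producing a single candidate check and confirming one order inequality.

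First I would handle the pathetic tiny. The game $\cgtiny{\infty}=\{0\,|\,\{0\,|\,\overline{\infty}\}\}$ has a unique Right option, namely $R=\{0\,|\,\overline{\infty}\}$. Since $R$'s unique Right option is $\overline{\infty}$, this $R$ fits the shape $\{R^{\mathcal{L}}\,|\,\overline{\infty}\}$ required of a Right-check in (the Right-analog of) Definition~\ref{def:hammerzug}. Its only Left option is $R^L=0$, so to witness that $\cgtiny{\infty}$ is a Right-hammerzug it suffices to verify the single inequality $0\leqslant \cgtiny{\infty}$.

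To check that $0\leqslant \cgtiny{\infty}$, I would invoke Corollary~\ref{cor:ftnp2} and show $\cgtiny{\infty}\in\mathcal{L}$ by direct case analysis on the game tree. If Left plays first she moves to $0$, and then Right's only option in $0=\{\overline{\infty}\,|\,\infty\}$ is the suicidal move to $\infty$, so Left wins. If Right plays first he is forced to $\{0\,|\,\overline{\infty}\}$ (his alternative being the terminating suicide $\overline{\infty}$); Left then replies with $0$, after which Right's only move is again $\infty$, and Left wins. Hence $\cgtiny{\infty}\in\mathcal{L}$, so by Corollary~\ref{cor:ftnp2} we have $\cgtiny{\infty}>0$, which in particular gives $0\leqslant \cgtiny{\infty}$, as required. (This also re-establishes the inequality stated right before the theorem.)

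The pathetic miny is completely symmetric. The form $\cgminy{\infty}=\{\{\infty\,|\,0\}\,|\,0\}$ has a unique Left option $L=\{\infty\,|\,0\}$, which is a Left-check of shape $\{\infty\,|\,L^{\mathcal{R}}\}$ with unique Right option $L^R=0$, so being a Left-hammerzug reduces to the inequality $0\geqslant \cgminy{\infty}$. The dual outcome analysis, or equivalently conjugation applied to the tiny argument together with the fact that $-\cgtiny{\infty}\cong\cgminy{\infty}$, yields $\cgminy{\infty}\in\mathcal{R}$ and thus $\cgminy{\infty}<0$ by Corollary~\ref{cor:ftnp2}. No step here looks obstructive: the only subtlety is remembering that the single Left option of the witnessing Right-check (respectively the single Right option of the witnessing Left-check) is $0$, after which the hammerzug condition collapses to the already-noted strict inequalities $\cgtiny{\infty}>0>\cgminy{\infty}$.
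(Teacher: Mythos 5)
Your proposal is correct and follows essentially the same route as the paper, which simply cites Definition~\ref{def:hammerzug} together with the already-noted inequalities $\cgtiny{\infty}>0>\cgminy{\infty}$; you merely make explicit the identification of the witnessing Right-check (resp.\ Left-check) and re-derive the inequality via Corollary~\ref{cor:ftnp2}. One small slip worth fixing: the parenthetical ``his alternative being the terminating suicide $\overline{\infty}$'' is doubly off, since $\cgtiny{\infty}=\{0\,|\,\{0\,|\,\overline{\infty}\}\}$ has no Right option $\overline{\infty}$, and a Right move to $\overline{\infty}$ would in any case be a \emph{win} for Right rather than a suicide --- but this does not affect the argument, whose main line is sound.
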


\begin{proof}
This is a direct consequence of Definition \ref{def:hammerzug} and $\cgtiny{\infty}\!>0\!>\cgminy{\infty}$.
\end{proof}

The second notable fact concerns a certain ``lack of continuity'' exhibited by $\Np$. The smallest positive element and the largest negative element are effectively reached.

\begin{theorem}\label{thm:smallest} The game $\cgtiny{\infty}$ is the smallest positive game of all, i.e., if $G\in \Np$ is such that $G>0$, then $G\geqslant \cgtiny{\infty}$. Similarly, the game $\cgminy{\infty}$ is the largest negative game of all.\end{theorem}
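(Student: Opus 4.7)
The plan is to establish $G \geqslant \cgtiny{\infty}$ by verifying the traditional maintenance property $\mathbf{M}(G,\cgtiny{\infty})$ and invoking Theorem~\ref{thm:traditional}; the statement for $\cgminy{\infty}$ will follow by the symmetric argument via conjugation (applied to $-G > 0$ if we know negation reverses order, or else by mirroring the proof). The case $G = \infty$ is immediate from Theorem~\ref{thm:inflarge}, so assume $G > 0$ with $G \neq \infty$, which, by Corollary~\ref{cor:ftnp2}, means $G \in \mathcal{L}$.

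Unpacking $\cgtiny{\infty} = \{0 \mid \{0 \mid \overline{\infty}\}\}$, the condition $\mathbf{M}(G, \cgtiny{\infty})$ amounts to two requirements. For the unique Left option $\cgtiny{\infty}^L = 0$, we need some $G^L \geqslant 0$; this follows from $G \in \mathcal{L}$ via Theorem~\ref{thm:ftnp}, since Left's winning first move $G^L$ must lie in $\mathcal{L} \cup \mathcal{P}$, equivalently $G^L \geqslant 0$. For each Right option $G^R$, we need either some $G^{RL} \geqslant \cgtiny{\infty}$ or $G^R \geqslant \{0 \mid \overline{\infty}\}$, and the plan is to prove the second alternative uniformly. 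This in turn is verified by another application of Theorem~\ref{thm:traditional}: if $G^R = \infty$ the conclusion is Theorem~\ref{thm:inflarge}, and otherwise $G^R \neq \overline{\infty}$ (else Right wins $G$ at once, contradicting $G \in \mathcal{L}$), so $\mathbf{M}(G^R, \{0 \mid \overline{\infty}\})$ reduces to showing that each $G^{RR}$ satisfies $G^{RR} \geqslant \overline{\infty}$ (immediate) and that some $G^{RL}$ satisfies $G^{RL} \geqslant 0$. The latter holds because $G \in \mathcal{L}$ forces Left to win $G^R$ playing first, which, since $G^R \neq \infty$, produces $G^{RL} \in \mathcal{L} \cup \mathcal{P}$, i.e.\ $G^{RL} \geqslant 0$ by Theorem~\ref{thm:ftnp}.

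The only conceptual obstacle is that $\cgtiny{\infty}$ is not a Conway game---its sole Right option is a Right-check, hence $\cgtiny{\infty}$ is non-invertible by Theorem~\ref{thm:invertible}---so $G \geqslant \cgtiny{\infty}$ cannot be reduced to an outcome statement about $G - \cgtiny{\infty}$ via Theorem~\ref{thm:invcomparison}. What makes the ordinary maintenance property (rather than the $\infty$-maintenance property of Theorem~\ref{thm:constructive}) sufficient is that the ``dummy'' Right-check $\{0 \mid \overline{\infty}\}$ sitting at $\cgtiny{\infty}^R$ is weak enough to be dominated by every Right option of any positive $G$, so no genuine forcing-sequence analysis enters the argument.
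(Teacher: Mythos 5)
Your proposal is correct and follows essentially the same route as the paper: it verifies the traditional maintenance property $\mathbf{M}(G,\cgtiny{\infty})$ via the two facts that some $G^L\geqslant 0$ (from $G\in\mathcal{L}$ and Theorem~\ref{thm:ftnp}) and that every $G^R\geqslant\{0\,|\,\overline{\infty}\}$ (itself obtained by checking $\mathbf{M}(G^R,\{0\,|\,\overline{\infty}\})$), then concludes by Theorem~\ref{thm:traditional}. The only quibble is the side remark that having a Right-check as an option makes $\cgtiny{\infty}$ non-invertible "by Theorem~\ref{thm:invertible}" --- that theorem excludes Conway \emph{games}, not merely Conway \emph{forms}, so non-invertibility needs a further argument (e.g.\ via Theorem~\ref{thm:idempotents}); but this remark plays no role in the proof.
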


\begin{proof} If $G=\infty$, there is nothing to prove, so assume otherwise. According to Theorem~\ref{thm:ftnp}, $G>0$ implies the existence of some $G^L$ such that $G^L\geqslant 0$ (Fact 1). On the other hand, the same theorem guarantees that for each Right option $G^R\neq\infty$, there exists $G^{RL}$ such that $G^{RL}\geqslant 0$. For these cases, it is straightforward to verify $\mathbf{M}(G^R,\{0\,|\,\overline{\infty}\})$, since for the Left option to $0$ in $\{0\,|\,\overline{\infty}\}$, there exists some $G^{RL}\geqslant 0$. Consequently, according to Theorem \ref{thm:traditional}, we have $G^R\geqslant\{0\,|\,\overline{\infty}\}$ (Fact 2).

With the help of these two facts, let us check that the pair $(G,\cgtiny{\infty})$ satisfies also the maintenance property as established in Definition \ref{def:maintenance}, and, therefore, by Theorem \ref{thm:traditional}, $G\geqslant \cgtiny{\infty}$. For the Left move to $0$ in $\cgtiny{\infty}$, there exists some $G^L\geqslant 0$ (Fact 1). For each Right option $G^R$, we have $G^R\geqslant\{0\,|\,\overline{\infty}\}$ (Fact 2).

Proving that $\cgminy{\infty}$ is the largest negative game of all is analogous.
\end{proof}

To better understand the third notable fact, recall that $\uparrow^2$ is infinitesimal with respect to $\uparrow$, i.e., any number of copies of $\uparrow^2$ is less than a single copy of $\uparrow$. But, it is important to note that this is entirely different from saying that having two copies or just one copy of $\uparrow^2$ is irrelevant. If only $\downarrow$ is present, Left loses all disjunctive sums of the form $n.\!\!\uparrow^2+\downarrow$. However, in other sums, having two copies instead of one can be the difference between winning or losing. In fact, we have $2.\!\!\uparrow^2>\uparrow^2$. Furthermore, in the classical structure $\Npc$, if $G>0$, then $2.G>G$.

The terminating games $\infty$ and $\overline{\infty}$ bring the existence of nontrivial finite indempotents. Having many or few copies of a pathetic tiny is irrelevant since, at the slightest opportunity, Right disposes of them all at once through a sequence of checks.

\begin{theorem}[Pathetic Idempotents]\label{thm:idempotents} The games $\cgtiny{\infty}$ and $\cgminy{\infty}$ are idempotents.
\end{theorem}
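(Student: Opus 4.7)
The plan is to prove $\cgtiny{\infty} + \cgtiny{\infty} = \cgtiny{\infty}$; the statement for $\cgminy{\infty}$ then follows by the Left--Right symmetry inherent in Definition \ref{def:pathetic}. One inequality is immediate: since $\cgtiny{\infty} \geqslant 0$ by Theorem \ref{thm:smallest}, Theorem \ref{thm:compadd} gives $\cgtiny{\infty} + \cgtiny{\infty} \geqslant 0 + \cgtiny{\infty} = \cgtiny{\infty}$. The substance of the proof is the reverse inequality $\cgtiny{\infty} \geqslant \cgtiny{\infty} + \cgtiny{\infty}$, which I would establish by verifying the traditional maintenance property $\mathbf{M}(\cgtiny{\infty},\, \cgtiny{\infty} + \cgtiny{\infty})$ and invoking Theorem \ref{thm:traditional}; the heavier $\infty$-maintenance machinery of Theorem \ref{thm:constructive} is not needed.

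Unpacking $\cgtiny{\infty} = \{0 \,|\, \{0\,|\,\overline{\infty}\}\}$, the unique Right option of $G := \cgtiny{\infty}$ is $\{0\,|\,\overline{\infty}\}$, and, up to isomorphism, the unique Left option of $H := \cgtiny{\infty} + \cgtiny{\infty}$ is $\cgtiny{\infty}$ itself. The natural matching prescribed by $\mathbf{M}$ is, for Condition 1, $G^R = \{0\,|\,\overline{\infty}\}$ paired with $H^R := \{0\,|\,\overline{\infty}\} + \cgtiny{\infty}$, and for Condition 2, $H^L = \cgtiny{\infty}$ paired with $H^{LR} := \{0\,|\,\overline{\infty}\}$. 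The whole argument therefore reduces to two sub-claims: (i) $\cgtiny{\infty} \geqslant \{0\,|\,\overline{\infty}\}$, and (ii) $\{0\,|\,\overline{\infty}\} \geqslant \{0\,|\,\overline{\infty}\} + \cgtiny{\infty}$.

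Sub-claim (i) is a short $\mathbf{M}$-check: the Right option $\{0\,|\,\overline{\infty}\}$ of $\cgtiny{\infty}$ dominates $\overline{\infty}$ (a Right option of the right-hand side) by Theorem \ref{thm:inflarge}, and the Left option $0$ matches itself. Sub-claim (ii) is the heart of the matter and formalises the intuition stated before the theorem, that Right's terminating threat $\overline{\infty}$ absorbs any positive infinitesimal added against it: Condition 1 is trivial since the left-hand side has only the Right option $\overline{\infty}$, which matches the Right option $\overline{\infty} + \cgtiny{\infty} = \overline{\infty}$ of the right-hand side; and Condition 2 succeeds for the two Left options $\cgtiny{\infty}$ and $\{0\,|\,\overline{\infty}\}$ of the right-hand side via $H^{LR} = \{0\,|\,\overline{\infty}\}$ (reflexivity) and $H^{LR} = \overline{\infty}$ (Theorem \ref{thm:inflarge}), respectively. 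The chief pitfall to avoid is the temptation to compare by subtraction: because $\cgtiny{\infty}$ and $\{0\,|\,\overline{\infty}\}$ are pressing games, hence non-invertible by Theorem \ref{thm:invertible}, Theorem \ref{thm:invcomparison} is unavailable and one must stay within the options-only framework throughout.
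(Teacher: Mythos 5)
Your proof is correct and follows essentially the same route as the paper's: one inequality from $\cgtiny{\infty}>0$ via Theorem \ref{thm:compadd}, the other via the traditional maintenance property and Theorem \ref{thm:traditional}, with your sub-claims (i) and (ii) corresponding exactly to the paper's auxiliary Facts 2 and 1. The only cosmetic difference is that the paper first rewrites $\cgtiny{\infty}+\cgtiny{\infty}$ as $\{\cgtiny{\infty}\,|\,\{\cgtiny{\infty}\,|\,\overline{\infty}\}\}$ before checking $\mathbf{M}$, whereas you run the same check directly on the literal sum form.
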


\begin{proof}
We prove only the equality $\cgtiny{\infty}+\cgtiny{\infty}=\cgtiny{\infty}$, since the proof for $\cgminy{\infty}+\cgminy{\infty}=\cgminy{\infty}$ is symmetric.

The inequality $\cgtiny{\infty}+\cgtiny{\infty}\geqslant\cgtiny{\infty}$ is a trivial consequence of Theorem \ref{thm:compadd}, since $\cgtiny{\infty}>0$.

Regarding the inequality $\cgtiny{\infty}+\cgtiny{\infty}\leqslant\cgtiny{\infty}$, we begin by observing that the game $\cgtiny{\infty}+\cgtiny{\infty}$ is $\{\cgtiny{\infty}\,|\,\{\cgtiny{\infty}\,|\,\overline{\infty}\}\}$. Additionally, $\mathbf{M}(\{0\,|\,\overline{\infty}\},\{\cgtiny{\infty}\,|\,\overline{\infty}\})$ because, for the Left move to $\cgtiny{\infty}$ in $\{\cgtiny{\infty}\,|\,\overline{\infty}\}$, there is $\{0\,|\,\overline{\infty}\}\geqslant \{0\,|\,\overline{\infty}\}$, and for the Right move to $\overline{\infty}$ in $\{0\,|\,\overline{\infty}\}$, there is $\overline{\infty}\geqslant \overline{\infty}$. Hence, according to Theorem \ref{thm:traditional}, we have $\{0\,|\,\overline{\infty}\}\geqslant\{\cgtiny{\infty}\,|\,\overline{\infty}\}$ (Fact 1). Moreover,
$\mathbf{M}(\cgtiny{\infty},\{0\,|\,\overline{\infty}\})$  because, for the Left move to $0$ in $\{0\,|\,\overline{\infty}\}$, there is $0\geqslant 0$, and for the Right move to $\{0\,|\,\overline{\infty}\}$ in $\cgtiny{\infty}$, there is $\{0\,|\,\overline{\infty}\}\geqslant \overline{\infty}$. Hence, according to Theorem \ref{thm:traditional}, we have $\cgtiny{\infty}\geqslant\{0\,|\,\overline{\infty}\}$ (Fact 2).

With the help of these facts, let us check that $\mathbf{M}(\cgtiny{\infty},\{\cgtiny{\infty}\,|\,\{\cgtiny{\infty}\,|\,\overline{\infty}\}\})$. For the Left move to $\cgtiny{\infty}$ in $\{\cgtiny{\infty}\,|\,\{\cgtiny{\infty}\,|\,\overline{\infty}\}\}$, there is $\cgtiny{\infty}\geqslant \{0\,|\,\overline{\infty}\}$ (Fact 2). For the Right move to $\{0\,|\,\overline{\infty}\}$ in $\cgtiny{\infty}$, there is $\{0\,|\,\overline{\infty}\}\geqslant\{\cgtiny{\infty}\,|\,\overline{\infty}\}$ (Fact 1). Hence, according to Theorem \ref{thm:traditional}, we have $\cgtiny{\infty}\geqslant\{\cgtiny{\infty}\,|\,\{\cgtiny{\infty}\,|\,\overline{\infty}\}\}$.
\end{proof}

\begin{observation}
Although, by Theorem \ref{thm:strict}, it is true in the case where $J=0$, in general, it is not true that $G>0$ and $H\geqslant J$ implies $G+H> J$. Just consider $G=H=J=\cgtiny{\infty}$ and Theorem \ref{thm:idempotents} to observe this fact.
\end{observation}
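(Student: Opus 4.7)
The plan is to exhibit the concrete witness $G=H=J=\cgtiny{\infty}$ and verify that all the hypotheses of the tentative implication hold while the conclusion fails. First, I would recall from the paragraph following Definition \ref{def:pathetic} (and again from Theorem \ref{thm:smallest}) that $\cgtiny{\infty}>0$, so choosing $G=\cgtiny{\infty}$ immediately secures the strict-positivity hypothesis $G>0$. Second, taking $H=J=\cgtiny{\infty}$, the reflexivity of $\geqslant$ guaranteed by Theorem \ref{thm:relations} gives $H\geqslant J$ trivially. Hence both premises of the candidate implication are met.

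The crux is then to observe that the conclusion $G+H>J$ cannot hold. By Theorem \ref{thm:idempotents}, the pathetic tiny is idempotent, so
\[
G+H \;=\; \cgtiny{\infty}+\cgtiny{\infty} \;=\; \cgtiny{\infty} \;=\; J.
\]
Consequently $G+H$ equals $J$ and, in particular, is not strictly greater than $J$. This refutes the candidate implication ``$G>0$ and $H\geqslant J$ imply $G+H>J$'' in the general setting of $\Np$, even though the special case $J=0$ holds by Theorem \ref{thm:strict}.

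There is no substantive obstacle in this argument: once the positivity $\cgtiny{\infty}>0$ and the idempotence of Theorem \ref{thm:idempotents} are in hand, the verification is a two-line computation. The only point that deserves emphasis is that it is strict inequality that breaks down; the non-strict conclusion $G+H\geqslant J$ continues to hold by Theorem \ref{thm:compadd} (or Theorem \ref{thm:disjunctiveorder}), consistent with the positivity of $G$. The deeper moral underlying the observation is that the failure is not accidental but structural: $\Np$ admits nontrivial finite idempotents (absent in $\Npc$), and the pathetic infinitesimals $\cgtiny{\infty}$ and $\cgminy{\infty}$ provide the cleanest illustration that strict monotonicity of the disjunctive sum, so natural in classical normal play, no longer survives beyond the base case $J=0$.
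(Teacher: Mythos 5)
Your argument is exactly the paper's: the witness $G=H=J=\cgtiny{\infty}$ together with $\cgtiny{\infty}>0$ and the idempotence from Theorem \ref{thm:idempotents} gives $G+H=\cgtiny{\infty}=J$, so the strict conclusion fails. The verification is correct and complete.
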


Figure \ref{fig12} contains a visualization of the affine game line. On one hand, the Conway game line is closed with $\infty$ and $\overline{\infty}$ (bold). On the other hand, the descending set of lines respecting some scales of infinitesimals is also closed with $\cgtiny{\infty}$ and $\cgminy{\infty}$ (bold). Note that the way to close this set is not with a line but only with two elements. This happens because these are idempotent. The game $\moon\!\!=\{0,\circlearrowright^{0}\,|\,0,\circlearrowleft^{0}\}$ will be discussed later.

\begin{figure}[!htb]
\begin{center}
\scalebox{0.6}{
\includegraphics[scale=1]{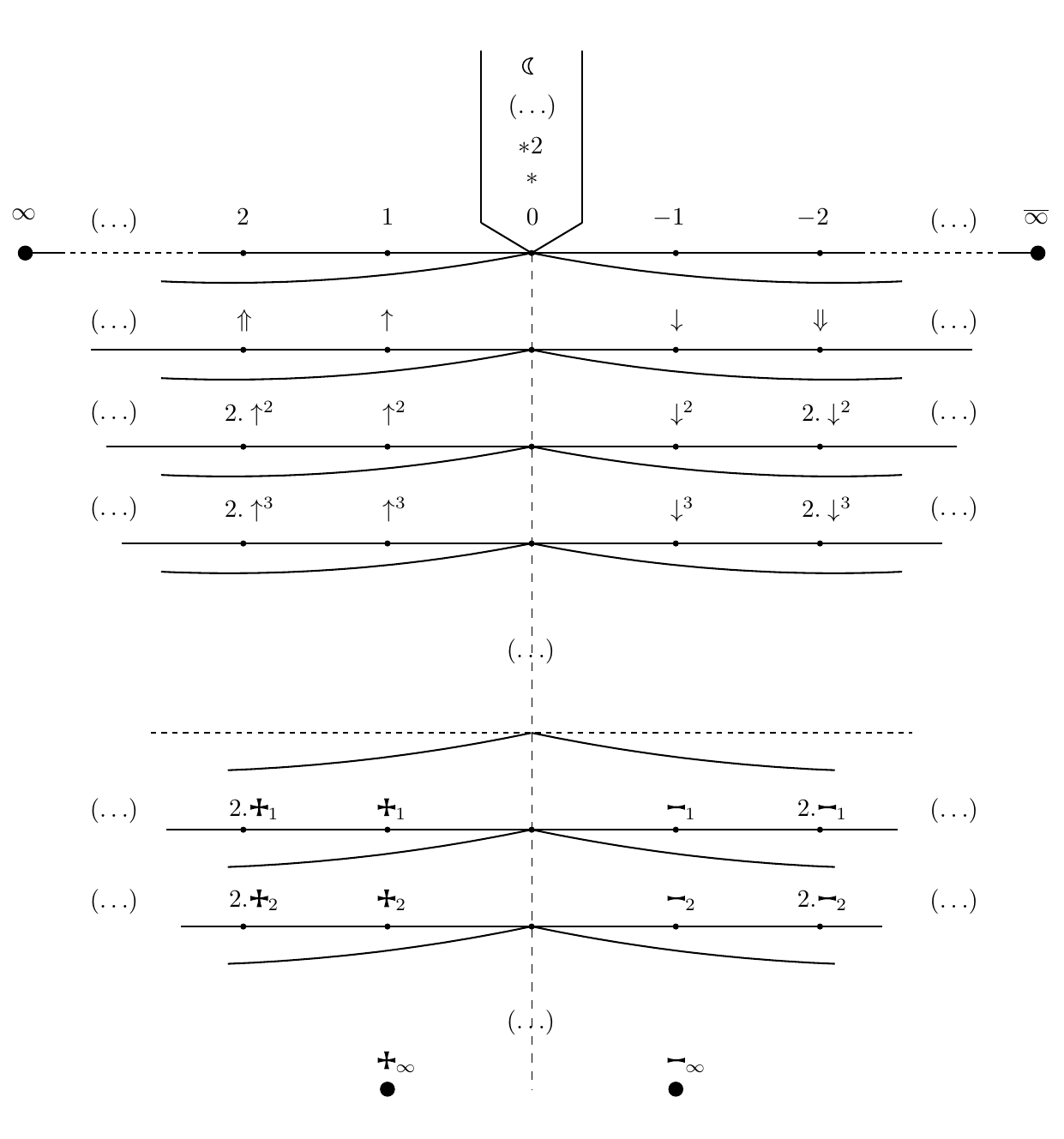}}
\end{center}
\vspace{-0.4cm}
\caption{Affine game line.}
\label{fig12}
\end{figure}
\clearpage
\section{Lattice structure of affine games born by day $n$}
\label{sec:lattice}

In this section, we discuss the lattice structure of the affine games born by day $n$. To this end, it is important to make a preliminary observation about the notion of birthday. The formalization presented in Definition \ref{def:recursion} is commonly referred to as the ``formal birthday''. Since the equality of games is an equivalence relation, one can consider the quotient space $\Np/_=$ and it makes sense to define the \emph{birthday} of $G$, $b(G)$, as the ``birthday of the equivalence class of $G$''. More precisely, the birthday of $G$ is the smallest formal birthday among the games $G'$ that are equivalent to $G$. We observe that, in $\mathbb{A}_{n}$, all games have a formal birthday less than or equal to $n$, but the formal birthday can be greater than the birthday. For example, $\{-2\,|\,2\}\in\mathbb{A}_{3}$, with a formal birthday of $3$ and a birthday of $0$, since $\{-2\,|\,2\}=0$.

We will prove that the affine games born by day $n$ form a lattice structure. This has two interpretations: the poset $(\mathbb{A}_{n},\geqslant)$ is a lattice structure where many equivalent games overlap in the same node, or the poset $(\mathbb{G}_{n},\geqslant)$, consisting of equivalence classes with a birthday less than or equal to $n$, is a lattice structure without overlaps. We opt for the second, and whenever we write $G\in\mathbb{G}_{n}$ or $G\geqslant H$, we are using representatives from the equivalence classes. That said, we begin with a definition necessary for the main proof.

\begin{definition}\label{def:joinmeet}
Let $G\in\mathbb{G}_{n}$. The sets $\lfloor G \rfloor_n$ and $\lceil G \rceil_n$ are defined in the following way:

\begin{itemize}
  \item $\lfloor G \rfloor_n=\{W\text{ such that there is some }\{\GL\,|\,\GR\}\text{ equal to }G\text{, where the}$\\

  \vspace{-1cm}
  \hspace{1.2cm}$\text{birthdays of the games in }\GL\text{ and }\GR\text{ are less than }n\text{, and }W\in\GL\};$\\

    \vspace{-0.4cm}
    \item $\lceil G \rceil_n=\{W\text{ such that there is some }\{\GL\,|\,\GR\}\text{ equal to }G\text{, where the}$\\

  \vspace{-1cm}
  \hspace{1.2cm}$\text{birthdays of the games in }\GL\text{ and }\GR\text{ are less than }n\text{, and }W\in\GR\}.$
\end{itemize}
 \end{definition}

We are now ready for the main result of this section.

\begin{theorem}[Lattice Structure]\label{thm:lattice}
Let $n\geqslant -1$, and $G,H\in\mathbb{G}_{n}\setminus\{\infty,\overline{\infty}\}$. The poset $(\mathbb{G}_{n},\geqslant)$ is a lattice where
\begin{itemize}
  \item $G\vee H=\{\lfloor G \rfloor_n\cup\lfloor H \rfloor_n\,\|\,(\lceil G \rceil_n\cap\lceil H \rceil_n)\cup\{\infty\}\}$,
  \item $G\wedge H=\{(\lfloor G \rfloor_n\cap\lfloor H \rfloor_n)\cup\{\overline{\infty}\}\,\|\,\lceil G \rceil_n\cup\lceil H \rceil_n\}$.
\end{itemize}
 \end{theorem}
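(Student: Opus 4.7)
The strategy is to induct on $n$ and derive the meet formula from the join formula via conjugation (Definition \ref{def:conjugate}), noting that $G\mapsto -G$ is an order-reversing involution that swaps $\lfloor\cdot\rfloor_n$ with $\lceil\cdot\rceil_n$ and $\infty$ with $\overline{\infty}$. The case $n=-1$ is vacuous and $n=0$ reduces to a short direct check on the finite list of equivalence classes in $\mathbb{G}_0\setminus\{\infty,\overline{\infty}\}$, so I focus on the inductive step for the join formula.

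To verify that $G\vee H$ is an upper bound, I would check the maintenance condition $\mathbf{M}(G\vee H,G)$ from Definition \ref{def:maintenance} and conclude by Theorem \ref{thm:traditional}. A Right option of $G\vee H$ is either $\infty$, in which case Theorem \ref{thm:inflarge} gives $\infty\geqslant G^R$ for any $G^R$ (and $G\neq\overline{\infty}$ guarantees such an option exists), or an element of $\lceil G\rceil_n$, so it coincides verbatim with some $G^R$ in an equivalent form of $G$. A Left option $G^L$ of any form of $G$ whose options have birthday ${<}n$ lies in $\lfloor G\rfloor_n\subseteq(G\vee H)^{\mathcal{L}}$, so it also occurs as a Left option of $G\vee H$. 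The inclusion $G\vee H\in\mathbb{G}_n\setminus\{\infty,\overline{\infty}\}$ follows from the formal birthday bound and from the nonempty option sets (at least $\infty$ on the right and any $G^L$ on the left). The statement $G\vee H\geqslant H$ is identical with the roles of $G$ and $H$ swapped.

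The heart of the proof is to establish that $G\vee H$ is the least upper bound. Given $K\geqslant G$ and $K\geqslant H$, I plan to verify the $\infty$-maintenance property $\mathbf{M}^\infty(K,G\vee H)$ from Definition \ref{def:maintenanceinf} and conclude by Theorem \ref{thm:constructive}; the inputs are $\mathbf{M}^\infty(K,G)$ and $\mathbf{M}^\infty(K,H)$, supplied by Theorem \ref{thm:firstimpli}. The Left-option clause of $\mathbf{M}^\infty(K,G\vee H)$ is routine because every Left option of $G\vee H$ is, by construction, already a Left option of some equivalent form of $G$ or of $H$, so the lockings and strategies of responses certified by $\mathbf{M}^\infty(K,G)$ or $\mathbf{M}^\infty(K,H)$ transfer unchanged.

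The main obstacle is the Right-option clause. For each $K^R$ and each pair of strategies of responses, the hypotheses produce separate witnesses from $G$ and from $H$: either a Left-silent $K^{RL}$ dominating $G$ or $H$ (which discharges the condition directly), or games $G^R\in\lceil G\rceil_n$ and $H^R\in\lceil H\rceil_n$ with $K^R\geqslant G^R$ and $K^R\geqslant H^R$. The difficulty is that these witnesses need not coincide, whereas the Right options of $G\vee H$ are constrained to $\lceil G\rceil_n\cap\lceil H\rceil_n\cup\{\infty\}$. My plan is to invoke the inductive hypothesis in $\mathbb{G}_{n-1}$ to form the join $G^R\vee H^R$, which is dominated by $K^R$ by least-upper-boundness and which, satisfying $G^R\vee H^R\geqslant G^R$ and $G^R\vee H^R\geqslant H^R$, can be adjoined as a dominated Right option in equivalent forms of both $G$ and $H$ without changing their values, so that it lands in $\lceil G\rceil_n\cap\lceil H\rceil_n$ and thus occurs as a Right option of $G\vee H$. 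The explicit $\infty$ in $(G\vee H)^{\mathcal{R}}$ is what keeps $G\vee H$ a legitimate non-terminating affine game when the intersection would otherwise be empty, and simultaneously absorbs the degenerate cases where some $G^R$ is itself $\infty$. Reconciling this insertion step with the strategies of responses required by $\mathbf{M}^\infty$ — rather than only by the simpler $\mathbf{M}$, which would be insufficient on account of the forcing sequences highlighted in Section \ref{sec:optionsonly} — is the step where I expect the argument to demand the most care.
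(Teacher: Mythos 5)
Your overall architecture differs from the paper's, and in its central step it does not go through. The paper does not route the proof through $\mathbf{M}$ or $\mathbf{M}^\infty$ at all: both directions are argued directly from Definition \ref{def:orderequivalence}, by examining, for each distinguishing game $X$, the winning first moves in $G+X$, $W+X$ and $G\vee H+X$, with moves in $X$ handled by induction and the rest by mimicry. This matters because the options of $G\vee H$ are harvested from \emph{many different} equivalent forms of $G$ and of $H$, whereas the maintenance properties are statements about one fixed pair of forms. Already in your upper-bound step, two Right options of $G\vee H$ may originate in two different forms $G'$ and $G''$ of $G$, and $\mathbf{M}(G\vee H,G')$ must then handle the option coming from $G''$ with no evident witness among $G'^{\mathcal{R}}$. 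The paper's per-$X$, per-option argument (``this option is a Right option of \emph{some} form equal to $G$, so Right wins that form plus $X$, hence wins $G+X$'') sidesteps this entirely.

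The genuine gap is in the least-upper-bound direction. The paper's key observation --- which dissolves the ``witnesses need not coincide'' obstacle you identify --- is that for any upper bound $W$ of $G$ and $H$, \emph{every} Right option $W^R$ already lies in $\lceil G\rceil_n\cap\lceil H\rceil_n$: one shows $G=\{\GL\,|\,\GR\cup\{W^R\}\}$, where $\geqslant$ holds by the Monotone Principle (Theorem \ref{thm:greediness}) and $\leqslant$ holds because a winning Right move to $W^R+X$ forces, via $W\geqslant G$, a winning Right first move in $G+X$. Hence $W^R$ itself is a Right option of $G\vee H$, and Right simply repeats the move $W^R+X$ there; no join $G^R\vee H^R$ is ever formed and no induction on $n$ is needed. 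Your substitute plan is not salvageable as written, for a structural reason beyond the ``care'' you defer: the witnesses delivered by $\mathbf{M}^\infty(K,G)$ in a locked pair are not first-level Right options of $G$ but Right-silent options of followers $\overleftarrow{G}_b$ reached after forcing sequences, indexed by pairs of strategies in $\mathfrak{R}(K^R)\times\mathfrak{L}(G)$. Those objects do not live in $\lceil G\rceil_n$, so adjoining $G^R\vee H^R$ as a dominated first-level Right option does not connect to them, and the strategy spaces $\mathfrak{L}(G\vee H)$, whose Right-checks are drawn from the intersection of the ceilings, do not decompose into $\mathfrak{L}(G)$ and $\mathfrak{L}(H)$ in the way your transfer requires. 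Two secondary points: the paper obtains the meet by the symmetric argument rather than by conjugation (that $G\mapsto -G$ is order-reversing on all of $\Np$ is never established in the paper), and the base cases of your induction on $n$ are not needed in the paper's scheme.
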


\begin{proof}
If either of the games $G$ or $H$ is $\infty$ or $\overline{\infty}$, the elements $G\vee H$ and $G\wedge H$ arise from the absorbing nature of infinities. It is an easy application of Theorem \ref{thm:inflarge} that, if $G=\infty$, then $G\vee H=\infty$ and $G\wedge H=H$ regardless of $H$, and if $G=\overline{\infty}$, then $G\vee H=G$ and $G\wedge H=\overline{\infty}$ regardless of $H$. In other words, $\top=\infty$ and $\bot=\overline{\infty}$. Hence, it is enough to consider $G,H\in\mathbb{A}_{n}\setminus\{\infty,\overline{\infty}\}$ and prove the facts stated in the theorem.

Firstly, let us prove that $G\vee H\geqslant G$ and $G\vee H\geqslant H$, where the join is \linebreak$G\vee H=\{\lfloor G \rfloor_n\cup\lfloor H \rfloor_n\,\|\,(\lceil G \rceil_n\cap\lceil H \rceil_n)\cup\{\infty\}\}$. We only need to prove that $G\vee H\geqslant G=\{\GL\,|\,\GR\}$, since the proof for $G\vee H\geqslant H$ is analogous. The sets $\GL$ and $\GR$ are chosen in such a way that their elements have a birthday less than or equal to $n$. If Left, playing first, wins $G+X$ with some $G^L+X$, since $G^L\in\lfloor G \rfloor\subseteq \lfloor G \rfloor\cup\lfloor H \rfloor$, Left also wins $G\vee H+X$ by moving to $G^L+X$. If Left, playing first, wins $G+X$ with some $G+X^L$, then, by induction, Left also wins $G\vee H+X$ with $G\vee H+X^L$.  On the other hand, if Right, playing first, wins $G\vee H+X$ with some $(G\vee H)^R+X$, due to $(G\vee H)^R\in \lceil G \rceil_n$, $(G\vee H)^R$ is a Right option of some $G'$, equivalent to $G$. Thus, Right also wins $G'+X$ with the move $(G\vee H)^R+X$. Given the equivalence of $G'$ and $G$, Right, playing first, must also win $G+X$. If Right, playing first, wins $G\vee H+X$ with some $G\vee H+X^R$, then, by induction, Right also wins $G\vee H+X$ with $G\vee H+X^R$.

Secondly, for $W\in\mathbb{G}_{n}$, let us prove that if $W\geqslant G$ and $W\geqslant H$, then $W\geqslant G\vee H$. Since it is a very straightforward fact to justify if $W$ is $\infty$ or $\overline{\infty}$, let us assume that this is not the case.
Let us start by proving that any $W^R\in W^\mathcal{R}$ is an element of $\lceil G \rceil_n\cap\lceil H \rceil_n$. To do so, it is sufficient to prove that any element $W^R$ is an element of $\lceil G \rceil_n$, as doing the same in relation to $\lceil H \rceil_n$ is similar.

If $G=\{\GL\,|\,\GR\}$, then $G=\{\GL\,|\,\GR\cup\{W^R\}\}$. This happens because, on one hand, according to Theorem \ref{thm:greediness}, $\{\GL\,|\,\GR\}\geqslant \{\GL\,|\,\GR\cup\{W^R\}\}$. On the other hand, $\{\GL\,|\,\GR\cup\{W^R\}\}\geqslant \{\GL\,|\,\GR\}$ is a consequence that if $W^R+X$ is a winning move for Right, given $W\geqslant G$, there must also be a winning move for Right in $G+X$.
Once $\{\GL\,|\,\GR\cup\{W^R\}\}$ is a possible form of $G$, by Definition \ref{def:joinmeet}, $W^R\in\lceil G \rceil_n$.

Now that we have established that $W^R\in\lceil G \rceil_n\cap\lceil H \rceil_n$, we prove that $W\geqslant G\vee H$. If Left, playing first, wins $G\vee H+X$ with some $(G\vee H)^L+X$, then that move is a winning move in $G+X$ or in $H+X$, given that $(G\vee H)^\mathcal{L}=\lfloor G \rfloor_n\cup\lfloor H \rfloor_n$. Since $W\geqslant G$ and $W\geqslant H$, Left must also have a winning move in $W+X$. If Left, playing first, wins $G\vee H+X$ with some $G\vee H+X^L$, then, by induction, Left also wins $W+X$ with $W+X^L$. If Right, playing first, wins $W+X$ with some $W^R+X$, due to the fact $W^R\in\lceil G \rceil_n\cap\lceil H \rceil_n$, Right also wins $ G\vee H+X$ by moving to $W^R+X$. If Right, playing first, wins $W+X$ with some $W+X^R$, then, by induction, Right also wins $G\vee H+X$ with $G\vee H+X^R$.

The same type of reasoning applies to $G\wedge H$.
\end{proof}

We conclude this section with some examples. With two atoms instead of one, the lattice structure of the affine games born by day zero has more elements than that of classical theory (Figure \ref{fig13}).

\begin{figure}[!htb]
\begin{center}
\scalebox{1}{
\begin{tikzpicture}[thick, scale=1.5]
  \node (top) at (0,4) {$\infty$};
  \node (a) at (0,3) {$\infty_1$};
  \node (b) at (-1,2) {$\bm{\mathsf{0}}$};
  \node (c) at (1,2) {$\pm \infty$};
  \node (d) at (0,1) {$\overline{\infty}_1$};
  \node (bot) at (0,0) {$\overline{\infty}$};
  \draw[-] (top) -- (a);
  \draw[-] (a) -- (b);
  \draw[-] (a) -- (c);
  \draw[-] (b) -- (d);
  \draw[-] (c) -- (d);
  \draw[-] (d) -- (bot);
\end{tikzpicture}}
\end{center}

\vspace{-0.4cm}
\caption{The lattice structure of the poset $(\mathbb{G}_{0},\geqslant)$.}
\label{fig13}
\end{figure}
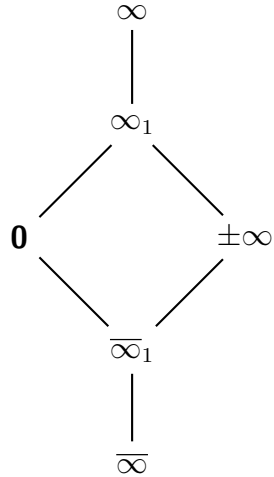

Figure \ref{fig14} shows the lattice structure of the affine games born by day one. The bold represents the substructure of Conway games born by day one, which is order embedded. Figure \ref{fig15} shows an {\sc atari go} position for each game value in $\mathbb{G}_{1}$.

\begin{figure}[!htb]
\begin{center}
\scalebox{1}{
\includegraphics[scale=1]{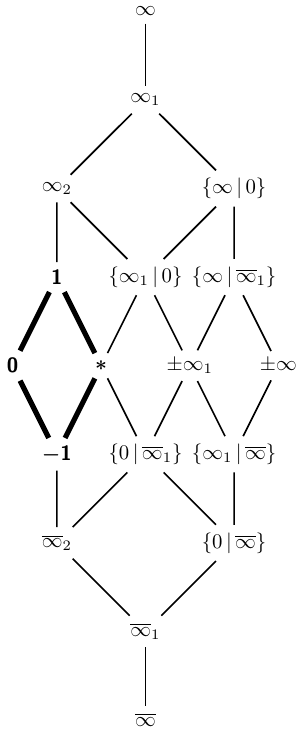}}
\end{center}

\vspace{-0.4cm}
\caption{The lattice structure of the poset $(\mathbb{G}_{1},\geqslant)$.}
\label{fig14}
\end{figure}

It is also pertinent to observe that, although this does not happen in $\Npc$, in $\Np$ a restriction can alter the game equivalence. To clarify what is meant by this, consider the games $\moon\!\!=\{0,\circlearrowright^{0}\,|\,0,\circlearrowleft^{0}\}$ and $\{*,\circlearrowright^{*}\,|\,*,\circlearrowleft^{*}\}$. These two games are not equivalent since, if $X=\{0\,|\,-1\}$, then Left, playing first, wins $\moon\!\!+X$ but loses $\{*,\circlearrowright^{*}\,|\,*,\circlearrowleft^{*}\}+X$.

On the other hand, we can define an affine game form $G$ as \emph{symmetric} if it is different from $\infty$ and $\overline{\infty}$, and $\GR=-\GL$. With the aid of the concept of symmetry, we can naturally define that $G\in\Np$ is \emph{impartial} if it is symmetric and all its quiet followers are symmetric. The class of impartial games is denoted by $\mathbb{I}$. Following the establishment of these concepts, the game equivalence \emph{modulo} $\mathbb{I}$, denoted by $=_{\mathbb{I}}$, is defined exactly the same way as in Definition \ref{def:orderequivalence}, with the difference that the distinguishing games $X$ have to be elements of $\mathbb{I}$. In \cite{Lar21,Lar23}, it was shown that the game values of this restricted structure are the well-known nimbers plus an extra value, which is $\moon$. In this restriction, we already have $\moon\!\!=_{\mathbb{I}}\{*,\circlearrowright^{*}\,|\,*,\circlearrowleft^{*}\}$. In fact, we even have $\moon\!\!=_{\mathbb{I}}\pm\infty$. In other words, the distinction between $\moon$ and $\{*,\circlearrowright^{*}\,|\,*,\circlearrowleft^{*}\}$ cannot be made with impartial games, and that is the reason why these games are equivalent modulo $\mathbb{I}$ but not equivalent in $\Np$. This also explains why $\moon\!\!$ was included in Figure \ref{fig12}.

\begin{figure}[!htb]
\begin{center}
\begin{tabular}{|c|c|c|}
  \hline
    && \\
  \raisebox{0.2cm}{$\infty$} & \raisebox{0.2cm}{$\infty_1$} & \raisebox{0.2cm}{$\infty_2$} \\
  \includegraphics[scale=1]{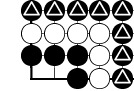} &
  \includegraphics[scale=1]{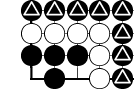} &
 \includegraphics[scale=1]{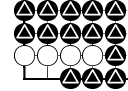}\\
    && \\
  \hline
    && \\
  \raisebox{0.2cm}{$\{\infty\,|\,0\}$} & \raisebox{0.2cm}{$1$} & \raisebox{0.2cm}{$\{\infty_1\,|\,0\}$} \\
   \includegraphics[scale=1]{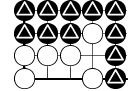} &
  \includegraphics[scale=1]{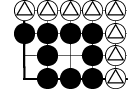} &
  \includegraphics[scale=1]{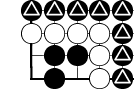}\\
    && \\
  \hline
    && \\
  \raisebox{0.2cm}{$\{\infty\,|\,\overline{\infty}_1\}$} & \raisebox{0.2cm}{$0$} & \raisebox{0.2cm}{$*$} \\
  \includegraphics[scale=1]{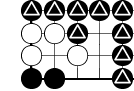} &
  \includegraphics[scale=1]{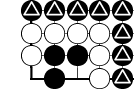} &
  \includegraphics[scale=1]{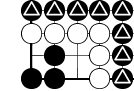}\\
    && \\
  \hline
   && \\
  \raisebox{0.2cm}{$\pm\infty_1$} & \raisebox{0.2cm}{$\pm\infty$} & \\
  \includegraphics[scale=1]{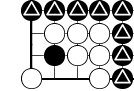} &
  \includegraphics[scale=1]{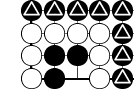} &
  \\
    && \\
  \hline
\end{tabular}
\end{center}

\vspace{-0.4cm}
\caption{Materialization of $\mathbb{G}_{1}$ through {\sc atari go} positions.}
\label{fig15}
\end{figure}

\section{Analysis of an example taken from game practice}
\label{sec:example}

\begin{figure}[!htb]
\begin{center}
\includegraphics[scale=1]{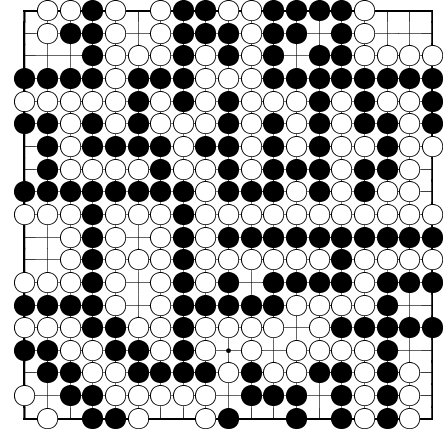}
\end{center}

\vspace{-0.4cm}
\caption{What is the outcome of this game?}
\label{fig16}
\end{figure}

The previous sections established the structure $\Np$, along with its properties. In this section, we discuss its application to game practice. Analysing combinatorial games is important not only for the games themselves, but also for the paths it opens up. For example, recent projects carried out by DeepMind, such as {\sc AlphaGo} and {\sc AlphaZero}, aimed to improve AI capabilities in playing combinatorial games. However, the goal was not only to analyze specific games, but also to push AI and mathematics, promoting algorithms capable of dealing with important real-world problems.

Nevertheless, one should never forget that games are in themselves a primary application of CGT. At this very moment as the reader is reading this sentence, \emph{hundreds of millions of people} are playing games, many of them making gaming their professional activity. Games constitute a primordial activity as ceremonial burial, arts, education, and so on, playing a role in human development \citep{Hui49}. The reason why games are often not considered ``applications'' lies in two stereotypes: sometimes they are associated with addiction (particulary games of chance), other times they are associated with pleasure. One might think that addiction and pleasure should not be considered applications of mathematics. However, there is no reason why they should not be.

We have chosen {\sc atari go} for this final section because this game, due to its characteristics, is very suitable for the purpose of showing the pertinence of affine normal play theory. However, that is not the only reason. This simplified version of {\sc go} has been used in various countries due to its educational and therapeutic value, illustrating that playing is much more than addiction or pleasure. Concerned about various types of problems in Japanese schools, Yasuda Yasutoshi (9 dan\footnote{The level of {\sc go} master players is described through the dan rank.}) has argued that the use of {\sc atari go} has measurable positive effects \citep{Yas21}.

Without further delay, let us delve into the analysis of a carefully chosen example. We propose an endgame reached in a game with many mistakes made by both players. In the position shown in Figure \ref{fig16}, it is easy to verify that Right wins when playing first. The interesting question is whether Left wins playing first, i.e., determining if the outcome class of the game is $\mathcal{N}$ or $\mathcal{R}$. As usual in CGT, the first step is to divide and conquer.

In Figure \ref{fig17}, moves on intersections marked with ``s'' are suicidal moves and should not be played by either player. Moves on intersections marked with ``t'' are also terrible, as a response in ``m'' leads to a catastrophe. Finally, moves on intersections marked with ``h'' are horrible, weakening the black group. There is a large set of alive pieces.\\

\begin{figure}[!htb]
\begin{center}
\includegraphics[scale=1]{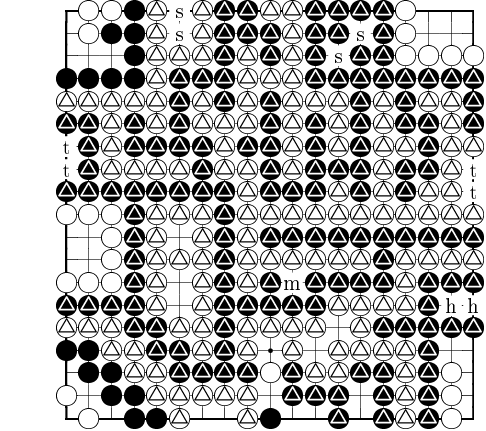}
\end{center}

\vspace{-0.4cm}
\caption{Alive pieces.}
\label{fig17}
\end{figure}

Given the set of alive pieces, it is possible to identify eight disjoint components (Figure~\ref{fig18}). The next step is to determine the game value of each of them.
\clearpage
\begin{figure}[!htb]
\hspace{1.7cm}
\scalebox{0.78}{
\includegraphics[scale=1]{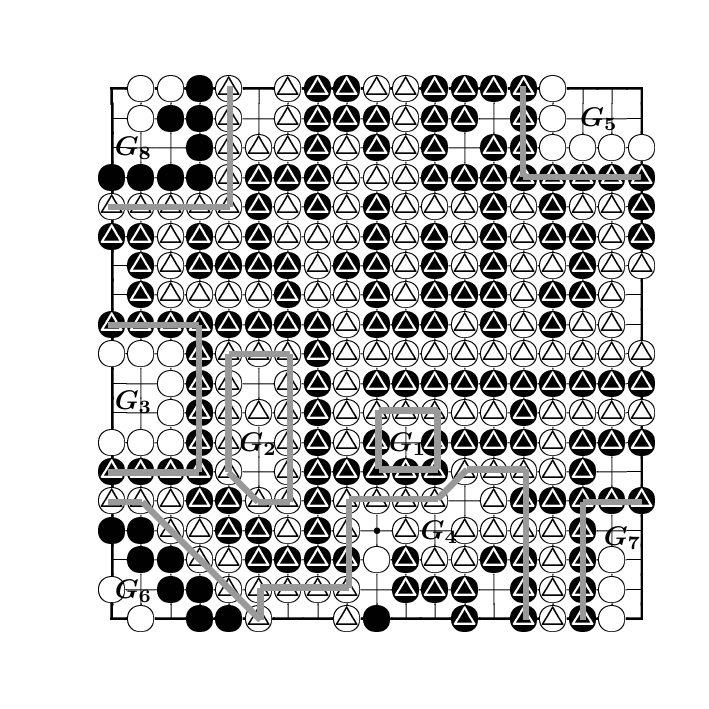}}

\vspace{-0.9cm}
\caption{Disjoint components.}
\label{fig18}
\end{figure}

Starting with the easiest components, $G_1$ is equal to $*$, since either player can place a stone at that intersection. The component $G_2$ is equal to $-1$, as Left cannot play in that region, and Right can make a last move there. Finally, $G_3$ is equal to $0$ because it is a $\mathcal{P}$-position. Left cannot play inside that region, and any move by Right can be answered with a symmetric move diagonally. In this way, these three components are Conway games, with the first one being tepid (infinitesimal) and the second and third being cold (numbers).

Advancing to the analysis of $G_4$, a move at the intersection marked with an ``m'' in Figure \ref{fig19} is mandatory for either player, dominating all others (for ease of drawing, we slightly adjusted the positions of the surrounding pieces without changing the nature of the component). Regardless of whether Left or Right makes the move, it is a check. It is reasonably straightforward to see that $G_4=\{\{\infty\,|\,2\}\,|\,\{-2\,|\,\overline{\infty}\}\}=\{\circlearrowright^{2}\,|\,\circlearrowright^{-2}\}$. Since $2>G_4>-2$, this component is a hammerzug. As Left is not in check in any other component, Theorem \ref{thm:extremeham} ensures  that the first move can be made in this component without having to think about anything else (collecting two guaranteed moves).\\

\begin{figure}[!htb]
\begin{center}
\includegraphics[scale=1]{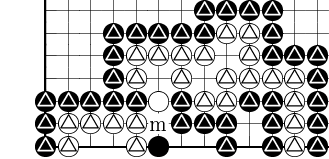}
\end{center}

\vspace{-0.4cm}
\caption{The component $G_4$ is a hammerzug.}
\label{fig19}
\end{figure}

The component $G_5$ does not have a clear analysis as the previous ones. Consider Figure \ref{fig20}. If Right starts and places a stone at intersection ``a'', he wins $3$ guaranteed moves. We leave it to the reader to conclude that if Left starts and places a stone at ``a'', then the resulting game is $\{\{\infty_1\,|\,*\}\,|\,\{*\,|\,\overline{\infty}_1\}\}$, and if she places a stone at ``b'', then the resulting game is $\{\{\infty_2\,|\,*\}\,|\,\{*\,|\,\overline{\infty}_1\}\}$.
Both games can be reduced and are equal to $*$. Thus, $G_5=\{*\,|\,-3\}$, and we have again a Conway game, but this time it is hot.

\begin{figure}[!htb]
\begin{center}
\includegraphics[scale=1]{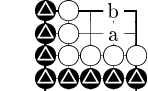}
\end{center}

\vspace{-0.4cm}
\caption{The component $G_5$ is equal to $\{*\,|\,-3\}$.}
\label{fig20}
\end{figure}

The component $G_6$ is very interesting. Both players are in a rush  to place a stone at the intersection marked with an ``m'' in Figure \ref{fig21}. If Right does it, he ensures a winning terminating move in that component. If Left does it, she manages to defend, albeit at the cost of giving one guaranteed move to the opponent. Therefore, we have $G_6=\{-1\,|\,\overline{\infty}_2\}$. This negative component is scalding.

\begin{figure}[!htb]
\begin{center}
\includegraphics[scale=1]{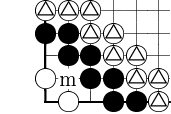}
\end{center}

\vspace{-0.4cm}
\caption{The negative component $G_6$ is scalding.}
\label{fig21}
\end{figure}

The component $G_7$ is also scalding, but it is Left who is threatening to cause a bubbling situation. If Left places a stone at the intersection marked with an ``a'' in Figure \ref{fig22}, she guarantees a winning terminating option in $3$ moves. If Right plays first, placing a stone at ``a'', ``b'' or ``c'' results in a $\mathcal{P}$-position, so Right has a move to $0$. Hence, we have $G_7=\{\infty_3\,|\,0\}$.

\begin{figure}[!htb]
\begin{center}
\includegraphics[scale=1]{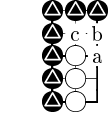}
\end{center}

\vspace{-0.4cm}
\caption{The fuzzy component $G_7$ is scalding.}
\label{fig22}
\end{figure}

The component $G_8$ has a very interesting peculiarity. If Right places a stone at the intersection marked with an ``a'' in Figure \ref{fig23}, he guarantees a winning terminating option in $2$ moves. What is different from the previous cases is that Left has \emph{two distinct ways to defend}. If Left places a stone at the intersection ''b'', then she threatens a move that ensures the victory after one move. Against that move, Right must place a stone at ``a'', also threatening to win. To defend, Left places a stone at ``c'', achieving a $\mathcal{P}$-position, i.e., $0$. It follows that placing a black stone at ''b'' is the option $\{\infty_1\,|\,\{0\,|\,\overline{\infty}_1\}\}$. On the other hand, if the first move is the placement of a black stone at ``a'', Right must defend by placing a stone at ``b'', achieving a $\mathcal{P}$-position, i.e., $0$. Thus, this second option for Left is $\{\infty_1\,|\,0\}$.
We have $G_8=\{\{\infty_1\,|\,\{0\,|\,\overline{\infty}_1\}\},\{\infty_1\,|\,0\}\,|\,\overline{\infty}_2\}$, that can be reduced to $G_8=\{0,\{\infty_1\,|\,0\}\,|\,\overline{\infty}_2\}$. If $G_8$ were the only component in the disjunctive sum, the first option (placement at ``b'') would be the winning move for Left. With more components, the second option defends \emph{in sente},\footnote{From Japanese ``before hand'', playing \emph{sente} (\begin{CJK*}{UTF8}{gbsn}先手\end{CJK*}) means to keep the initiative.} making it a potentially good choice.

\begin{figure}[!htb]
\begin{center}
\includegraphics[scale=1]{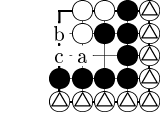}
\end{center}

\vspace{-0.4cm}
\caption{The component $G_8$ is scalding and Left can defend in sente.}
\label{fig23}
\end{figure}

Given the analysis of the components, the position shown in Figure \ref{fig16} is the following disjunctive sum: $$G=*+(-1)+0+\{\circlearrowright^{2}\,|\,\circlearrowright^{-2}\}+\{*\,|\,-3\}+\{-1\,|\,\overline{\infty}_2\}+\{\infty_3\,|\,0\}+\{0,\{\infty_1\,|\,0\}\,|\,\overline{\infty}_2\}.$$

\begin{figure}[!htb]
\begin{center}
\includegraphics[scale=1]{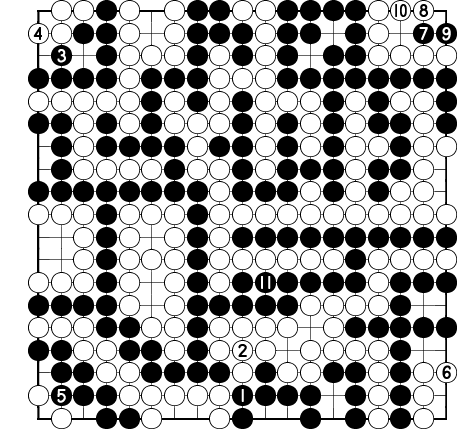}
\end{center}

\vspace{-0.4cm}
\caption{Left, playing first, wins.}
\label{fig24}
\end{figure}

The presence of multiple scalding components, as well as a hammerzug, leads to the conjecture that there must have been several previous errors. Typically, when a scalding component comes into play, it cools down in the next move as the threatened player attempts to defend. However, this is not one hundred percent certain, as the best defense can be the attack, which might be executed with a ``faster'' threat in another component. Returning to the game $G$, after the moves in the fourth and eighth components (in this order), Right has to respond, and the following sum is obtained (it is worth noting that Left employed the defense in sente in the eighth component):
$$G=*+(-1)+0+2+\{*\,|\,-3\}+\{-1\,|\,\overline{\infty}_2\}+\{\infty_3\,|\,0\}+0.$$
At that point, Left defends in the component $\{-1\,|\,\overline{\infty}_2\}$, obtaining the following sum:
$$G=*+(-1)+0+2+\{*\,|\,-3\}+(-1)+\{\infty_3\,|\,0\}+0.$$
Now that Right is under threat in the scalding component $\{\infty_3\,|\,0\}$, he has no time to play in the hot Conway game and must respond with
$$G=*+(-1)+0+2+\{*\,|\,-3\}+(-1)+0+0.$$
Left finishes gloriously by playing to
$$G=*+(-1)+0+2+*+(-1)+0+0.$$
Note that if the empty intersection concerning $G_1$ did not exist, Left would not be able to win. Figure \ref{fig24} shows the exact winning sequence.

\section*{Future work}
\label{sec:future}

With the algebraic structure $\Np$ established, it is important to illustrate its applicability through the analysis of concrete rulesets. A complete analysis of {\sc whackenbush} may be an interesting step in that direction.

From a theoretical point of view, it is also relevant to investigate how to extend atomic weight theory and temperature theory to encompass affine normal play games.

\end{document}